\numberwithin{equation}{section}
\newtheorem{theorem}{Theorem}[section]
\newtheorem{lemma}[theorem]{Lemma}
\newtheorem{corollary}[theorem]{Corollary}
\newtheorem{proposition}[theorem]{Proposition}
\newtheorem{remark}[theorem]{Remark}
\newtheorem{definition}[theorem]{Definition}
\renewcommand{\epsilon}{\varepsilon}
\renewcommand{\rightarrow}{\to}
\newcommand{\ud}{\mathrm{d}}
\title[Supercritical k-Hessian inequality]{Extremal functions for a supercritical k-Hessian inequality of Sobolev-type}
\author[J.F. de Oliveira]{Jos\'{e} Francisco de Oliveira}
\address[J.F de Oliveira]{\newline\indent Department of Mathematics
\newline\indent 
Federal University of Piau\'{\i}
\newline\indent
 64049-550 Teresina, PI, Brazil}
\email{\href{mailto:jfoliveira@ufpi.edu.br}{jfoliveira@ufpi.edu.br}}
\author[P. Ubilla]{Pedro Ubilla}
\address[P. Ubilla]{\newline\indent Departamento de Matematica
\newline\indent 
Universidad de Santiago de Chile
\newline\indent
Casilla 307, Correo 2, Santiago, Chile}
\email{\href{mailto:pedro.ubilla@usach.cl}{pedro.ubilla@usach.cl}}
\thanks{The second author was partially supported by FONDECYT 1181125, 1161635 and 1171691.}
\subjclass[2000]{35J20, 35J60, 35J65, 35J50}
\keywords{$k$-Hessian; Supercritical growth; Extremal Function; Elliptic equations}
\begin{document}
\begin{abstract}
Our main purpose in this paper is to investigate a supercritical Sobolev-type inequality for the $k$-Hessian operator acting on $\Phi^{k}_{0,\mathrm{rad}}(B)$, the space of radially symmetric $k$-admissible functions on the unit ball $B\subset\mathbb{R}^{N}$. We also prove both the existence of admissible extremal functions for the associated variational problem and the solvability of a related $k$-Hessian equation with supercritical growth. 
\end{abstract}
\maketitle
\section{Introduction}
\label{section1}
It is well known that Sobolev-type inequalities and the corresponding variational problems play an important role in many branches of mathematics such as analysis, partial differential equations, geometric analysis, and calculus of variations.
The classical Sobolev inequality provides an optimal embedding from the Sobolev space $H^{1}(\Omega)$ into the Lebesgue spaces $L^{p}(\Omega)$ with $p\le 2^{*}=2N/(N-2)$, where $\Omega\subset\mathbb{R}^{N}$, with $N\ge 3$ is a bounded smooth domain. By restricting to the Sobolev space of radially symmetric functions about the origin $H^{1}_{0,\mathrm{rad}}(B)$, where $B$ is the unit ball in $\mathbb{R}^{N}$,  J.M. do \'{O}, B. Ruf, and P. Ubilla in \cite{MR3514752} were able to prove a variant of the Sobolev inequality giving an embedding into 
non-rearrangement invariant spaces $L_{p(x)}(B)$, the variable
exponent Lebesgue spaces, which goes beyond the critical exponent $2^{*}$. Namely, it was proven that
\begin{equation}\label{primary}
\mathcal{U}_{N,\alpha}=\sup\left\{\int_{B}|u|^{2^{*}+|x|^{\alpha}}\mathrm{d} x\; |\; u\in H^{1}_{0,\mathrm{rad}}(B),\; \|\nabla u\|_{L^2(B)}=1 \right\}<\infty
\end{equation}
for all $\alpha>0$. In addition, the supremum in \eqref{primary} is 
attained under the condition
\begin{equation}\label{alphaDoRUUb}
 0<\alpha<\min\left\{N/2, N-2\right\}.
 \end{equation}
As an application, the authors were also able to prove that the following supercritical elliptic equation
\begin{equation}\label{primary-problem}
\left\{\begin{aligned}
&\left.\begin{aligned}
&-\Delta u=|u|^{2^{*}+|x|^{\alpha}-2}u,\\
\end{aligned}\right. &\mbox{in}&\;\; B\\
&\;u=0,&\mbox{on}&\;\;\partial B
\end{aligned}\right.
\end{equation}
admits at least one positive solution for all $0<\alpha<\min\left\{N/2, N-2\right\}$. This, is somewhat surprising, for the nonlinearity term has strictly supercritical growth except in the origin. Based on these results,  the authors in \cite{Cao} were able to show the existence of infinitely many nodal solutions for problem \eqref{primary-problem}.  The inequality \eqref{primary} and its applications have currently captured attention. In recent work \cite{Ngu}, Q.A. Ng\^{o} and V.H. Nguyen proved that the inequality \eqref{primary} and its extremal problem can be extended for higher order Sobolev spaces $H^{m}_{0,\mathrm{rad}}(B), m\ge 1$, while in \cite{DCDS2019}  suitable extension including $W^{1,p}_{0,\mathrm{rad}}(B),p\ge 2$ has been done, motivated by the classical Hardy inequality \cite{Hardy1920}. For more results related to this class of problems, the reader can see \cite{JDE2019, CLDOUN,CZW}.

Now, let us introduce $F_k$,  $1\le k\le N$  be the $k$-Hessian operator defined by  
\begin{equation}\nonumber
F_k[u]=\sum_{1\le i_1<\dots<i_k\le N}\lambda_{i_1}\dots\lambda_{i_k},
\end{equation}
where $\lambda=(\lambda_1, \dots, \lambda_N)$ are the eigenvalues of the real symmetric Hessian matrix $D^{2}u$ of a function $u\in C^{2}(\Omega)$. Alternatively,
$F_k[u]$ is the sum of all $k \times k$ principal minors of the Hessian matrix $D^{2}u$, which coincides with the Laplacian $F_1[u]=\Delta u$  if $k=1$ and the Monge-Amp\`{e}re operator $F_{N}[u]=\det(D^2u)$ if $k=N$.

The main purpose in this work is to provide a version of the inequality \eqref{primary} for the $k$-Hessian operator $(k\ge 1)$ and to analyze the existence of optimizer for related extremal problem. 

Before introducing our results, let us  briefly explain the range and ingredients presented in the paper. For $k=1,\dots, N$, the $k$-Hessian operators have a divergence structure (see for instance \cite{MR1051232}), but their variational structure is not compatible with any classical Sobolev space $W^{m, p}_{0}(\Omega),m\ge 1, p\ge 2$ in the fully nonlinear range $k=2,\dots, N$, see for instance \cite{Wang2}. In addition, operators $F_k$, $k=2,\dots, N$ are not elliptic on whole space $C^{2}(\Omega)$. In fact, we must consider the $k$-admissible function space  $\Phi^{k}_0(\Omega)$ proposed by L. Caffarelli, L. Nirenberg, and J. Spruck in the pioneer work  \cite{MR3487276}, which is the subspace of the functions $u\in C^{2}(\Omega)$ vanishing on $\partial\Omega$ such that $F_{j}[u]\ge 0,\; j=1,\dots,k$. Although the $F_k$,  $k=2,\dots, N$ are elliptic when restricted to $\Phi^{k}_0(\Omega)$, we must now prove that our maximizer is a smooth function. We also observe that we cannot use the variational theory directly on $\Phi^{k}_0(\Omega)$ because we don't know the behavior of a functional $I:\Phi^{k}_0(\Omega)\rightarrow\mathbb{R}$ near of the boundary. To overcome this, a descent gradient flow of $I$ has been employed for some authors, see for instance \cite{TianWang,Wang2}. In this work, by using a Hardy-type inequality \cite{Opic}, we are able to carry out the analysis in a suitable weighted Sobolev space which was already employed in \cite{MR1051232,MZ2020,JDE2019}, see Section~\ref{sec2} below. Note that the space $\Phi^{k}_0(\Omega)$ has been used by several authors to study the $k$-Hessian equation. For instance, existence, multiplicity, uniqueness, and asymptotic behavior of radially symmetric $k$-admissible solutions of the $k$-Hessian equation were investigated in \cite{Dai, Wei,Wei2,Justino-Vicente,MR1400797,MR1051232}; while details on space $\Phi^{k}_0(\Omega)$ and more general results can be found in \cite{MR3487276,Wang2,Labutin,ChouWang,MR1368245, TianWang} and references therein. 

 As observed in \cite{Wang2}, the expression
\begin{equation}\label{admissible-norm}
\|u\|_{\Phi_0^k}= \left(\int_{\Omega}(-u)F_{k}[u]\ud x\right)^{\frac{1}{k+1}}, \;\; u\in \Phi^{k}_{0}(\Omega
)
\end{equation}
defines a norm on  $\Phi^{k}_{0}(\Omega
)$. In addition, the following Sobolev type inequality holds: there exists a constant $C = C(N,k,p,\Omega)$ such that
\begin{equation}\label{S-inequality}
\|u\|_{L^{p}(\Omega)} \le C \, \|u\|_{\Phi_0^k}, \forall \ u \in \Phi_0^k(\Omega)
\end{equation}
for any $1\le p\le k^*$, where 
$$
k^* = \frac{N(k+1)}{N-2k},\;\;\;  1\le k<\frac{N}{2}
$$ 
is the optimal exponent of the inequality \eqref{S-inequality}.

Our first main result reads as follows:
\begin{theorem}\label{thm1} Let $\alpha>0$ be real number and assume $1\le k< N/2$. Then
\begin{equation}\label{mainS}
\sup \left\{\int_{B}|u(x)|^{k^{*}+|x|^{\alpha}}\ud x\;\;|\;\; u\in\Phi^{k}_{0,\mathrm{rad}}(B)\;,\; \|u\|_{\Phi^{k}_{0}}=1\right\}<\infty,
\end{equation}
where $\Phi^{k}_{0,\mathrm{rad}}(B)$ is the subspace of radially symmetric functions in $\Phi^{k}_0(B)$.
\end{theorem}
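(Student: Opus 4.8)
The plan is to reduce \eqref{mainS} to a one-dimensional weighted energy estimate, extract the sharp radial decay rate of admissible functions, and then exploit that the exponent perturbation $|x|^{\alpha}$ vanishes at the origin precisely where $u$ is allowed to be singular. The whole scheme mirrors the argument of do \'O--Ruf--Ubilla for the Laplacian, with $2$ replaced by $k+1$.

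\emph{Radial normal form of the norm (the main obstacle).} For $u=u(r)\in\Phi^{k}_{0,\mathrm{rad}}(B)$ the Hessian $D^{2}u$ has eigenvalues $u''(r)$ (simple) and $u'(r)/r$ (with multiplicity $N-1$), so
\[
F_{k}[u]=\binom{N-1}{k}\Big(\tfrac{u'}{r}\Big)^{k}+\binom{N-1}{k-1}\,u''\Big(\tfrac{u'}{r}\Big)^{k-1}.
\]
Since $F_{1}[u]=\Delta u\ge 0$ makes $r^{N-1}u'$ nondecreasing, one gets $u'\ge 0$ and $u\le 0$ on $B$. I would then write $\int_{B}(-u)F_{k}[u]\,\ud x=\omega_{N-1}\int_{0}^{1}(-u)F_{k}[u]\,r^{N-1}\,\ud r$, use $u''(u')^{k-1}=\frac1k\frac{\ud}{\ud r}[(u')^{k}]$, integrate by parts, and invoke the identity $\binom{N-1}{k}=\frac{N-k}{k}\binom{N-1}{k-1}$ to cancel every term still containing the factor $(-u)$, arriving at
\[
\|u\|_{\Phi^{k}_{0}}^{k+1}=c_{N,k}\int_{0}^{1}|u'(r)|^{k+1}\,r^{N-k}\,\ud r,\qquad c_{N,k}=\frac{\omega_{N-1}}{k}\binom{N-1}{k-1}.
\]
The delicate points here are that the boundary terms genuinely vanish (they do, since $u(1)=0$ and $u'(0)=0$) and that the combinatorial identity makes the cancellation exact; everything after this normal form is essentially routine.

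\emph{Borderline pointwise decay.} From $u(r)=-\int_{r}^{1}u'(\rho)\,\ud\rho$ and H\"older's inequality with exponents $k+1$ and $(k+1)/k$ against the weight $\rho^{N-k}$,
\[
|u(r)|\le\Big(\int_{r}^{1}|u'|^{k+1}\rho^{N-k}\,\ud\rho\Big)^{\frac{1}{k+1}}\Big(\int_{r}^{1}\rho^{-\frac{N-k}{k}}\,\ud\rho\Big)^{\frac{k}{k+1}}.
\]
Because $k<N/2$ we have $(N-k)/k>1$, so the last factor is $\le C\,r^{-(N-2k)/(k+1)}$, and together with the normal form this gives
\[
|u(r)|\le C_{0}\,\|u\|_{\Phi^{k}_{0}}\,r^{-\beta},\qquad \beta:=\frac{N-2k}{k+1},\quad C_{0}=C_{0}(N,k);
\]
note that $\beta k^{*}=N$, so this decay is exactly critical for the embedding $\Phi^{k}_{0}\hookrightarrow L^{k^{*}}$, which is why the problem is genuinely supercritical away from the origin.

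\emph{Absorbing the supercritical perturbation.} Fix $u$ with $\|u\|_{\Phi^{k}_{0}}=1$. If $|u(x)|\le1$ then $|u(x)|^{|x|^{\alpha}}\le1$ since $|x|^{\alpha}>0$; if $|u(x)|>1$ then, taking logarithms and using the decay bound together with $|x|^{\alpha}\le1$ on $B$,
\[
\log\big(|u(x)|^{|x|^{\alpha}}\big)\le|x|^{\alpha}\log C_{0}+\beta\,|x|^{\alpha}\log\tfrac{1}{|x|}\le|\log C_{0}|+\beta\sup_{0<t<1}t^{\alpha}\log\tfrac1t=:C_{1},
\]
which is finite for every $\alpha>0$ because $t\mapsto t^{\alpha}\log(1/t)$ is bounded on $(0,1)$. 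Hence $|u|^{|x|^{\alpha}}\le e^{C_{1}}$ throughout $B$, uniformly over the whole admissible class, and combining with the Sobolev inequality \eqref{S-inequality} at the optimal exponent $p=k^{*}$,
\[
\int_{B}|u|^{k^{*}+|x|^{\alpha}}\,\ud x=\int_{B}|u|^{k^{*}}\,|u|^{|x|^{\alpha}}\,\ud x\le e^{C_{1}}\int_{B}|u|^{k^{*}}\,\ud x\le e^{C_{1}}C^{k^{*}},
\]
which is the asserted uniform bound; in particular no decomposition of $B$ into a small ball and its complement is needed.
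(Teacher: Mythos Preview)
Your proof is correct and follows essentially the same strategy as the paper: reduce to the radial norm identity $\|u\|_{\Phi^{k}_{0}}^{k+1}=c_{N,k}\int_{0}^{1}r^{N-k}|u'|^{k+1}\,\ud r$ (which the paper simply cites from Wang rather than rederiving), obtain the pointwise decay $|u(r)|\le C_{0}r^{-(N-2k)/(k+1)}$ via H\"older, and then exploit that $r^{\alpha}\log(1/r)$ is bounded on $(0,1)$ to control $|u|^{r^{\alpha}}$ uniformly. The only cosmetic difference is that the paper (Lemma~\ref{super-bounded}) splits the interval at a fixed radius $\rho$ chosen so that $|v|\le 1$ on $[\rho,1]$, whereas you split pointwise according to whether $|u|\le 1$ or $|u|>1$; your version is slightly cleaner and, as you note, avoids the explicit domain decomposition.
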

The above result represents the counterpart of \eqref{primary} to the fully nonlinear case $F_k,k\ge 2$ (recall $F_1[u]=\Delta u$).

In the same line of \cite{MR3514752}, one can see that Theorem~\ref{thm1} ensures the continuous embedding of the $k$-admissible function space $\Phi^{k}_{0,\mathrm{rad}}(B)$ into the variable exponent Lebesgue space $L_{k^*+|x|^{\alpha}}(B)$. Precisely,
\begin{corollary}\label{coro1} Let $1\le k<N/2$ and $\alpha>0$. Then the following embedding is continuous 
\begin{equation}\nonumber
\Phi^{k}_{0,\mathrm{rad}}(B)\hookrightarrow L_{k^*+|x|^{\alpha}}(B),
\end{equation}
where  $L_{k^*+|x|^{\alpha}}(B)$ denotes the variable exponent Lebesgue space defined by
\begin{equation}\nonumber
L_{k^*+|x|^{\alpha}}(B)=\left\{u:B\rightarrow\mathbb{R}\;\;\mbox{is mensurable}\;\; | \;\; \int_{B}|u(x)|^{k^*+|x|^{\alpha}}\ud x<+\infty\right\},
\end{equation}
with the norm
\begin{equation}\nonumber
\|u\|_{L_{k^{*}+|x|^{\alpha}}(B)}=\inf\left\{\lambda>0\;\; 
| \;\; \int_{B}\left|\frac{u(x)}{\lambda}\right|^{k^*+|x|^{\alpha}}\ud x\le 1 \right\}.
\end{equation}
\end{corollary}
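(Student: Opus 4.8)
The plan is to deduce the embedding directly from the uniform modular bound furnished by Theorem~\ref{thm1}, following the same reasoning used for the classical case in \cite{MR3514752}; the only point that needs care is converting that bound into control of the Luxemburg norm of $L_{k^{*}+|x|^{\alpha}}(B)$, the variable exponent being handled by an elementary monotonicity inequality.

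First I would set
\[
S:=\sup\left\{\int_{B}|u(x)|^{k^{*}+|x|^{\alpha}}\ud x\;\;\Big|\;\; u\in\Phi^{k}_{0,\mathrm{rad}}(B),\ \|u\|_{\Phi^{k}_{0}}=1\right\},
\]
which is finite by Theorem~\ref{thm1}, and replace it by $\max\{S,1\}$ so that $S\ge 1$. Given a nonzero $u\in\Phi^{k}_{0,\mathrm{rad}}(B)$, the normalized function $v:=u/\|u\|_{\Phi^{k}_{0}}$ has unit norm, hence $\int_{B}|v|^{k^{*}+|x|^{\alpha}}\ud x\le S$.

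Next I would use that $|x|^{\alpha}\ge 0$ on $B$, so $k^{*}+|x|^{\alpha}\ge k^{*}$ and therefore $t^{\,k^{*}+|x|^{\alpha}}\le t^{\,k^{*}}$ for all $t\in[0,1]$ and $x\in B$. Choosing $\lambda:=S^{1/k^{*}}\|u\|_{\Phi^{k}_{0}}$, one has $u/\lambda=\mu v$ with $\mu:=S^{-1/k^{*}}\le 1$, whence
\begin{align*}
\int_{B}\left|\frac{u(x)}{\lambda}\right|^{k^{*}+|x|^{\alpha}}\ud x
&=\int_{B}\mu^{\,k^{*}+|x|^{\alpha}}\,|v(x)|^{k^{*}+|x|^{\alpha}}\ud x\\
&\le \mu^{\,k^{*}}\int_{B}|v(x)|^{k^{*}+|x|^{\alpha}}\ud x\le \mu^{\,k^{*}}S=1.
\end{align*}
By definition of the Luxemburg norm this gives $\|u\|_{L_{k^{*}+|x|^{\alpha}}(B)}\le S^{1/k^{*}}\|u\|_{\Phi^{k}_{0}}$, and since $u=0$ is trivial, the embedding $\Phi^{k}_{0,\mathrm{rad}}(B)\hookrightarrow L_{k^{*}+|x|^{\alpha}}(B)$ is continuous with operator norm at most $S^{1/k^{*}}$.

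As for obstacles, there is essentially none of substance here: this is the standard mechanism turning a scaling-normalized integral estimate into a modular (Luxemburg) estimate. The only mild subtlety is the non-constancy of the exponent, which is absorbed once and for all by the inequality $t^{\,k^{*}+|x|^{\alpha}}\le t^{\,k^{*}}$ for $t\in[0,1]$; note also that $1<k^{*}\le k^{*}+|x|^{\alpha}\le k^{*}+1$ on $B$ and $|B|<\infty$, so $L_{k^{*}+|x|^{\alpha}}(B)$ is indeed a normed space and nothing further is required.
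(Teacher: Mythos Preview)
Your proof is correct and follows essentially the same route as the paper: both use the uniform modular bound from Theorem~\ref{thm1} on the normalized function $u/\|u\|_{\Phi^{k}_{0}}$, and then exploit the elementary inequality $\mu^{\,k^{*}+|x|^{\alpha}}\le \mu^{\,k^{*}}$ for $0<\mu\le 1$ (equivalently $(1/\lambda)^{k^{*}+|x|^{\alpha}}\le (1/\lambda)^{k^{*}}$ for $\lambda>1$) to pass to the Luxemburg norm. Your precaution of replacing $S$ by $\max\{S,1\}$ is a clean way to handle the case $S<1$, which the paper leaves implicit.
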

On the attainability, we can prove the following:
\begin{theorem}\label{thm2}
For $\alpha>0$ be real number and $1\le k< N/2$, we set
\begin{equation}\label{max-problem}
\mathcal{U}_{k, N,\alpha}=\sup \left\{\int_{B}|u(x)|^{k^{*}+|x|^{\alpha}}\ud x\;\;|\;\; u\in\Phi^{k}_{0,\mathrm{rad}}(B)\;,\; \|u\|_{\Phi^{k}_{0}}=1\right\}.
\end{equation}
Then, $\mathcal{U}_{k,N,\alpha}$ is attained provided that
$0<\alpha
<(N-2k)/k.$
\end{theorem}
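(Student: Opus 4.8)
The plan is to run the direct method for \eqref{max-problem} in the weighted Sobolev space of Section~\ref{sec2}, where for radial $u$ an integration by parts gives $\|u\|_{\Phi^k_0}^{k+1}=\int_B(-u)F_k[u]\ud x=c_{N,k}\int_0^1 r^{N-k}|u'(r)|^{k+1}\ud r$. Let $(u_j)\subset\Phi^k_{0,\mathrm{rad}}(B)$ be a maximizing sequence, $\|u_j\|_{\Phi^k_0}=1$ and $\int_B|u_j|^{k^*+|x|^\alpha}\ud x\to\mathcal U_{k,N,\alpha}$; passing to a subsequence, $u_j\rightharpoonup u_0$ weakly and $u_j\to u_0$ a.e. Two facts organize the argument. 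First, the elementary bound $|u(r)|\le C r^{-(N-2k)/(k+1)}$ for $\|u\|_{\Phi^k_0}\le1$ (apply H\"older to $u(r)=-\int_r^1 u'$) gives $u_j\to u_0$ in $C_{\mathrm{loc}}(\overline{B}\setminus\{0\})$ by Arzel\`a--Ascoli, and, since $r^\alpha\log(1/r)\to0$, it gives $|x|^\alpha\log|u_j(x)|\to0$ as $|x|\to0$ uniformly in $j$, hence $|u_j(x)|^{|x|^\alpha}\le M(\delta)$ on $B_\delta$ with $M(\delta)\to1$. Second, $k$-admissibility makes $r\mapsto r^{N-k}(u'_j(r))^k$ nondecreasing, so Helly's selection theorem (applied on each $[0,1-\eps]$, where the energy bound makes these functions uniformly bounded) gives $u'_j\to u'_0$ a.e.; consequently the Brezis--Lieb splitting $\|u_j\|_{\Phi^k_0}^{k+1}=\|u_0\|_{\Phi^k_0}^{k+1}+\|u_j-u_0\|^{k+1}+o(1)$ holds, as does the Brezis--Lieb lemma for the energy, $\int_B|u_j|^{k^*+|x|^\alpha}\ud x=\int_B|u_0|^{k^*+|x|^\alpha}\ud x+\int_B|u_j-u_0|^{k^*+|x|^\alpha}\ud x+o(1)$.

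Put $v_j=u_j-u_0$ and $a=\lim\|v_j\|^{k+1}=1-\|u_0\|_{\Phi^k_0}^{k+1}\in[0,1]$. Because $v_j\to0$ in $C_{\mathrm{loc}}(\overline{B}\setminus\{0\})$ and $|v_j(x)|^{|x|^\alpha}\le M(\delta)\to1$ near the origin, the critical case $p=k^*$ of \eqref{S-inequality} (best constant $\mathcal C_{N,k}$) yields $\lim_j\int_B|v_j|^{k^*+|x|^\alpha}\ud x\le\mathcal C_{N,k}\,a^{N/(N-2k)}$; and from $\|u_0\|_{\Phi^k_0}=(1-a)^{1/(k+1)}$ together with $k^*+|x|^\alpha\ge k^*$ one gets $\int_B|u_0|^{k^*+|x|^\alpha}\ud x\le(1-a)^{N/(N-2k)}\,\mathcal U_{k,N,\alpha}$. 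Inserting these in the Brezis--Lieb identity,
\[
\mathcal U_{k,N,\alpha}\le(1-a)^{N/(N-2k)}\,\mathcal U_{k,N,\alpha}+\mathcal C_{N,k}\,a^{N/(N-2k)}.
\]
Since $N/(N-2k)>1$, the map $t\mapsto1-(1-t)^{N/(N-2k)}$ is concave on $[0,1]$ with boundary values $0$ and $1$, hence dominates $t$, while $t^{N/(N-2k)}\le t$ there; the inequality above therefore forces $\mathcal U_{k,N,\alpha}\le\mathcal C_{N,k}$ as soon as $a>0$. Thus everything reduces to proving the strict inequality $\mathcal U_{k,N,\alpha}>\mathcal C_{N,k}$: granting it, $a=0$, so $\|u_0\|_{\Phi^k_0}=1$ and $\int_B|u_0|^{k^*+|x|^\alpha}\ud x=\mathcal U_{k,N,\alpha}$ by Brezis--Lieb.

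This strict inequality is where $0<\alpha<(N-2k)/k$ enters. I would test with the truncated and $\Phi^k_0$-normalized functions built from the extremals $U_\mu(x)=(\mu^2+|x|^2)^{-(N-2k)/(2k)}$ of the critical Hessian--Sobolev inequality on $\R^N$, $\mu\to0^+$, for which $\int_B|u_\mu|^{k^*}\ud x=\mathcal C_{N,k}-O(\mu^{(N-2k)/k})$ and $u_\mu(0)\asymp\mu^{-(N-2k)/(k+1)}$. Splitting $B$ into the core $\{|x|\le\mu\}$, the neck $\{\mu\le|x|\le\mu^{1/(k+1)}\}$, where $|u_\mu|\ge1$, and the remainder, and using $|u_\mu|^{|x|^\alpha}-1\asymp|x|^\alpha\log|u_\mu|>0$ on the first two (again via $|x|^\alpha\log|u_\mu|\to0$), one computes
\[
\int_B|u_\mu|^{k^*+|x|^\alpha}\ud x-\int_B|u_\mu|^{k^*}\ud x=\int_B|u_\mu|^{k^*}\bigl(|u_\mu|^{|x|^\alpha}-1\bigr)\ud x\ge c\,\mu^\alpha|\log\mu|-O(\mu^{(N+\alpha)/(k+1)})
\]
with $c>0$, the positive gain $\mu^\alpha|\log\mu|$ coming from the core and neck. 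Since $\alpha<(N-2k)/k<N/k$ gives $\mu^\alpha|\log\mu|\gg\mu^{(N-2k)/k}$ and $\mu^\alpha|\log\mu|\gg\mu^{(N+\alpha)/(k+1)}$, we obtain $\int_B|u_\mu|^{k^*+|x|^\alpha}\ud x>\mathcal C_{N,k}$ for small $\mu$, i.e. $\mathcal U_{k,N,\alpha}>\mathcal C_{N,k}$.

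Finally, one must check that the maximizer $u_0$ lies in $\Phi^k_{0,\mathrm{rad}}(B)$. The Euler--Lagrange equation of \eqref{max-problem} (using that $\partial_u\int_B(-u)F_k[u]\ud x=-(k+1)F_k[u]$ and $u_0\le0$) shows that $u_0$ solves the radial Dirichlet problem $F_k[u_0]=\lambda(k^*+|x|^\alpha)(-u_0)^{k^*+|x|^\alpha-1}$ in $B$, $u_0=0$ on $\partial B$, with $\lambda>0$; the right-hand side is nonnegative and locally H\"older continuous, and $-u_0>0$ in $B$ by the strong maximum principle for the $k$-Hessian, so the regularity theory for $k$-admissible solutions (Caffarelli--Nirenberg--Spruck and its refinements) gives $u_0\in C^2(\overline{B})$ with $F_j[u_0]\ge0$, hence $u_0\in\Phi^k_{0,\mathrm{rad}}(B)$. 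The hard part is the strict inequality $\mathcal U_{k,N,\alpha}>\mathcal C_{N,k}$: the matched asymptotics for $\int_B|u_\mu|^{k^*+|x|^\alpha}\ud x$ along the concentrating family must be carried out carefully enough to verify that the near-origin supercritical gain $\mu^\alpha|\log\mu|$ beats both the truncation deficit $\mu^{(N-2k)/k}$ of the critical problem and the loss over $\{|u_\mu|<1\}$, and it is exactly this competition that pins down the range $\alpha<(N-2k)/k$; a secondary technical point is the concentration--compactness bookkeeping above — the a.e.\ convergence of the radial derivatives and Brezis--Lieb for the non-Hilbertian norm $\|\cdot\|_{\Phi^k_0}$ — for which the monotonicity inherent in $k$-admissibility is essential.
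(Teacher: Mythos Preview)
Your overall architecture coincides with the paper's: a concentration--compactness dichotomy hinged on the strict inequality $\mathcal U_{k,N,\alpha}>\mathcal C_{N,k}$ (the paper writes this as $\mathcal V_{k,N,\alpha}>\mathcal V_{k,N}$), with the strict inequality obtained by testing the truncated Talenti-type family and extracting the $\mu^{\alpha}|\log\mu|$ gain versus the $\mu^{(N-2k)/k}$ truncation deficit. On the compactness side you actually simplify the paper: to obtain a.e.\ convergence of the derivatives (needed for Brezis--Lieb on the norm), the paper passes through Ekeland's principle and tests the near-Euler--Lagrange relation against cutoffs, whereas your use of Helly's theorem on the nondecreasing functions $r\mapsto r^{N-k}(u_j')^k$ (monotone by $F_k[u_j]\ge0$) is shorter and exploits $k$-admissibility directly. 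One bookkeeping point: when you bound $\int_B|u_0|^{k^*+|x|^{\alpha}}\le (1-a)^{N/(N-2k)}\mathcal U_{k,N,\alpha}$ you are treating $u_0/\|u_0\|$ as a competitor, but at that stage $u_0$ is only known to lie in the weighted Sobolev space, not in $\Phi^k_{0,\mathrm{rad}}(B)$. The paper sidesteps this by running the whole dichotomy for the auxiliary supremum $\mathcal V_{k,N,\alpha}$ over $X_1$ and only afterwards upgrading the maximizer to $\Phi^k_{0,\mathrm{rad}}(B)$; you can do the same, or observe that by density the supremum over $\Phi^k_{0,\mathrm{rad}}(B)$ equals the one over its $X_1$-closure.

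There is, however, a genuine gap in your regularity step. You write the Euler--Lagrange equation and then invoke Caffarelli--Nirenberg--Spruck to conclude $u_0\in C^2(\overline B)$, asserting that the right-hand side $\lambda(k^*+|x|^{\alpha})(-u_0)^{k^*+|x|^{\alpha}-1}$ is locally H\"older. But a priori $u_0\in X_1$ may blow up at the origin like $r^{-(N-2k)/(k+1)}$, so neither boundedness nor H\"older continuity of the right-hand side is available, and the CNS machinery does not apply. The paper closes this by a Moser-type iteration in the weighted space (its Lemma~\ref{Lemma-regular}) to first prove $\sup_{(0,1]}|v_0|<\infty$; the key input is the general supercritical inequality of Lemma~\ref{super-bounded}, which controls $\int r^{N-1}|v|^{k^*+\frac{p}{p-k-1}r^{\alpha}}\,\ud r$ and thereby lets the iteration run despite the supercritical exponent. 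Once $L^\infty$ is in hand, the paper does not use CNS at all: it reads off $v_0'$ and $v_0''$ from the explicit integral representation of the radial equation and checks directly that $(r^{N-j}(-v_0')^{j})'\ge0$ for $1\le j\le k$. You should insert an $L^\infty$ bound before any regularity claim; the Moser iteration is the missing idea.
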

In comparison with the attainability of \eqref{primary}, which was already obtained in \cite{MR3514752}, Theorem~\ref{thm2} extends that for the fully nonlinear situation $k\ge 2$. In addition, even for $k=1$, the range of $\alpha$ (cf. \eqref{alphaDoRUUb}) is improved for $0<\alpha<N-2$ (see also \cite{Ngu}) and our extremal is an admissible function.

Finally, we study the existence of radially symmetric $k$-admissible solutions of the $k$-Hessian equation involving supercritical growth.
\begin{theorem}\label{thm3}
 Suppose $1\le k<N/2$ and $0<\alpha<(N-2k)/k$, then equation
 \begin{equation}\label{ourp}
\left\{\begin{aligned}
&\left.\begin{aligned}
&F_k[u]=(-u)^{k^{*}+|x|^{\alpha}-1}\\
&u<0 \\
\end{aligned}\right\}&\mbox{in}&\;\; B\\
&\;u=0&\mbox{on}&\;\;\partial B
\end{aligned}\right.
\end{equation}
admits at least one radially symmetric $k$-admissible solution.
\end{theorem}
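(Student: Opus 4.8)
\textbf{Proof strategy for Theorem~\ref{thm3}.} Set $p(x)=k^{*}+|x|^{\alpha}$. The plan is to realise the solution as a nontrivial critical point of the energy functional
$$
I(u)=\frac{1}{k+1}\int_{B}(-u)\,F_{k}[u]\,\ud x-\int_{B}\frac{|u|^{p(x)}}{p(x)}\,\ud x .
$$
Using the divergence structure of $F_{k}$ (so that the first variation of $u\mapsto\int_{B}(-u)F_{k}[u]\,\ud x$ is $-(k+1)F_{k}[u]$) and a direct differentiation of the variable-exponent term, one gets $\langle I'(u),\varphi\rangle=-\int_{B}\bigl(F_{k}[u]+|u|^{p(x)-2}u\bigr)\varphi\,\ud x$; hence any critical point $u\not\equiv0$ of $I$ solves $F_{k}[u]=-|u|^{p(x)-2}u$, which is precisely \eqref{ourp} once one knows $u<0$. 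Observe that no Lagrange multiplier appears, so there is nothing to rescale; this is why I set up a free functional rather than constrain the extremal problem of Theorem~\ref{thm2} (the variable exponent would prevent absorbing a multiplier by scaling). As anticipated in the Introduction, I would not argue on $\Phi^{k}_{0}(B)$ directly but pass to radial coordinates and work on the weighted Sobolev space of Section~\ref{sec2}; by Theorem~\ref{thm1} and Corollary~\ref{coro1} the corresponding embedding into $L_{p(\cdot)}(B)$ is continuous, so $I$ is well defined and of class $C^{1}$ there, and one checks a posteriori that the critical point produced is $k$-admissible and negative.

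\textbf{Minimax geometry and a Palais--Smale sequence.} Since $p(x)\ge k^{*}>k+1$ throughout $B$, the $(k+1)$-homogeneous term $\|u\|_{\Phi^{k}_{0}}^{k+1}$ dominates near the origin of the space and is dominated along rays $t\mapsto tu$ as $t\to\infty$, so $I$ has the mountain-pass geometry. Equivalently, on the Nehari set $\mathcal N=\{u\ne0:\ \|u\|_{\Phi^{k}_{0}}^{k+1}=\int_{B}|u|^{p(x)}\,\ud x\}$ one has $I(u)=\int_{B}\bigl(\tfrac1{k+1}-\tfrac1{p(x)}\bigr)|u|^{p(x)}\,\ud x\ge\bigl(\tfrac1{k+1}-\tfrac1{k^{*}}\bigr)\int_{B}|u|^{p(x)}\,\ud x$, and since $\tfrac{d}{dt}\langle I'(tu),tu\rangle|_{t=1}=\int_{B}(k+1-p(x))|u|^{p(x)}\,\ud x<0$ on $\mathcal N$, the set $\mathcal N$ is a genuine $C^{1}$ constraint, bounded away from $0$, with $c:=\inf_{\mathcal N}I>0$. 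By Ekeland's variational principle there is a minimizing sequence $(u_{n})\subset\mathcal N$ which is also a Palais--Smale sequence for $I$ at level $c$; the Nehari identity together with $p(x)>k+1$ forces $(u_{n})$ to be bounded, so up to a subsequence $u_{n}\rightharpoonup u_{0}$ weakly in the weighted space.

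\textbf{Compactness --- the main obstacle --- and conclusion.} The crux is to upgrade weak to strong convergence, i.e.\ to verify $(PS)_{c}$. On every annulus $\{\epsilon\le|x|\le1\}$ the radial (Strauss-type) bound $|u_{n}(x)|\le C_{\epsilon}\|u_{n}\|_{\Phi^{k}_{0}}$ together with the compactness of the embedding there give strong convergence of the nonlinear term away from the origin. Near the origin one uses a radial decay estimate $|u_{n}(x)|\lesssim\|u_{n}\|_{\Phi^{k}_{0}}\,|x|^{-\gamma}$ to obtain $|x|^{\alpha}\log|u_{n}(x)|\to0$ as $|x|\to0$ uniformly in $n$, hence $|u_{n}|^{p(x)}\le M(\epsilon)\,|u_{n}|^{k^{*}}$ on $\{|x|<\epsilon\}$, which reduces the matter to the critical exponent $k^{*}$ on a small ball. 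At this point, exactly as in the proof of Theorem~\ref{thm2}, the hypothesis $0<\alpha<(N-2k)/k$ --- which matches the decay rate at infinity of the extremal for the Sobolev inequality \eqref{S-inequality} --- is what forbids concentration: by testing $I$ along a suitable radial family (where the admissible extremal of Theorem~\ref{thm2} enters) one shows that $c$ lies strictly below the threshold associated with the best constant in \eqref{S-inequality}, so no mass escapes and $u_{n}\to u_{0}$ strongly. Therefore $u_{0}\in\mathcal N$, $I(u_{0})=c$, and the standard constrained-critical-point argument gives $I'(u_{0})=0$. Finally $u_{0}\not\equiv0$, and since $F_{k}[u_{0}]=(-u_{0})^{p(x)-1}\ge0$ the maximum principle yields $u_{0}<0$ in $B$; the known structure of radial solutions of $k$-Hessian equations with nonnegative right-hand side shows that $u_{0}$ is $k$-admissible, and since $(-u_{0})^{p(x)-1}$ is locally H\"older and positive in $B$, interior and boundary estimates for $F_{k}$ along the admissible solution $u_{0}$ bootstrap it to the smooth (at least $C^{2}(B)\cap C(\overline B)$) radially symmetric $k$-admissible solution of \eqref{ourp} required. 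The genuinely delicate point, as noted, is the threshold estimate near the origin: it is what forces the range $0<\alpha<(N-2k)/k$ and it reuses the machinery behind Theorem~\ref{thm2}.
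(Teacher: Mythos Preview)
Your strategy is essentially the paper's: pass to the radial weighted space $X_1$, set up a free (not constrained) $C^1$ functional, establish minimax geometry, prove the minimax level lies strictly below the first noncompactness threshold coming from the critical exponent $k^*$, and use this to exclude concentration of a Palais--Smale sequence; then recover $C^2$-regularity and $k$-admissibility from the integral representation. Two small points of comparison are worth recording.

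\emph{Nehari versus mountain pass.} The paper uses the Ambrosetti--Rabinowitz mountain-pass theorem without $(PS)$ to produce a $(PS)_{c_{MP}}$ sequence, and then shows $c_{MP}<\bigl(\tfrac{1}{k+1}-\tfrac{1}{k^*}\bigr)\bigl(\tfrac{\mathrm{C}^{k}_{N}}{N}\bigr)^{\frac{k+1}{k^*-k-1}}S^{N/2k}$. Your Nehari formulation is equivalent here since $p(x)>k+1$ makes the fibering map strictly concave along each ray; it buys a slightly cleaner description of the level but does not change the analysis.

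\emph{The threshold estimate.} This is where you should be careful with wording. What is needed is not the single extremal of Theorem~\ref{thm2} but the \emph{concentrating} truncated Talenti family $w_\epsilon=\mathcal{C}\eta v^{*}_{\epsilon}$; the point is the precise expansion
\[
\int_0^1\frac{r^{N-1}}{k^*+r^{\alpha}}|w_\epsilon|^{k^*+r^{\alpha}}\,\ud r=\frac{\mathcal{C}^{k^*}S^{N/2k}}{k^*}+\frac{\mathcal{C}_1}{k^*}\,\epsilon^{\alpha}|\log\epsilon|+o(\epsilon^{\alpha}|\log\epsilon|),
\]
together with $\|w_\epsilon\|_{X_1}^{k+1}=\mathcal{C}^{k+1}S^{N/2k}+O(\epsilon^{(N-2k)/k})$ (the paper's Lemma~\ref{lemma-sharper} and Lemma~\ref{lemmaJDE}). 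The gain $\epsilon^{\alpha}|\log\epsilon|$ beats the gradient error $O(\epsilon^{(N-2k)/k})$ precisely when $\alpha<(N-2k)/k$, which is how the range enters. The paper then carries out the delicate step of locating $t_\epsilon$ with $t_\epsilon=1+O(\epsilon^{\alpha}\log\epsilon)$ to conclude $\max_t I(tw_\epsilon)<$ threshold; on your Nehari side you would need the analogous expansion of the Nehari projection, which is the same computation. Finally, the paper's compactness argument is organised as: if the weak limit is $0$, the sequence is shown to be Palais--Smale for the pure-critical functional $I_0$ (by the estimates you indicate near and away from the origin), and $I_0$ satisfies $(PS)$ below the threshold, a contradiction. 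Your sketch captures this, but be aware that ``reducing to $k^*$ on a small ball'' is exactly the content of proving $I(v_n)=I_0(v_n)+o(1)$ and $I'(v_n)=I'_0(v_n)+o(1)$ when $v_n\rightharpoonup 0$.
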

To prove the existence of a radially symmetric solution of Problem \eqref{ourp} is equivalent to finding a solution of the following boundary value problem
\begin{equation}\label{Kourpradial}
\left\{\begin{aligned}
&\left.\begin{aligned}
&\mathrm{C}^{k}_{N}\left(r^{N-k}(w^{\prime})^{k}\right)^{\prime}=Nr^{N-1} (-w)^{k^{*}+r^{\alpha}-1}\\
&w<0 \\
\end{aligned}\right\}&\mbox{in}&\;\; (0,1)\\
&\;w(1)=0, \; w^{\prime}(0)=0
\end{aligned}\right.
\end{equation}
where $w(r)=u(|x|)$, and $\mathrm{C}^{m}_{n}=n!/((n-m)!m!)$ is the combinatorial constant. See \cite{MR1400797,Dai,Wei} for more details. It was recently shown in \cite{JDE2019} that the equation \eqref{Kourpradial}  admits at least one solution $w\in C^{2}(0,1)$, if $0<\alpha<\min\left\{N/(k+1), (N-2k)/k\right\}$. Theorem~\ref{thm3} improves the previous one in the sense that the solution is a $k$-admissible function and the existence condition is sharpened for $0<\alpha<(N-2k)/k$.

The rest of this paper is arranged as follows. In Section~\ref{sec2}, we show Theorem~\ref{thm1} and its consequence the Corollary~\ref{coro1}.  Section~\ref{sec3} is devoted to the study of an auxiliary extremal problem. The proof of Theorem~\ref{thm2} is given in Section~\ref{sec4}. In Section~\ref{sec5}, we ensure the existence of a radially symmetric $k$-admissible solution of the nonlinear equation \eqref{ourp}. 

\section{The inequality: Proof of Theorem~\ref{thm1}}
\label{sec2}
This section is devoted to prove Theorem~\ref{thm1}. In order to use variational techniques, we are going to employ the weighted Sobolev space $X_R=X_R^{1,k+1}$ which consists of all functions $v\in AC_{\mathrm{loc}}(0,R)$ satisfying
$$
\lim_{r\rightarrow R}v(r)=0,\;\;\; \int_{0}^{R}r^{N-k}|v^{\prime}|^{k+1}\mathrm{d}r<\infty\;\;\;\mbox{and}\;\;  \int_{0}^{R}r^{N-1}|v|^{k+1}\mathrm{d}r<\infty,
$$
where $AC_{\mathrm{loc}}(0, R)$ is the set of all locally absolutely continuous functions on interval $(0,R)$.

If  $0<R<\infty$, then $X_R$ is a Banach space endowed with the gradient norm
\begin{equation}\label{fullnorm}
\|v\|_{X_R}=\left(\omega_{N,k}\int_{0}^{R}r^{N-k}|v^{\prime}|^{k+1}\mathrm{d}r\right)^{\frac{1}{k+1}},
\end{equation} 
where $\omega_{N,k}=(\omega_{N-1}\mathrm{C}^{k}_{N})/N$ is a normalising constant, and  $\omega_{N-1}$ represents the area of the unit sphere in $\mathbb{R}^N$.
For more details on the weighted Sobolev space above as well as its applications, we refer the reader to \cite{JDE2019,QJM2020,MZ2020, DCDS2019} and to the recent work \cite{QJM2020} where an inherent discussion has been done. 

As a byproduct of a Hardy type inequality, which is essentially due to A. Kufner and B. Opic \cite{Opic} (see also \cite{Clement-deFigueiredo-Mitidieri}), the following continuous embedding holds:
\begin{equation}\label{Ebeddings}
    X_R \hookrightarrow L^q_{N-1}, \quad \mbox{if}\quad q \in \left.\left[1, k^{*}\right.\right]\;\;\;\; \mbox{(compact if}\;\;q<k^{*}),
\end{equation}
where $L^q_{N-j}=L^q_{N-j}(0,R),q\ge 1, j\in\left\{1,2,\cdots,k\right\}$ is the weighted Lebesgue space composed by all measurable functions $v$ on $(0,R)$ such that
\begin{equation}\nonumber
 \|v\|_{L^q_{N-j}}=
\left(\int_0^R r^{N-j}|v|^q\,\mathrm{d}r
\right)^{\frac{1}{q}}<+\infty.
\end{equation}
Let $u\in\Phi^{k}_{0,\mathrm{rad}}(B)$ be arbitrary. We recall that (see for instance \cite{Wang2})
\begin{equation}\label{radial-norm}
\|u\|_{\Phi^{k}_{0}}=\left(\omega_{N,k}\int_{0}^{1}r^{N-k}|u^{\prime}|^{k+1}\ud r\right)^{\frac{1}{k+1}}.
\end{equation}
Hence, setting $v(r)=u(x)$, $r=|x|$,  we clearly have  $v\in X_1$ satisfying
\begin{equation}\label{FX}
\int_{B}|u|^{k^{*}+|x|^{\alpha}}\mathrm{d}x=\omega_{N-1}\int_{0}^{1}r^{N-1}|v|^{k^{*}+r^{\alpha}}\mathrm{d}r,
\end{equation}
and
\begin{equation}\label{NormX}
\|u\|_{\Phi^{k}_0}=\|v\|_{X_1}.
\end{equation}
Taking into account \eqref{FX} and \eqref{NormX}, we are able to transfer our problem to weighted Sobolev space $X_1$. Indeed, we will first investigate the variational problem
\begin{equation}\label{supremumX} 
\mathcal{V}_{k,N,\alpha}=\sup_{\|v\|_{X_1}=1}\int_{0}^{1}r^{N-1}|v|^{k^{*}+r^{\alpha}}\mathrm{d}r.
\end{equation}
Note that $\mathcal{U}_{k,N,\alpha}\le \omega_{N-1}\mathcal{V}_{k,N,\alpha}$, but the converse inequality doesn't hold. However, in \cite[Propostion~2.4]{JDE2019} has proved that $\mathcal{V}_{k,N,\alpha}$
is finite. Of course, this implies Theorem~\ref{thm1} holds. Next, we are going to present another proof of Theorem~\ref{thm1} which works for more general situation, see Lemma~\ref{super-bounded} below. It will be necessary to obtain regularity estimate for extremal function in Section~\ref{sec4} below. 
\subsection{Proof Theorem~\ref{thm1}}
We start with the following radial type Lemma (cf. \cite{Strauss77}).
\begin{lemma}\label{r-typeLemma} Assume $1\le k<N/2$. Then,  for any $0<r\le 1$
\begin{equation}\label{2.X}
|v(r)|\le \left[\overline{c}(r^{-\frac{N-2k}{k}}-1)\right]^{\frac{k}{k+1}}\|v\|_{X_1},\;\;\forall\; v\in X_1,
\end{equation}
where $\overline{c}$ is a positive constant depending only on $N$ and $k$.
\end{lemma}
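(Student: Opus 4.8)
The plan is to reduce \eqref{2.X} to the fundamental theorem of calculus followed by a single application of H\"older's inequality. Fix $v\in X_1$ and $0<r\le1$. Since $v\in AC_{\mathrm{loc}}(0,1)$, for every $r\le s<1$ one has $v(s)-v(r)=\int_r^s v'(t)\,\ud t$, and I want to let $s\to1$. To do this legitimately one needs $v'\in L^1(r,1)$ --- which drops out of the H\"older estimate below --- and then the defining condition $\lim_{t\to1}v(t)=0$ gives $v(r)=-\int_r^1 v'(t)\,\ud t$, so that $|v(r)|\le\int_r^1|v'(t)|\,\ud t$.

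I would then factor the integrand as $|v'(t)|=\big(t^{N-k}|v'(t)|^{k+1}\big)^{1/(k+1)}\,t^{-(N-k)/(k+1)}$ and apply H\"older's inequality with the conjugate exponents $k+1$ and $(k+1)/k$, which yields
\[
|v(r)|\le\left(\int_r^1 t^{N-k}|v'(t)|^{k+1}\,\ud t\right)^{\frac{1}{k+1}}\left(\int_r^1 t^{-\frac{N-k}{k}}\,\ud t\right)^{\frac{k}{k+1}}.
\]
Extending the first integral to $(0,1)$ and using \eqref{fullnorm} bounds the first factor by $\omega_{N,k}^{-1/(k+1)}\|v\|_{X_1}$. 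For the second factor the hypothesis $1\le k<N/2$ is exactly what forces $(N-k)/k>1$; with $\beta:=(N-2k)/k>0$ one computes $\int_r^1 t^{-(N-k)/k}\,\ud t=\beta^{-1}(r^{-\beta}-1)=\frac{k}{N-2k}\big(r^{-(N-2k)/k}-1\big)$, which is finite for every $r>0$ and hence retroactively justifies $v'\in L^1(r,1)$. Putting the two estimates together gives
\[
|v(r)|\le\omega_{N,k}^{-\frac{1}{k+1}}\left(\frac{k}{N-2k}\right)^{\frac{k}{k+1}}\Big(r^{-\frac{N-2k}{k}}-1\Big)^{\frac{k}{k+1}}\|v\|_{X_1},
\]
which is \eqref{2.X} with $\overline{c}=\dfrac{k}{(N-2k)\,\omega_{N,k}^{1/k}}$, a constant depending only on $N$ and $k$.

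I do not expect any serious obstacle in this argument; the only point demanding a little care is the passage $s\to1$ in the fundamental theorem of calculus, which is not automatic from $v\in AC_{\mathrm{loc}}(0,1)$ alone but becomes valid once H\"older's inequality yields $v'\in L^1(r,1)$ and one invokes the boundary condition $\lim_{t\to1}v(t)=0$. It is also worth noting the two roles of $1\le k<N/2$: it makes $(N-k)/k>1$, so the decisive integral produces the singular factor $r^{-(N-2k)/k}$, and it makes $N-2k>0$, so that $\overline{c}>0$ and the right-hand side of \eqref{2.X} is well defined for all $0<r\le1$.
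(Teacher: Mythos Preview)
Your proof is correct and follows essentially the same route as the paper's: the fundamental theorem of calculus followed by H\"older with exponents $k+1$ and $(k+1)/k$, then explicit evaluation of $\int_r^1 t^{-(N-k)/k}\,\ud t$, arriving at the identical constant $\overline{c}=k/((N-2k)\,\omega_{N,k}^{1/k})$. If anything you are slightly more careful than the paper in justifying the passage $s\to1$ via $v'\in L^1(r,1)$ and the boundary condition.
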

\begin{proof}
Let $v\in X_1$ be arbitrary. Using 
the H\"{o}lder inequality one has
\begin{equation}\nonumber
\begin{aligned}
|v(r)|&\le  \int_{r}^{1}|v^{\prime}(s)|\mathrm{d}s
\le  \left(\int_{r}^{1}s^{N-k}|v^{\prime}|^{k+1}\mathrm{d} s\right)^{\frac{1}{k+1}} \left(\int_{r}^{1}s^{-\frac{N-k}{k}}\mathrm{d} s\right)^{\frac{k}{k+1}}\\
&= \left[\overline{c}(r^{-\frac{N-2k}{k}}-1)\right]^{\frac{k}{k+1}}\|v\|_{X_1},
\end{aligned}
\end{equation}
where $\overline{c}=k(\omega_{N,k})^{-1/k}/(N-2k)$.
\end{proof}  
The next result is a generalization of the supercritical Sobolev inequality of Do \'{O}, Ruf, and Ubilla \cite[Theorem~2.1]{MR3514752} to $k$-Hessian critical exponent $k^*$, $k\ge 2$. Our proof is inspired by \cite[Lemma~2.3]{Ngu} which allows us to assume a behavior of $f$ near $0$  weaker than that of \cite{MR3514752}, and we do not assume any behavior of $f$ near $1$.
\begin{lemma}\label{super-bounded}
Suppose that $f\in C([0,1),\mathbb{R}^{+})$ satisfies the conditions
\begin{enumerate}
\item [$(f_1)$] $f(0)=0$ and $f(r)>0$ for all $r>0$,\\

\item [$(f_2)$] there exists $c>0$ such that $f(r)|\log r|\le c$,
for $r$ near $0$.
\end{enumerate}
Then
\begin{equation}\nonumber
\sup_{v\in X_1,\; \|v\|_{X_1}=1}\int_{0}^{1}r^{N-1}|v|^{k^*+f(r)}\ud r<\infty.
\end{equation}
\end{lemma}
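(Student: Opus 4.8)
The plan is to split the integral $\int_0^1 r^{N-1}|v|^{k^*+f(r)}\,\mathrm{d}r$ over a small inner region $(0,\delta)$ and the complementary region $(\delta,1)$, and control each piece uniformly over the unit sphere of $X_1$. On the outer region $[\delta,1)$ the radial estimate of Lemma~\ref{r-typeLemma} gives $|v(r)|\le \bigl[\overline c(\delta^{-(N-2k)/k}-1)\bigr]^{k/(k+1)}=:M_\delta$ for all $r\in[\delta,1]$ and all $v$ with $\|v\|_{X_1}=1$; since $f$ is bounded on the compact set $[\delta, 1-\eta]$ and $k^*+f(r)\ge k^*$ everywhere, one bounds $|v|^{k^*+f(r)}$ by a fixed power of $\max(1,M_\delta)$ times $|v|^{k^*}$, and then $\int_\delta^1 r^{N-1}|v|^{k^*}\,\mathrm{d}r\le \|v\|_{L^{k^*}_{N-1}}^{k^*}\le C$ by the embedding \eqref{Ebeddings} with $q=k^*$. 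The only subtlety near $r=1$ is that $f$ need not be bounded there, but $f(r)\ge 0$ and $|v(r)|\le M_\delta$; if $M_\delta\ge 1$ we still control $|v|^{k^*+f(r)}\le M_\delta^{k^*+\sup_{[\delta,1)}f}$—which is not available if $f$ blows up—so more carefully one writes $|v|^{k^*+f(r)}=|v|^{k^*}\cdot|v|^{f(r)}$ and uses that $|v(r)|\le M_\delta$ together with $|v(r)|\le \bigl[\overline c(r^{-(N-2k)/k}-1)\bigr]^{k/(k+1)}\to 0$ as $r\to1$, so $|v(r)|^{f(r)}\le \max(1, M_\delta^{\,\sup f})$ only where $f$ is bounded and otherwise $|v(r)|<1$ forces $|v(r)|^{f(r)}\le 1$; splitting $[\delta,1)$ further into $\{|v|\ge 1\}$ (a compact subset of $[\delta,1)$ on which $f$ is bounded) and $\{|v|<1\}$ handles this cleanly.

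The main work is the inner region $(0,\delta)$, and here I would follow the argument indicated by the authors (inspired by \cite[Lemma~2.3]{Ngu}). Using Lemma~\ref{r-typeLemma} and $\|v\|_{X_1}=1$, write $|v(r)|\le \bigl[\overline c(r^{-(N-2k)/k}-1)\bigr]^{k/(k+1)}\le C_1 r^{-(N-2k)/(k+1)}$ for $r\in(0,\delta)$. Then
\begin{equation}\nonumber
r^{N-1}|v(r)|^{k^*+f(r)}=r^{N-1}|v(r)|^{k^*}\cdot|v(r)|^{f(r)}\le r^{N-1}|v(r)|^{k^*}\cdot\bigl(C_1 r^{-(N-2k)/(k+1)}\bigr)^{f(r)}
\end{equation}
on the set where the base exceeds $1$ (elsewhere the extra factor is $\le 1$). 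The point of hypothesis $(f_2)$ is exactly that $r^{-f(r)(N-2k)/(k+1)}=\exp\bigl(f(r)|\log r|\,\tfrac{N-2k}{k+1}\bigr)\le \exp\bigl(c\,\tfrac{N-2k}{k+1}\bigr)$ is bounded for $r$ near $0$, and $C_1^{f(r)}$ is bounded since $f(r)\to f(0)=0$; hence the correction factor $|v(r)|^{f(r)}$ is bounded by a constant $C_2$ uniformly in $r\in(0,\delta)$ and in $v$ on the unit sphere. Therefore $\int_0^\delta r^{N-1}|v|^{k^*+f(r)}\,\mathrm{d}r\le C_2\int_0^\delta r^{N-1}|v|^{k^*}\,\mathrm{d}r\le C_2\|v\|_{L^{k^*}_{N-1}}^{k^*}\le C$, again by \eqref{Ebeddings}. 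Choosing $\delta$ once and for all (small enough that the conclusion of $(f_2)$ and $f(r)\le 1$ both hold on $(0,\delta)$) makes all constants absolute.

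Adding the two estimates gives a bound independent of $v$, which is the claim. The step I expect to be the genuine obstacle is making the correction factor $|v(r)|^{f(r)}$ uniformly bounded in the inner region: one has to combine the pointwise decay rate $|v(r)|\lesssim r^{-(N-2k)/(k+1)}$ from Lemma~\ref{r-typeLemma} with the logarithmic control $(f_2)$ in precisely the right way, and keep track of the set where the base is below $1$ (where no hypothesis is needed). Everything else—the embedding $X_1\hookrightarrow L^{k^*}_{N-1}$, boundedness of $f$ on compacts away from $0$, and the crude handling of the region near $r=1$ using $f\ge 0$—is routine.
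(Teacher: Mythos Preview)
Your proposal is correct and follows essentially the same strategy as the paper: split the integral, use the radial estimate from Lemma~\ref{r-typeLemma} together with $(f_2)$ to bound the correction factor $|v|^{f(r)}$ on the inner piece, and handle the outer piece via a uniform pointwise bound on $|v|$. The paper's execution is slightly cleaner on the outer region: instead of choosing $\delta$ from the inner-region hypotheses and then doing casework on $\{|v|\ge 1\}$ versus $\{|v|<1\}$, it picks the split point $\rho=(\overline c/(1+\overline c))^{k/(N-2k)}$ directly from \eqref{2.X} so that $|v(r)|\le 1$ for every $r\ge\rho$ and every $v$ with $\|v\|_{X_1}=1$, which makes $\int_\rho^1 r^{N-1}|v|^{k^*+f(r)}\,\mathrm{d}r\le 1$ immediate and sidesteps any concern about $f$ near $r=1$.
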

\begin{proof}
For any $v\in X_1$, with $\|v\|_{X_1}=1$, Lemma~\ref{r-typeLemma} yields
\begin{equation}\label{radial-lemma}
|v(r)|\le \frac{c}{r^\frac{N}{k^*}}, \;\; 0<r\le 1,
\end{equation}
for some constant $c=c(k,N)>1$. We write
\begin{eqnarray}
\int_{0}^{1}r^{N-1}|v|^{k^{*}+f(r)}\ud r
&=&\int_{0}^{\rho}r^{N-1}|v|^{k^{*}+f(r)}\mathrm{d} r+\int_{\rho}^{1}r^{N-1}|v|^{k^{*}+f(r)}\mathrm{d}r, \label{twopieces}
\end{eqnarray}
where $0<\rho<1$ will be chosen later. Firstly, from \eqref{radial-lemma}
\begin{equation}\nonumber
f(r)\log\left(\frac{c}{r^\frac{N}{k^*}}\right)\le \left(\log c+\frac{N}{k^*}|\log r|\right)f(r).
\end{equation}
Thus, the assumptions $(f_1)$ and $(f_2)$ together with the continuity of $f$ imply
\begin{equation}\nonumber
\sup_{r\in (0,\rho]}\left(\frac{c}{r^\frac{N}{k^*}}\right)^{f(r)}\le c_0
\end{equation}
for some $c_0>0$. Hence, using \eqref{Ebeddings} it follows that
\begin{equation}\label{half}
\begin{aligned}
\int_{0}^{\rho}r^{N-1}|v|^{k^{*}+f(r)}\mathrm{d} r &\le \int_{0}^{\rho}r^{N-1}|v|^{k^{*}}\left(\frac{c}{r^{\frac{N}{k^*}}}\right)^{f(r)}\mathrm{d} r\\
& \le c_0\int_{0}^{1}r^{N-1}|v|^{k^{*}}\ud r\le C,\\
\end{aligned}
\end{equation}
for some $C>0$, which doesn't depend of $v\in X_1$, with $\|v\|_{X_1}=1$. Further, directly  from \eqref{2.X}, choosing $\rho=(\overline{c}/(1+\overline{c}))^{k/(N-2k)}$ one has
\begin{equation}\nonumber
\begin{aligned}
|v(r)|\le 1,\;\; \mbox{for all}\; r\ge 
\rho.
\end{aligned}
\end{equation}
Hence
\begin{equation}\label{half2}
\begin{aligned}
\int_{\rho}^{1}r^{N-1}|v|^{k^{*}+f(r)}\ud r &\le \int_{\rho}^{1}\ud r\le 1.
\end{aligned}
\end{equation}
Thus, \eqref{half} and \eqref{half2} together with \eqref{twopieces} yield our result.
\end{proof}
\subsection{Proof Corollary~\ref{coro1}} Let $u\in \Phi^{k}_{0,\mathrm{rad}}(B)$ be arbitrary nonzero function. From  Theorem~\ref{thm1}, there is $C>0$ such that
\begin{equation}\nonumber
\begin{aligned}
\int_{B}\left|\frac{u(x)}{\|u\|_{\Phi^{k}_0}}\right|^{k^{*}+|x|^{\alpha}}\ud x\le C.
\end{aligned}
\end{equation}
Thus, for any $\lambda>1$
\begin{equation}\nonumber
\begin{aligned}
\int_{B}\left|\frac{u(x)}{\lambda\|u\|_{\Phi^{k}_0}}\right|^{k^{*}+|x|^{\alpha}}\ud x
& \le  \frac{1}{\lambda^{k^*}} \int_{B}\left|\frac{u(x)}{\|u\|_{\Phi^{k}_0}}\right|^{k^{*}+|x|^{\alpha}}\ud x
\le \frac{C}{\lambda^{k^*}}.
\end{aligned}
\end{equation}
Consequently, by taking $\lambda_{*}>0$ such that $C/\lambda^{k^*}_{*}\le 1$ one has
\begin{equation}\nonumber
\|u\|_{L_{k^{*}}+|x|^{\alpha}}(B)\le \lambda_{*}\|u\|_{\Phi^{k}_0}
\end{equation}
and the proof is completed.
\section{An auxiliary extremal problem}
\label{sec3}
\noindent Let us denote by $\mathcal{V}_{k,N}$ the best constant to the embedding \eqref{Ebeddings}, for $q=k^*$ and $R=1$. Namely,
\begin{equation}\label{best-sob}
\mathcal{V}_{k,N}= \sup_{\|v\|_{X_1}=1} \int_{0}^{1}r^{N-1}|v|^{k^{*}}\mathrm{d} r.
\end{equation}
It is well known that the supremum in \eqref{best-sob} is not attained (cf. \cite{Clement-deFigueiredo-Mitidieri}), nevertheless we will be able to show that the supercritical extremal problem \eqref{supremumX} is attained. Precisely,
\begin{proposition}\label{prop1} Suppose $1\le k<N/2$ and $0<\alpha<(N-2k)/k$. Then
\begin{equation}\nonumber
\mathcal{V}_{k,N,\alpha}=\sup_{\|v\|_{X_1}=1}\int_{0}^{1}r^{N-1}|v|^{k^{*}+r^{\alpha}}\mathrm{d}r
\end{equation} 
is attained for some $v\in X_1$. 
\end{proposition}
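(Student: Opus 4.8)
The plan is to use the direct method in the calculus of variations. Let $(v_j)\subset X_1$ be a maximizing sequence for $\mathcal V_{k,N,\alpha}$, so that $\|v_j\|_{X_1}=1$ and $\int_0^1 r^{N-1}|v_j|^{k^*+r^\alpha}\,\mathrm dr\to\mathcal V_{k,N,\alpha}$; by Lemma~\ref{super-bounded} the supremum is finite, and it is clearly positive. Since $X_1$ is a reflexive Banach space (being modeled on $L^{k+1}$-type spaces), up to a subsequence $v_j\rightharpoonup v$ weakly in $X_1$, hence $\|v\|_{X_1}\le 1$ by weak lower semicontinuity of the norm. The first task is to pass to the limit in the functional. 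On the inner region one would use the compact embedding $X_1\hookrightarrow L^q_{N-1}$ for $q<k^*$ together with the pointwise bound from Lemma~\ref{r-typeLemma} to control the variable exponent: the crucial point is that $r^\alpha$ with $\alpha>0$ keeps $k^*+r^\alpha$ strictly above $k^*$ only away from the origin, while near $0$ the factor $(c/r^{N/k^*})^{r^\alpha}$ stays bounded exactly as in the proof of Lemma~\ref{super-bounded}. I would split $\int_0^1 = \int_0^\rho + \int_\rho^1$; on $[\rho,1]$ one has $|v_j|\le 1$ and the exponent is bounded, so a dominated-convergence / compactness argument applies after extracting an a.e.\ convergent subsequence, and on $[0,\rho]$ one estimates the tail uniformly and then lets $\rho\to 0$.

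The main obstacle is to rule out loss of mass, i.e.\ to show $\|v\|_{X_1}=1$ so that $v$ is admissible and actually attains the supremum (equivalently, that the weak limit is nonzero and the functional does not drop in the limit). This is exactly where the condition $0<\alpha<(N-2k)/k$ must be used and where the problem differs from the pure critical case \eqref{best-sob}, whose supremum is \emph{not} attained. The strategy I would follow is a concentration-compactness / scaling argument: if $v_j$ were to concentrate at the origin, the critical exponent $k^*$ is invariant under the natural dilation $v\mapsto \lambda^{(N-2k)/k}v(\lambda\cdot)$ associated with $X_1$, but the extra weight $r^{r^\alpha}$ (which under this scaling contributes a factor $\lambda^{-\alpha\cdot(\text{something})}$ blowing up or decaying depending on the sign) breaks that invariance. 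Concretely, one shows that a concentrating sequence can only see the critical exponent in the limit, so its limiting energy is at most $\mathcal V_{k,N}$ times a factor coming from the weight; comparing with a well-chosen \emph{non-concentrating} test function (e.g.\ a fixed bump, or a truncated Talenti-type radial profile for the $k$-Hessian) one verifies the strict inequality $\mathcal V_{k,N,\alpha}>\mathcal V_{k,N}$, which forbids concentration and forces the weak limit to carry all the mass.

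Once concentration is excluded, the argument closes quickly: from $v_j\to v$ strongly in $L^q_{N-1}$ for $q<k^*$ and the uniform tail bound near $0$, one upgrades to convergence of $\int_0^1 r^{N-1}|v_j|^{k^*+r^\alpha}\,\mathrm dr$ to $\int_0^1 r^{N-1}|v|^{k^*+r^\alpha}\,\mathrm dr=\mathcal V_{k,N,\alpha}>0$, so $v\not\equiv 0$; combined with $\|v\|_{X_1}\le 1$ and the fact that rescaling $v$ to unit norm would only increase the functional, we conclude $\|v\|_{X_1}=1$ and $v$ is the sought maximizer. A minor point to address along the way is that one may replace $v_j$ by $|v_j|$ (or its monotone rearrangement), so that the maximizing sequence, and hence $v$, can be taken nonnegative (or nonpositive), which will be convenient for the regularity and admissibility discussion in Section~\ref{sec4}.
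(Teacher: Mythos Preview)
Your overall architecture is right and coincides with the paper's three-step scheme: (i) prove the strict inequality $\mathcal V_{k,N,\alpha}>\mathcal V_{k,N}$ using truncated instantons (this is where $\alpha<(N-2k)/k$ is used, via Lemmas~\ref{lemma-Isharpiii}--\ref{lemma-sharper}); (ii) show that any normalized concentrating sequence has limiting value at most $\mathcal V_{k,N}$; (iii) run a dichotomy on a maximizing sequence to conclude attainment. Steps (i) and (ii) are correctly identified.

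The gap is in what you call ``closes quickly''. You assert that on $[0,\rho]$ ``one estimates the tail uniformly and then lets $\rho\to 0$'', and later that non-concentration plus the compact embedding for $q<k^{*}$ upgrades to convergence of $\int_0^1 r^{N-1}|v_j|^{k^{*}+r^{\alpha}}\,\mathrm dr$. This does not work: since $k^{*}+r^{\alpha}\to k^{*}$ as $r\to 0$, the contribution $\int_0^\rho r^{N-1}|v_j|^{k^{*}+r^{\alpha}}\,\mathrm dr$ is, up to a bounded factor (exactly the bound from Lemma~\ref{super-bounded}), the \emph{critical} quantity $\int_0^\rho r^{N-1}|v_j|^{k^{*}}\,\mathrm dr$, and there is no reason this is small uniformly in $j$. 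In other words, knowing only $v_j\rightharpoonup v$ and $v_j\to v$ in $L^q_{N-1}$ for $q<k^{*}$ is not enough to pass to the limit in the functional, nor to conclude that ``the weak limit carries all the mass''.

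The paper's Step~3 supplies the missing ingredient. One first applies Ekeland's variational principle so that the maximizing sequence satisfies an approximate Euler--Lagrange identity \eqref{Euler mult}. Testing with $\eta(v_n-v)$ for a cutoff $\eta$ supported in $[r_0/2,1]$, together with the compact embedding \eqref{emb-rabo}, gives strong convergence $v_n'\to v'$ in $L^{k+1}_{N-k}[r_0,1]$ for every $r_0>0$, hence $v_n'\to v'$ a.e.\ on $(0,1)$. Only with this a.e.\ convergence of derivatives can one invoke Brezis--Lieb for \emph{both} the functional and the gradient norm,
\[
1=\|v_n-v\|_{X_1}^{k+1}+\|v\|_{X_1}^{k+1}+o(1),\qquad
\int_0^1 r^{N-1}|v_n|^{k^{*}+r^{\alpha}}=\int_0^1 r^{N-1}|v_n-v|^{k^{*}+r^{\alpha}}+\int_0^1 r^{N-1}|v|^{k^{*}+r^{\alpha}}+o(1),
\]
and then close the dichotomy via the strict inequality $(1-t)^{k^{*}/(k+1)}+t^{k^{*}/(k+1)}<1$ for $0<t<1$. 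Without the Ekeland step you have no a.e.\ gradient convergence, no Brezis--Lieb splitting of the norm, and the argument does not close.
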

\noindent From now on we use the following notation:
\begin{definition}
A sequence $(v_{n})\subset X_1$ is normalized concentrating sequence at the origin if 
\begin{equation}
\|v_n\|_{X_1}=1,\;\; v_n\rightharpoonup 0\;\;\;\mbox{weakly in}\;\; X_1\;\;\;\mbox{and}\;\;\int_{r_0}^{1}r^{N-k}
|v_n^{\prime}|^{k+1}\ud r\rightarrow 0,\;\; \forall\, r_0>0.
\end{equation}
\end{definition}
\noindent To prove Proposition~\ref{prop1}, it is sufficient to show the following three steps:
\\
\paragraph{\textbf{Step 1:}} The strict inequality 
$
\mathcal{V}_{k,N}<\mathcal{V}_{k,N,\alpha}
$
holds;\\
\paragraph{\textbf{Step 2:}}
If $(v_{n})\subset X_1$ is any normalized concentrating sequence at origin, then
\begin{equation}\nonumber
\limsup_{n}\int_{0}^{1}r^{N-1}|v_n|^{k^*+r^{\alpha}}\ud r\le \mathcal{V}_{k,N};
\end{equation}
\paragraph{\textbf{Step 3:}} Let $(v_{n})\subset X_1$ be any maximizing sequence for $\mathcal{V}_{k,N,\alpha}$ in $X_1$. Then, either $(v_{n})$ is normalized concentrating at origin or $\mathcal{V}_{k,N,\alpha}$ is attained.\\

\noindent The rest of this section is devoted to prove that these three steps hold. 
\subsection{Proof of Step 1}
Firstly,  for each $0<R\le\infty$,  we define
\begin{equation}\label{SSp}
S_0(k^{*},R)=\inf\left\{\int_{0}^{R}r^{N-k}|v^{\prime}|^{k+1}\ud r\;\;|\;\; v\in X_R;\;\; \int_{0}^{R}r^{N-1}|v|^{k^{*}}\ud r=1\right\}.
\end{equation}
It is known that $S_0(k^{*},R)$ is independent of $R$, and that it is achieved when $R=+\infty$ (see \cite{Clement-deFigueiredo-Mitidieri}, for more details). In addition, the functions
\begin{equation}\label{instatons}
v^{*}_{\epsilon}(r)=\hat{c}\left(\frac{\epsilon^{\frac{2}{k+1}}}{\epsilon^{2}+r^2}\right)^{\frac{N-2k}{2k}},\;\; \epsilon>0
\end{equation}
with
\begin{equation}\label{snmc}
\hat{c}=\left[N\left(\frac{N-2k}{k}\right)^{k}\right]^{\frac{N-2k}{2k}}
\end{equation}
satisfy 
\begin{equation}\label{SSpu}
S^{\frac{N}{2k}}=\int_{0}^{\infty}r^{N-k}|(v^{*}_{\epsilon})^{\prime}|^{k+1}\ud r=\int_{0}^{\infty}r^{N-1}{|v^{*}_{\epsilon}}|^{k^{*}}\ud r,
\end{equation}
where  $S$ denotes the  value of $S_0(k^{*},R)$ for $R=+\infty$, and then for any $R>0$. 
\begin{remark}\label{relation-VS} Note that $(\omega_{N,k}S)^{k^*/(k+1)}\mathcal{V}_{k,N}=1$.
\end{remark} 
\begin{lemma}\label{lemma-Isharpiii} For any $\beta,\delta\ge 0$ and $0<\gamma<1$, there holds
\begin{equation}\label{Isharpiii}
\begin{aligned}
&\int_{0}^{\epsilon^{\gamma}}r^{N+\beta-1}|v^{*}_{\epsilon}|^{k^*}\left(\log\left(1+\frac{r^2}{\epsilon^{2}}\right)\right)^{\delta}\ud r\\
&=\left\{\begin{aligned}
&\hat{c}^{k^*}\epsilon^{\beta}\int_{0}^{\infty}\frac{s^{N+\beta-1}\left(\log\left(1+s^2\right)\right)^{\delta}}{(1+s^2)^{\frac{k+1}{2}\frac{N}{k}}}\ud s+o(\epsilon^{\beta})_{\epsilon\searrow0},&\;\;\mbox{if}\;\;& \beta<N/k\\
&\hat{c}^{k^*}\frac{(2(1-\gamma))^{\delta+1}}{2(\delta+1)}\epsilon^{\frac{N}{k}}(-\log\epsilon)^{\delta+1}+o(\epsilon^{\frac{N}{k}}(-\log\epsilon)^{\delta+1})_{\epsilon\searrow 0},&\;\;\mbox{if}\;\;& \beta=N/k\\
&\hat{c}^{k^*}\frac{(2(1-\gamma))^{\delta}}{\beta-\frac{N}{k}}\epsilon^{(\beta-\frac{N}{k})\gamma+\frac{N}{k}}(-\log\epsilon)^{\delta}+o(\epsilon^{(\beta-\frac{N}{k})\gamma+\frac{N}{k}}(-\log\epsilon)^{\delta})_{\epsilon\searrow 0},&\;\;\mbox{if}\;\;& \beta>N/k.
\end{aligned}\right.
\end{aligned}
\end{equation}
\end{lemma}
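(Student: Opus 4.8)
The plan is to reduce everything to the substitution $s = r/\epsilon$, after which the three cases correspond to whether the resulting $s$-integral converges at infinity, diverges logarithmically, or diverges polynomially. Explicitly, using the definition \eqref{instatons} of $v^*_\epsilon$, we have $|v^*_\epsilon(r)|^{k^*} = \hat{c}^{k^*} \epsilon^{N/k} (\epsilon^2+r^2)^{-\frac{k+1}{2}\frac{N}{k}}$ since $k^* = \frac{N(k+1)}{N-2k}$ gives $k^* \cdot \frac{N-2k}{2k} = \frac{N(k+1)}{2k}$ and $k^* \cdot \frac{2}{k+1}\cdot\frac{N-2k}{2k} = \frac{N}{k}$. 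Substituting $r = \epsilon s$, $\mathrm{d}r = \epsilon\,\mathrm{d}s$, and noting $\log(1+r^2/\epsilon^2) = \log(1+s^2)$, the left-hand side of \eqref{Isharpiii} becomes
\begin{equation}\nonumber
\hat{c}^{k^*}\,\epsilon^{\beta}\int_{0}^{\epsilon^{\gamma-1}}\frac{s^{N+\beta-1}\big(\log(1+s^2)\big)^{\delta}}{(1+s^2)^{\frac{k+1}{2}\frac{N}{k}}}\,\mathrm{d}s.
\end{equation}
Now the only task is the asymptotics of $J(\epsilon):=\int_0^{\epsilon^{\gamma-1}} g(s)\,\mathrm{d}s$ as $\epsilon\searrow 0$, where $g(s) = s^{N+\beta-1}(\log(1+s^2))^\delta (1+s^2)^{-\frac{k+1}{2}\frac{N}{k}}$ and the upper limit $\epsilon^{\gamma-1}\to\infty$ since $0<\gamma<1$.

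For the convergent case I would observe that near infinity $g(s)\sim s^{\beta - N/k - 1}(\log s^2)^\delta \cdot (\text{const})$ up to lower-order terms, which is integrable on $[1,\infty)$ precisely when $\beta < N/k$; hence $J(\epsilon)\to \int_0^\infty g(s)\,\mathrm{d}s$ and the tail $\int_{\epsilon^{\gamma-1}}^\infty g\,\mathrm{d}s = o(1)$, giving the first line (after multiplying by $\hat c^{k^*}\epsilon^\beta$). For the two divergent cases, the integrand's behavior at infinity dominates, so I would split $J(\epsilon) = \int_0^1 g + \int_1^{\epsilon^{\gamma-1}} g$; the first piece is a bounded constant, absorbed into the error, and on the second piece I replace $g(s)$ by its leading asymptotic $2^{-\frac{k+1}{2}\frac{N}{k}+\delta}\cdot$ — more cleanly, use $(1+s^2)^{-\frac{k+1}{2}\frac{N}{k}} = s^{-N(k+1)/k}(1+o(1))$ and $(\log(1+s^2))^\delta = (2\log s)^\delta(1+o(1))$ as $s\to\infty$, so the integrand is $\sim s^{\beta-N/k-1}(2\log s)^\delta$. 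Then $\int_1^{T} s^{\beta - N/k - 1}(2\log s)^\delta\,\mathrm{d}s$ with $T = \epsilon^{\gamma-1}$: when $\beta = N/k$ this is $\int_1^T s^{-1}(2\log s)^\delta\,\mathrm{d}s = \frac{(2\log T)^{\delta+1}}{2(\delta+1)}$, and $\log T = (\gamma-1)\log\epsilon = (1-\gamma)(-\log\epsilon)$, producing the factor $\frac{(2(1-\gamma))^{\delta+1}}{2(\delta+1)}(-\log\epsilon)^{\delta+1}$; when $\beta>N/k$, an integration by parts (or just the dominant-term estimate $\int_1^T s^{a-1}(\log s)^\delta\,\mathrm{d}s \sim \frac{1}{a}T^a(\log T)^\delta$ for $a = \beta - N/k>0$) gives $\frac{1}{\beta-N/k}T^{\beta-N/k}(2\log T)^\delta$, and substituting $T^{\beta-N/k} = \epsilon^{(\gamma-1)(\beta-N/k)} = \epsilon^{(\beta-N/k)\gamma - (\beta - N/k)}$, so after multiplying by $\epsilon^\beta$ one gets $\epsilon^{(\beta-N/k)\gamma + N/k}$ with the coefficient $\frac{(2(1-\gamma))^\delta}{\beta - N/k}$. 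Multiplying throughout by $\hat c^{k^*}\epsilon^\beta$ recovers all three displayed formulas.

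The main technical point — and the place where care is needed — is controlling the error terms uniformly as $\epsilon\searrow 0$: one must check that replacing $(1+s^2)^{-\frac{k+1}{2}\frac{N}{k}}$ by $s^{-N(k+1)/k}$ and $(\log(1+s^2))^\delta$ by $(2\log s)^\delta$ introduces only integrals of strictly lower growth order in $T=\epsilon^{\gamma-1}$. Concretely, the relative error in the integrand is $O(s^{-2})$ from the power factor and $O(1/\log s)$ from the logarithmic factor, so the error integrals are $o\big(T^{\max(\beta-N/k,0)}(\log T)^\delta\big)$ in the divergent cases and $o(1)$ in the convergent case — exactly what the stated $o(\cdot)_{\epsilon\searrow 0}$ terms permit. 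I would also remark that the constant $\hat c$ and the precise value of the convergent integral $\int_0^\infty s^{N+\beta-1}(\log(1+s^2))^\delta(1+s^2)^{-\frac{k+1}{2}\frac{N}{k}}\,\mathrm{d}s$ are finite because $\frac{k+1}{2}\frac{N}{k} - \frac{N+\beta}{2} > \frac12$ is equivalent to $\beta < N/k$, so no further justification of finiteness is needed in the first case. Beyond this bookkeeping the argument is a routine Laplace/Watson-type asymptotic computation.
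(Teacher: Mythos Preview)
Your proposal is correct and follows the same overall route as the paper: both perform the substitution $s=r/\epsilon$ to reduce the question to the asymptotics, as $T=\epsilon^{\gamma-1}\to\infty$, of
\[
J(T)=\int_0^{T}\frac{s^{N+\beta-1}(\log(1+s^2))^{\delta}}{(1+s^2)^{\frac{k+1}{2}\frac{N}{k}}}\,\mathrm{d}s,
\]
and then split into the three cases according to the behavior of the integrand at infinity.

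The only methodological difference is in how the divergent cases $\beta\ge N/k$ are handled. The paper simply applies L'H\^opital's rule to the quotients $J(T)/(-\log\epsilon)^{\delta+1}$ and $J(T)/\big(\epsilon^{(\beta-N/k)(\gamma-1)}(-\log\epsilon)^{\delta}\big)$, which immediately produces the constants $\frac{(2(1-\gamma))^{\delta+1}}{2(\delta+1)}$ and $\frac{(2(1-\gamma))^{\delta}}{\beta-N/k}$ without any separate error bookkeeping. You instead expand the integrand asymptotically, replacing $(1+s^2)^{-\frac{k+1}{2}\frac{N}{k}}$ by $s^{-N(k+1)/k}$ and $(\log(1+s^2))^{\delta}$ by $(2\log s)^{\delta}$, integrate the leading term explicitly, and then argue that the $O(s^{-2})$ and $O((\log s)^{-1})$ relative errors contribute only lower-order integrals. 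Both arguments are valid; the paper's L'H\^opital computation is shorter and sidesteps the explicit error control, while your expansion makes the provenance of the constants $(2(1-\gamma))^{\delta}$ and $(2(1-\gamma))^{\delta+1}$ more transparent.
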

\begin{proof}
Using \eqref{instatons}, by change of variables we have
\begin{equation}\label{Isharp}
\int_{0}^{\epsilon^{\gamma}}r^{N+\beta-1}|v^{*}_{\epsilon}|^{k^*}\left(\log\left(1+\frac{r^2}{\epsilon^{2}}\right)\right)^{\delta}\ud r=\hat{c}^{k^*}\epsilon^{\beta}\int_{0}^{\epsilon^{\gamma-1}}\frac{s^{N+\beta-1}}{(1+s^{2})^{\frac{k+1}{2}\frac{N}{k}}}\left(\log\left(1+s^2\right)\right)^{\delta}\ud s.
\end{equation}
If $\beta<N/k$, we have
\begin{equation}\label{Isha<}
\begin{aligned}
\int_{0}^{\infty}\frac{s^{N+\beta-1}}{(1+s^{2})^{\frac{k+1}{2}\frac{N}{k}}}\left(\log\left(1+s^2\right)\right)^{\delta}\ud s 
&\le \int_{0}^{\infty}\tau^{\delta} e^{-\frac{1}{2}(\frac{N}{k}-\beta)\tau}\ud \tau.
\end{aligned}
\end{equation}
If $\beta=N/k$, from the L'Hôpital rule we easily show that
\begin{equation}\label{Isha=}
\begin{aligned}
\lim_{\epsilon\rightarrow0}\frac{1}{(-\log\epsilon)^{\delta+1}}\int_{0}^{\epsilon^{\gamma-1}}\frac{s^{N+\beta-1}}{(1+s^2)^{\frac{k+1}{2}\frac{N}{k}}}\left(\log\left(1+s^2\right)\right)^{\delta}\ud s =\frac{(2(1-\gamma))^{\delta+1}}{2(\delta+1)}.
\end{aligned}
\end{equation}
Analogously, for $\beta>N/k$ we also get
\begin{equation}\label{Isha>}
\begin{aligned}
\lim_{\epsilon\rightarrow0}\frac{1}{\epsilon^{(\beta-\frac{N}{k})(\gamma-1)}(-\log\epsilon)^{\delta}}\int_{0}^{\epsilon^{\gamma-1}}\frac{s^{N+\beta-1}}{(1+s^2)^{\frac{k+1}{2}\frac{N}{k}}}\left(\log\left(1+s^2\right)\right)^{\delta}\ud s=\frac{(2(1-\gamma))^{\delta}}{\beta-\frac{N}{k}}.
\end{aligned}
\end{equation}
It follows from \eqref{Isharp}, \eqref{Isha<}, \eqref{Isha=}, and \eqref{Isha>} that \eqref{Isharpiii} holds.
\end{proof}
\begin{lemma}\label{lemma-Isharpiii-near1} Let $\beta\ge 0$ and $0<\gamma<1$. Then
\begin{equation}\label{Isharpiii-near1}
\begin{aligned}
&\int_{\epsilon^{\gamma}}^{1}r^{N+\beta-1}|v^{*}_{\epsilon}|^{k^*}\ud r
&=\left\{\begin{aligned}
&O\left(\epsilon^{(1-\gamma)\frac{N}{k}+\beta\gamma}\right)_{\epsilon\searrow0},\;\;&\mbox{if}&\;\; \beta<N/k\\
&O\left(\epsilon^{\frac{N}{k}}(-\log\epsilon)\right)_{\epsilon\searrow0},\;\;&\mbox{if}&\;\; \beta=N/k\\
& O\left(\epsilon^{\frac{N}{k}}\right)_{\epsilon\searrow 0},\;\;&\mbox{if}&\;\; \beta>N/k.\\
\end{aligned}\right.
\end{aligned}
\end{equation}
\end{lemma}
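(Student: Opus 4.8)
The plan is to carry out the same reduction as in the proof of Lemma~\ref{lemma-Isharpiii}, turning the integral into an explicit one-dimensional integral and then estimating its tail. First I would insert the closed form \eqref{instatons}: since $\frac{N-2k}{2k}\,k^{*}=\frac{N(k+1)}{2k}$, raising $v^{*}_{\epsilon}$ to the power $k^{*}$ gives
\[
|v^{*}_{\epsilon}(r)|^{k^{*}}=\hat{c}^{\,k^{*}}\,\frac{\epsilon^{N/k}}{(\epsilon^{2}+r^{2})^{\frac{N(k+1)}{2k}}},
\]
so that the substitution $r=\epsilon s$ yields, exactly as in \eqref{Isharp},
\[
\int_{\epsilon^{\gamma}}^{1}r^{N+\beta-1}|v^{*}_{\epsilon}|^{k^{*}}\ud r
=\hat{c}^{\,k^{*}}\epsilon^{\beta}\int_{\epsilon^{\gamma-1}}^{1/\epsilon}\frac{s^{N+\beta-1}}{(1+s^{2})^{\frac{N(k+1)}{2k}}}\ud s,
\]
the prefactor collapsing to $\epsilon^{\beta}$ once one checks the cancellation $\tfrac{N}{k}+N+\beta-\tfrac{N(k+1)}{k}=\beta$.

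Next I would estimate the remaining integral. Since $0<\gamma<1$, for $\epsilon$ small one has $1\le\epsilon^{\gamma-1}<1/\epsilon$ and both endpoints tend to $+\infty$ as $\epsilon\searrow0$; thus the whole integration range lies in $\{s\ge1\}$, where $(1+s^{2})^{-N(k+1)/(2k)}\le s^{-N(k+1)/k}$ gives $\frac{s^{N+\beta-1}}{(1+s^{2})^{N(k+1)/(2k)}}\le s^{\beta-1-N/k}$. It then suffices to integrate $s^{\beta-1-N/k}$ over $[\epsilon^{\gamma-1},1/\epsilon]$ and multiply by $\hat{c}^{\,k^{*}}\epsilon^{\beta}$, distinguishing three regimes according to the sign of $\beta-N/k$. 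When $\beta<N/k$ the exponent $\beta-1-N/k$ is $<-1$, the integral is controlled by its lower endpoint $\epsilon^{\gamma-1}$ and equals $O\bigl(\epsilon^{(\gamma-1)(\beta-N/k)}\bigr)$, so multiplication by $\epsilon^{\beta}$ produces $O\bigl(\epsilon^{(1-\gamma)N/k+\beta\gamma}\bigr)$. When $\beta=N/k$ the integrand is comparable to $s^{-1}$, the integral is $\gamma(-\log\epsilon)+O(1)=O(-\log\epsilon)$, and the outcome is $O\bigl(\epsilon^{N/k}(-\log\epsilon)\bigr)$. When $\beta>N/k$ the integral is controlled by its upper endpoint $1/\epsilon$ and equals $O\bigl(\epsilon^{-(\beta-N/k)}\bigr)$, hence $O\bigl(\epsilon^{N/k}\bigr)$. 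These are precisely the three alternatives in \eqref{Isharpiii-near1}.

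I do not expect a genuine obstacle here: the statement is an elementary asymptotic estimate. The two points that deserve a little care are keeping track of the powers of $\epsilon$ when passing from $r$ to $s=r/\epsilon$ (and confirming that they cancel down to $\epsilon^{\beta}$), and correctly identifying which endpoint of $[\epsilon^{\gamma-1},1/\epsilon]$ dominates in each regime — the lower one when $\beta<N/k$, either one when $\beta=N/k$, and the upper one when $\beta>N/k$. Everything else is a routine comparison with the power $s^{\beta-1-N/k}$.
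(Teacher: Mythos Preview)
Your proposal is correct and follows essentially the same approach as the paper: the same substitution $r=\epsilon s$ leading to $\hat{c}^{\,k^*}\epsilon^{\beta}\int_{\epsilon^{\gamma-1}}^{1/\epsilon}s^{N+\beta-1}(1+s^{2})^{-N(k+1)/(2k)}\ud s$, followed by an endpoint analysis according to the sign of $\beta-N/k$. The paper rewrites the integrand as $s^{\beta-N/k-1}(1+s^{-2})^{-N(k+1)/(2k)}$ and computes the exact limiting constants, whereas you bound the integrand by $s^{\beta-1-N/k}$ and integrate directly; since only $O$-estimates are claimed, your shortcut is perfectly adequate and arguably cleaner.
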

\begin{proof}
By change of variables we can write
\begin{equation}\nonumber
\begin{aligned}
\int_{\epsilon^{\gamma}}^{1}r^{N+\beta-1}|v^{*}_{\epsilon}|^{k^*}\ud r& =\hat{c}^{k^*}\epsilon^{\beta}\int_{\epsilon^{\gamma-1}}^{\epsilon^{-1}}\frac{s^{\beta-\frac{N}{k}-1}}{\left(1+s^{-2}\right)^{\frac{k+1}{2}\frac{N}{k}}}\ud s.
\end{aligned}
\end{equation}
If $\beta<N/k$, one has
\begin{equation}\nonumber
\begin{aligned}
\lim_{\epsilon\rightarrow 0}\frac{1}{\epsilon^{(1-\gamma)(\frac{N}{k}-\beta)}}\int_{\epsilon^{\gamma-1}}^{\epsilon^{-1}}\frac{s^{\beta-\frac{N}{k}-1}}{\left(1+s^{-2}\right)^{\frac{k+1}{2}\frac{N}{k}}}\ud s
=\frac{1}{(\frac{N}{k}-\beta)}.
\end{aligned}
\end{equation}
If $\beta=N/k$
\begin{equation}\nonumber
\begin{aligned}
0\le \lim_{\epsilon\rightarrow 0}\frac{1}{-\log\epsilon}\int_{\epsilon^{\gamma-1}}^{\epsilon^{-1}}\frac{s^{\beta-\frac{N}{k}-1}}{\left(1+s^{-2}\right)^{\frac{k+1}{2}\frac{N}{k}}}\ud s&=\lim_{\epsilon\rightarrow 0}\frac{1}{-\log\epsilon}\int_{\epsilon^{\gamma-1}}^{\epsilon^{-1}}\frac{1}{\left(1+s^{-2}\right)^{\frac{k+1}{2}\frac{N}{k}}}\frac{1}{s}\ud s\\
&\le \lim_{\epsilon\rightarrow 0}\frac{1}{-\log\epsilon}\int_{\epsilon^{\gamma-1}}^{\epsilon^{-1}}\frac{1}{s}\ud s=\gamma.
\end{aligned}
\end{equation}
Finally, for $\beta>N/k$
\begin{equation}\nonumber
\begin{aligned}
\lim_{\epsilon\rightarrow 0}\epsilon^{(\beta-\frac{N}{k})}\int_{\epsilon^{\gamma-1}}^{\epsilon^{-1}}\frac{s^{\beta-\frac{N}{k}-1}}{\left(1+s^{-2}\right)^{\frac{k+1}{2}\frac{N}{k}}}\ud s& \le \lim_{\epsilon\rightarrow 0}\epsilon^{(\beta-\frac{N}{k})}\int_{\epsilon^{\gamma-1}}^{\epsilon^{-1}}s^{\beta-\frac{N}{k}-1}\ud s\\
&=\frac{1}{(\beta-\frac{N}{k})}.
\end{aligned}
\end{equation}
\end{proof}
\begin{lemma} \label{lemma-Isharpiii-rabo} Let $0<\gamma<1$, then
\begin{equation}\nonumber
\int_{1}^{\infty}r^{N-1}|v^{*}_{\epsilon}|^{k^*}\ud r=o\left(\epsilon^{\frac{N}{k}(1-\gamma)}\right)_{\epsilon\searrow 0}.
\end{equation}
\end{lemma}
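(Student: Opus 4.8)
The plan is to reduce everything to the explicit formula \eqref{instatons} and a single elementary estimate. First I would substitute $v^{*}_{\epsilon}$ into the integrand and simplify the exponent using $k^{*}=N(k+1)/(N-2k)$: since
\[
\frac{N-2k}{2k}\cdot k^{*}=\frac{N-2k}{2k}\cdot\frac{N(k+1)}{N-2k}=\frac{N(k+1)}{2k},
\]
one gets
\[
|v^{*}_{\epsilon}(r)|^{k^{*}}=\hat{c}^{k^{*}}\,\frac{\epsilon^{\frac{2}{k+1}\cdot\frac{N(k+1)}{2k}}}{(\epsilon^{2}+r^{2})^{\frac{N(k+1)}{2k}}}=\hat{c}^{k^{*}}\,\frac{\epsilon^{\frac{N}{k}}}{(\epsilon^{2}+r^{2})^{\frac{N(k+1)}{2k}}}.
\]
This identity (the same one underlying Lemmas~\ref{lemma-Isharpiii} and \ref{lemma-Isharpiii-near1}) is really the only point that requires care, because it produces the factor $\epsilon^{N/k}$ out front.

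Next I would pull that factor out and bound the remaining integral crudely. For $r\ge 1$ we have $\epsilon^{2}+r^{2}\ge r^{2}$, hence
\[
\int_{1}^{\infty}r^{N-1}|v^{*}_{\epsilon}|^{k^{*}}\ud r=\hat{c}^{k^{*}}\epsilon^{\frac{N}{k}}\int_{1}^{\infty}\frac{r^{N-1}}{(\epsilon^{2}+r^{2})^{\frac{N(k+1)}{2k}}}\ud r\le \hat{c}^{k^{*}}\epsilon^{\frac{N}{k}}\int_{1}^{\infty}r^{N-1-\frac{N(k+1)}{k}}\ud r.
\]
The exponent in the last integrand is $N-1-\frac{N(k+1)}{k}=-1-\frac{N}{k}<-1$, so the integral converges to $k/N$, independently of $\epsilon$. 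Therefore
\[
\int_{1}^{\infty}r^{N-1}|v^{*}_{\epsilon}|^{k^{*}}\ud r\le \frac{k}{N}\,\hat{c}^{k^{*}}\,\epsilon^{\frac{N}{k}}.
\]

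Finally I would upgrade the bound $O(\epsilon^{N/k})$ to the stated little-$o$: writing $\epsilon^{N/k}=\epsilon^{\frac{N}{k}(1-\gamma)}\cdot\epsilon^{\frac{N}{k}\gamma}$ and using $0<\gamma<1$, so that $\frac{N}{k}\gamma>0$ and $\epsilon^{\frac{N}{k}\gamma}\to 0$ as $\epsilon\searrow 0$, we conclude $\int_{1}^{\infty}r^{N-1}|v^{*}_{\epsilon}|^{k^{*}}\ud r=o(\epsilon^{\frac{N}{k}(1-\gamma)})_{\epsilon\searrow 0}$. I do not expect any genuine obstacle here; the statement is a routine tail estimate, and the only thing to keep straight is the algebra in the exponent that makes the $[1,\infty)$ integral $\epsilon$-uniformly finite.
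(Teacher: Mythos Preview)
Your proof is correct and a bit more direct than the paper's. The paper performs the change of variables $s=r/\epsilon$ to rewrite
\[
\int_{1}^{\infty}r^{N-1}|v^{*}_{\epsilon}|^{k^{*}}\ud r=\hat{c}^{k^{*}}\int_{\epsilon^{-1}}^{\infty}\frac{s^{-\frac{N}{k}-1}}{(1+s^{-2})^{\frac{k+1}{2}\frac{N}{k}}}\ud s,
\]
and then applies L'H\^opital's rule to the quotient by $\epsilon^{\frac{N}{k}(1-\gamma)}$, obtaining a limit that vanishes because of the extra factor $\epsilon^{\gamma N/k}$. You instead keep the original variable, use the crude pointwise bound $(\epsilon^{2}+r^{2})^{-\frac{N(k+1)}{2k}}\le r^{-\frac{N(k+1)}{k}}$ for $r\ge 1$, and integrate explicitly to get the sharp order $O(\epsilon^{N/k})$, from which the little-$o$ follows immediately since $\gamma>0$. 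Both routes rest on the same algebraic identity $|v^{*}_{\epsilon}|^{k^{*}}=\hat{c}^{k^{*}}\epsilon^{N/k}(\epsilon^{2}+r^{2})^{-\frac{N(k+1)}{2k}}$; your argument simply bypasses the substitution and L'H\^opital, trading them for a one-line integral bound, which is arguably cleaner for this tail term.
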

\begin{proof} Indeed, we have
\begin{equation}\nonumber
\begin{aligned}
\int_{1}^{\infty}r^{N-1}|v^{*}_{\epsilon}|^{k^*}\ud r& =\hat{c}^{k^*}\int_{\epsilon^{-1}}^{\infty}\frac{s^{-\frac{N}{k}-1}}{\left(1+s^{-2}\right)^{\frac{k+1}{2}\frac{N}{k}}}\ud s,
\end{aligned}
\end{equation}
and
\begin{equation}\nonumber
\begin{aligned}
\lim_{\epsilon\rightarrow 0}\frac{1}{\epsilon^{(1-\gamma)\frac{N}{k}}}\int_{\epsilon^{-1}}^{\infty}\frac{s^{-\frac{N}{k}-1}}{\left(1+s^{-2}\right)^{\frac{k+1}{2}\frac{N}{k}}}\ud s
=\frac{1}{(1-\gamma)\frac{N}{k}}\lim_{\epsilon\rightarrow 0}\frac{1}{\epsilon^{(1-\gamma)\frac{N}{k}-1}}\frac{\epsilon^{\frac{N}{k}-1}}{(1+\epsilon^2)^{\frac{N}{k}\frac{k+1}{2}}}=0.
\end{aligned}
\end{equation}
\end{proof}
\noindent Let us consider $\eta\in C^{\infty}_{0}(0,1)$ to be a fixed cut-off function satisfying
\begin{equation}\label{cut-off}
 0\le\eta\le 1,\;\; \eta(r)\equiv 1,\;\;\forall\; r\in (0,r_0] \;\;\mbox{and}\quad \eta(r)\equiv 0, \;\; \forall\; r \in [2r_0,1],
\end{equation}
for some $0<r_0<2r_0<1$. 

\noindent In the same line of \cite{MR0709644}, the following result was recently shown in \cite{JDE2019}:
\begin{lemma}\label{lemmaJDE} The family $(v^{*}_{\epsilon})_{\epsilon>0}$ given by \eqref{instatons}  satisfies:  
 
\begin{enumerate}
\item[$\mathrm{(a)}$] $\int_{0}^{1}r^{N-k}|(\eta v^{*}_{\epsilon})^{\prime}|^{k+1}\ud r = S^{\frac{N}{2k}}+O(\epsilon^{\frac{N-2k}{k}})_{\epsilon\searrow 0}
$\\
\item[$\mathrm{(b)}$] $\int_{0}^{1}r^{N-1}|\eta v^{*}_{\epsilon}|^{k^{*}}\ud r=S^{\frac{N}{2k}}+O(\epsilon^{\frac{N}{k}})_{\epsilon\searrow 0}$ \\
\end{enumerate}
where $\eta$ is the cut-off function given by \eqref{cut-off}.
\end{lemma}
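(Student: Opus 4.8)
The plan is to reduce both identities to the scale-invariant quantities in \eqref{SSpu} and to estimate what is lost by inserting the cut-off $\eta$. Writing $a=\frac{N-2k}{k(k+1)}$ and $b=\frac{N-2k}{2k}$, formula \eqref{instatons} gives $v^{*}_{\epsilon}(r)=\hat c\,\epsilon^{a}(\epsilon^{2}+r^{2})^{-b}$ and $(v^{*}_{\epsilon})'(r)=-2b\,\hat c\,\epsilon^{a}\,r\,(\epsilon^{2}+r^{2})^{-b-1}$, and a short computation of the exponents ($b\,k^{*}=\frac{N(k+1)}{2k}$, $a\,k^{*}=\frac{N}{k}$, $2b+1=\frac{N-k}{k}$, $a(k+1)=\frac{N-2k}{k}$) produces, for every $r>0$, the crude pointwise bounds
\begin{equation}\nonumber
r^{N-1}\,|v^{*}_{\epsilon}(r)|^{k^{*}}\le \hat c^{\,k^{*}}\,\epsilon^{N/k}\,r^{-1-N/k},\qquad |(v^{*}_{\epsilon})'(r)|\le C\,\epsilon^{a}\,r^{-(N-k)/k},\qquad |v^{*}_{\epsilon}(r)|\le \hat c\,\epsilon^{a}\,r^{-(N-2k)/k}.
\end{equation}
Since $1\le k<N/2$, the exponents $1+\frac{N}{k}$ and $\frac{N-k}{k}$ both exceed $1$, so the weights $r^{-1-N/k}$ and $r^{-(N-k)/k}$ are integrable on $[r_{0},\infty)$; this is the one place where the hypothesis $k<N/2$ is actually used.

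For part $\mathrm{(b)}$ I would use that $\eta\equiv 1$ on $(0,r_{0}]$ by \eqref{cut-off} to write $\int_{0}^{1}r^{N-1}|\eta v^{*}_{\epsilon}|^{k^{*}}\ud r=\int_{0}^{r_{0}}r^{N-1}|v^{*}_{\epsilon}|^{k^{*}}\ud r+\int_{r_{0}}^{1}r^{N-1}|\eta v^{*}_{\epsilon}|^{k^{*}}\ud r$, and then replace the first integral, using \eqref{SSpu}, by $S^{\frac{N}{2k}}-\int_{r_{0}}^{\infty}r^{N-1}|v^{*}_{\epsilon}|^{k^{*}}\ud r$. Since $0\le\eta\le 1$, the second integral is nonnegative and bounded by $\int_{r_{0}}^{\infty}r^{N-1}|v^{*}_{\epsilon}|^{k^{*}}\ud r$, and the first crude bound above shows both remainder integrals are $\le \hat c^{\,k^{*}}\epsilon^{N/k}\int_{r_{0}}^{\infty}r^{-1-N/k}\ud r=O(\epsilon^{N/k})$. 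Collecting the pieces gives $\mathrm{(b)}$.

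For part $\mathrm{(a)}$ I would expand $(\eta v^{*}_{\epsilon})'=\eta(v^{*}_{\epsilon})'+\eta' v^{*}_{\epsilon}$; by \eqref{cut-off} the integrand $r^{N-k}|(\eta v^{*}_{\epsilon})'|^{k+1}$ coincides with $r^{N-k}|(v^{*}_{\epsilon})'|^{k+1}$ on $(0,r_{0}]$ and vanishes on $[2r_{0},1]$, so the only extra contribution beyond what \eqref{SSpu} supplies comes from the cross-over strip $[r_{0},2r_{0}]$. Writing $\int_{0}^{r_{0}}r^{N-k}|(v^{*}_{\epsilon})'|^{k+1}\ud r=S^{\frac{N}{2k}}-\int_{r_{0}}^{\infty}r^{N-k}|(v^{*}_{\epsilon})'|^{k+1}\ud r$, the tail is bounded via the second crude estimate (using $r^{N-k}\,r^{-(N-k)(k+1)/k}=r^{-(N-k)/k}$) by $C\epsilon^{(N-2k)/k}\int_{r_{0}}^{\infty}r^{-(N-k)/k}\ud r=O(\epsilon^{(N-2k)/k})$, while on the fixed compact strip $[r_{0},2r_{0}]$ the last two crude bounds together with the boundedness of $\eta'$ give $|(\eta v^{*}_{\epsilon})'|\le C\epsilon^{a}$, hence $\int_{r_{0}}^{2r_{0}}r^{N-k}|(\eta v^{*}_{\epsilon})'|^{k+1}\ud r\le C\epsilon^{a(k+1)}=O(\epsilon^{(N-2k)/k})$. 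Adding the three contributions yields $\mathrm{(a)}$.

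I do not expect a genuine obstacle here: the argument is a standard Brezis--Nirenberg type truncation estimate in the spirit of \cite{MR0709644}, and the only point requiring care is keeping the exponents straight and verifying that $k<N/2$ forces the tail integrals to converge. If one prefers, the tail estimates away from the origin could instead be quoted from Lemmas~\ref{lemma-Isharpiii-near1} and \ref{lemma-Isharpiii-rabo}, but the pointwise bounds above are sharper in this range and keep the proof self-contained.
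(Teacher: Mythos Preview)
Your argument is correct and is precisely the standard Brezis--Nirenberg truncation estimate that the paper alludes to with the phrase ``In the same line of \cite{MR0709644}''. The paper itself does not supply a proof of this lemma but cites \cite{JDE2019}, so there is nothing further to compare; your self-contained derivation via the pointwise tail bounds is exactly what that reference does.
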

\noindent From now on we will denote
\begin{equation}\label{we}
w_{\epsilon}(r)=\mathcal{C}\eta(r) v^{*}_{\epsilon}(r)=A\eta(r)\left(\frac{\epsilon^{\frac{2}{k+1}}}{\epsilon^{2}+r^2}\right)^{\frac{N-2k}{2k}},\;\;\epsilon>0
\end{equation}
where $\mathcal{C}>0$ is arbitrary and $A=\mathcal{C}\hat{c}$.
\begin{lemma} \label{lemma-sharper}Let $\alpha>0$ and $(w_{\epsilon})$ be given in \eqref{we}. Then, there exists $\mathcal{C}_{1}>0$ such that
\begin{equation}\label{lemma-estimate}
\int_{0}^{1}r^{N-1}|w_{\epsilon}|^{k^*+r^{\alpha}}\ud r = \mathcal{C}^{k^*}S^{\frac{N}{2k}}+ \left\{
\begin{aligned}
& \mathcal{C}_{1}|\log\epsilon|\epsilon^{\alpha}+o\left(\epsilon^{\alpha}|\log\epsilon|\right)_{{\epsilon\searrow0}},\;\;&\mbox{if}&\;\;\alpha<N/k\\
& O\left(\epsilon^{\frac{N}{k}(1-\gamma)}\right)_{\epsilon\searrow 0},\;\;&\mbox{if}&\;\; \alpha\ge N/k
\end{aligned}\right.
\end{equation}
and 
\begin{equation}\label{lemma-estimate2-sharp}
\int_{0}^{1}\frac{r^{N-1}}{k^*+r^{\alpha}}|w_{\epsilon}|^{k^{*}+r^{\alpha}}\ud r = \frac{\mathcal{C}^{k^*}S^{\frac{N}{2k}}}{k^*}+ \left\{
\begin{aligned}
& \frac{\mathcal{C}_{1}}{k^*}|\log\epsilon|\epsilon^{\alpha}+o\left(\epsilon^{\alpha}|\log\epsilon|\right)_{\epsilon\searrow0},\;\;&\mbox{if}&\;\;\alpha<N/k\\
& O\left(\epsilon^{\frac{N}{k}(1-\gamma)}\right)_{\epsilon\searrow 0},\;\;&\mbox{if}&\;\; \alpha\ge N/k
\end{aligned}\right.
\end{equation} 
for arbitrary $0<\gamma<1/(k+1)$ small enough. 
\end{lemma}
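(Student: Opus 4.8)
The plan is to compute the two integrals by localizing near the origin, where $\eta\equiv 1$, and splitting the radial integration into the three regions $(0,\epsilon^{\gamma})$, $(\epsilon^{\gamma},1)$ (or $(\epsilon^{\gamma},2r_0)$ since $\eta\equiv 0$ beyond $2r_0$), using the asymptotic estimates already established in Lemmas~\ref{lemma-Isharpiii}, \ref{lemma-Isharpiii-near1}, and \ref{lemma-Isharpiii-rabo}. The key observation is that $|w_\epsilon(r)|^{k^*+r^\alpha}=|w_\epsilon(r)|^{k^*}\cdot|w_\epsilon(r)|^{r^\alpha}$, and on the support of $\eta$ one has $|w_\epsilon(r)|=A\eta(r)(\epsilon^{2/(k+1)}/(\epsilon^2+r^2))^{(N-2k)/(2k)}$, so that
\begin{equation}\nonumber
|w_\epsilon(r)|^{r^\alpha}=\exp\!\left(r^\alpha\Big[\log(A\eta(r))+\tfrac{N-2k}{2k}\big(\tfrac{2}{k+1}\log\epsilon-\log(\epsilon^2+r^2)\big)\Big]\right).
\end{equation}
The dominant term in the exponent, after multiplying by $r^\alpha$, comes from $-\tfrac{N-2k}{2k}r^\alpha\log(\epsilon^2+r^2)$; on the critical scale $r\sim\epsilon$ this is of size $\epsilon^\alpha|\log\epsilon|$, which is exactly the order appearing in the statement. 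So the strategy is a first-order Taylor expansion $|w_\epsilon|^{r^\alpha}=1+r^\alpha\log|w_\epsilon|+O((r^\alpha\log|w_\epsilon|)^2)$ inside the integral, giving
\begin{equation}\nonumber
\int_0^1 r^{N-1}|w_\epsilon|^{k^*+r^\alpha}\,\ud r=\int_0^1 r^{N-1}|w_\epsilon|^{k^*}\,\ud r+\int_0^1 r^{N+\alpha-1}|w_\epsilon|^{k^*}\log|w_\epsilon|\,\ud r+(\text{higher order}).
\end{equation}
The first term is $\mathcal{C}^{k^*}S^{N/(2k)}+O(\epsilon^{N/k})$ by Lemma~\ref{lemmaJDE}(b).

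For the second term I would write $\log|w_\epsilon|=\log A+\log\eta+\tfrac{N-2k}{2k}\big(\tfrac{2}{k+1}\log\epsilon-\log(\epsilon^2+r^2)\big)$ and observe that $\tfrac{2}{k+1}\log\epsilon-\log(\epsilon^2+r^2)=-\log\big(\tfrac{\epsilon^2+r^2}{\epsilon^{2/(k+1)}}\big)$; a short manipulation expresses this in terms of $-\log(1+r^2/\epsilon^2)$ plus a term linear in $\log\epsilon$. Thus the leading contribution is a combination of integrals of the form $\int_0^{\epsilon^\gamma}r^{N+\alpha-1}|v^*_\epsilon|^{k^*}\,\ud r$ (with $\delta=0$) and $\int_0^{\epsilon^\gamma}r^{N+\alpha-1}|v^*_\epsilon|^{k^*}\log(1+r^2/\epsilon^2)\,\ud r$ (with $\delta=1$), both covered by Lemma~\ref{lemma-Isharpiii} with $\beta=\alpha$: when $\alpha<N/k$ these are of order $\epsilon^\alpha$ and $\epsilon^\alpha$ respectively, but the coefficient-carrying term $(\tfrac{N-2k}{2k})\cdot(-\log(\epsilon^2+r^2))$ once combined produces the $\epsilon^\alpha|\log\epsilon|$ term with an explicit positive constant $\mathcal{C}_1$ (positive because $\tfrac{N-2k}{2k}>0$ and the dominant sign of $-\log(\epsilon^2+r^2)$ on $(0,\epsilon^\gamma)$ is positive once $\alpha<N/k$ forces concentration at scale $\epsilon$). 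The tails $\int_{\epsilon^\gamma}^1$ and $\int_1^\infty$ are controlled by Lemmas~\ref{lemma-Isharpiii-near1} and \ref{lemma-Isharpiii-rabo} and are of strictly smaller order once $\gamma<1/(k+1)$. When $\alpha\ge N/k$ all these pieces are $O(\epsilon^{N/k}(-\log\epsilon)^{O(1)})$, absorbed into $O(\epsilon^{(N/k)(1-\gamma)})$.

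The second displayed identity \eqref{lemma-estimate2-sharp} follows by the same decomposition, noting that $\tfrac{1}{k^*+r^\alpha}=\tfrac{1}{k^*}-\tfrac{r^\alpha}{k^*(k^*+r^\alpha)}=\tfrac{1}{k^*}+O(r^\alpha)$, so the extra factor only perturbs the main term $\tfrac{\mathcal{C}^{k^*}S^{N/(2k)}}{k^*}$ by quantities of the same (or smaller) order as the error terms already present, and the $\tfrac{1}{k^*}$-multiple of the $\epsilon^\alpha|\log\epsilon|$ term survives. The main obstacle, and the step requiring genuine care rather than bookkeeping, is justifying the termwise expansion $|w_\epsilon|^{r^\alpha}=1+r^\alpha\log|w_\epsilon|+(\text{controlled remainder})$ uniformly enough that the remainder integrates to $o(\epsilon^\alpha|\log\epsilon|)$ — one must check that $r^\alpha|\log|w_\epsilon(r)||$ is uniformly small on the support of $\eta$ (it is, since $r^\alpha\to0$ and $\log|w_\epsilon|$ grows only logarithmically in $\epsilon$ and $r$), and then bound the quadratic remainder by an integral of the type handled by Lemma~\ref{lemma-Isharpiii} with $\delta=2$, which is of order $\epsilon^\alpha(\log\epsilon)^0$ or smaller when $\alpha<N/k$. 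Pinning down the exact value of $\mathcal{C}_1$ (it will be $\tfrac{N-2k}{2k}\hat{c}^{k^*}\mathcal{C}^{k^*}$ times a universal constant times a convergent integral, up to the normalization in Lemma~\ref{lemma-Isharpiii}) and verifying $\mathcal{C}_1>0$ is the one genuinely delicate point, since the whole purpose of this lemma is to feed the strict inequality $\mathcal{V}_{k,N}<\mathcal{V}_{k,N,\alpha}$ in Step~1.
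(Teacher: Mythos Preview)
Your approach is essentially the paper's: split at $r=\epsilon^\gamma$, Taylor-expand $|w_\epsilon|^{r^\alpha}=1+r^\alpha\log|w_\epsilon|+O((r^\alpha\log|w_\epsilon|)^2)$ on the inner piece, write $\log|w_\epsilon|=\log A-\tfrac{N-2k}{k+1}\log\epsilon-\tfrac{N-2k}{2k}\log(1+r^2/\epsilon^2)$, and feed the resulting integrals into Lemma~\ref{lemma-Isharpiii}; for \eqref{lemma-estimate2-sharp} use $\tfrac{1}{k^*+r^\alpha}=\tfrac{1}{k^*}-\tfrac{r^\alpha}{k^*(k^*+r^\alpha)}$ exactly as you suggest.

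Two points to tighten. First, your claim that $r^\alpha|\log|w_\epsilon||$ is uniformly small ``on the support of $\eta$'' is false as written: near $r=2r_0$ one has $\eta\to 0$, so $|\log|w_\epsilon||\to\infty$ while $r^\alpha$ stays bounded away from $0$. The expansion is only valid on $(0,\epsilon^\gamma)$, where $\eta\equiv 1$ and $r^\alpha\le\epsilon^{\alpha\gamma}\to 0$; the paper makes this clean by introducing the crossover radius $a_\epsilon\sim\epsilon^{1/(k+1)}$ at which $w_\epsilon=1$, and checks the $o(1)$ bound separately on $(0,a_\epsilon)$ (where $w_\epsilon\ge 1$) and $(a_\epsilon,\epsilon^\gamma)$ (where $w_\epsilon\le 1$). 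Second, on the tail $(\epsilon^\gamma,1)$ Lemma~\ref{lemma-Isharpiii-near1} only controls $\int r^{N+\beta-1}|v^*_\epsilon|^{k^*}$, so you must first observe that $w_\epsilon\le 1$ there (because $\epsilon^\gamma>a_\epsilon$) to get $|w_\epsilon|^{k^*+r^\alpha}\le \mathcal{C}^{k^*}|v^*_\epsilon|^{k^*}$. Finally, note that the $\epsilon^\alpha|\log\epsilon|$ contribution and the constant $\mathcal{C}_1=\tfrac{N-2k}{k+1}A^{k^*}\int_0^\infty s^{N+\alpha-1}(1+s^2)^{-(k+1)N/(2k)}\,\ud s$ come from the term $-\tfrac{N-2k}{k+1}\log\epsilon$ multiplied by $\int_0^{\epsilon^\gamma}r^{N+\alpha-1}|w_\epsilon|^{k^*}\,\ud r$ (Lemma~\ref{lemma-Isharpiii} with $\beta=\alpha$, $\delta=0$), not from the $\log(1+r^2/\epsilon^2)$ piece, which contributes only at order $\epsilon^\alpha$.
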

\begin{proof}
It follows from the definition of  $v^{*}_{\epsilon}$ in \eqref{instatons} that $\mathcal{C}v^{*}_{\epsilon}(r)\le 1$ if and only if
\begin{equation}\label{a-eps}
r\ge \epsilon^{\frac{1}{k+1}}\sqrt{A^{\frac{2k}{N-2k}}-\epsilon^{\frac{2k}{k+1}}}:=a_{\epsilon}.
\end{equation}
In particular, for any $0<\gamma<1/(k+1)$, we have
\begin{equation}\label{aer_0}
0<\epsilon< a_{\epsilon}<
\epsilon^{\gamma}<r_0<1,
\end{equation}
if $\epsilon>0$ is sufficiently small. Since $w_{\epsilon}=\mathcal{C}\eta v^{*}_{\epsilon}\le 1$ in $(\epsilon^{\gamma}, 1)$, it follows that 
\begin{equation}\nonumber
\begin{aligned}
\int_{\epsilon^{\gamma}}^{1}r^{N-1}|w_{\epsilon}|^{k^{*}+r^{\alpha}}\ud r & \le  \mathcal{C}^{k^*}\int_{\epsilon^{\gamma}}^{1}r^{N-1}|v^{*}_{\epsilon}|^{k^{*}}\ud r.
\end{aligned}
\end{equation}
Hence, Lemma~\ref{lemma-Isharpiii-near1} with $\beta=0$ yields
\begin{equation}\label{exp-var-Lq}
\int_{\epsilon^{\gamma}}^{1}r^{N-1}|w_{\epsilon}|^{k^{*}+r^{\alpha}}\ud r=O\left(\epsilon^{\frac{N}{k}(1-\gamma)}\right)_{\epsilon\searrow 0}.
\end{equation}
Analogously,  we can see that
\begin{equation}\label{exp-Lq}
\int_{\epsilon^{\gamma}}^{1}r^{N-1}|\mathcal{C}v^{*}_{\epsilon}|^{k^{*}}\ud r=O\left(\epsilon^{\frac{N}{k}(1-\gamma)}\right)_{\epsilon\searrow 0}.
\end{equation}
Therefore, this together with \eqref{SSpu} and Lemma~\ref{lemma-Isharpiii-rabo} provides
\begin{equation}\label{sharp1}
\begin{aligned}
\int_{0}^{1}r^{N-1}|w_{\epsilon}|^{k^*}\ud r&=\mathcal{C}^{k^*}\int_{0}^{1}r^{N-1}|v^{*}_{\epsilon}|^{k^*}\ud r+\int_{0}^{1}r^{N-1}(\eta^{k^*}-1)|\mathcal{C}v^{*}_{\epsilon}|^{k^*}\ud r\\
&=\mathcal{C}^{k^*}S^{\frac{N}{2k}}+O\left(\epsilon^{\frac{N}{k}(1-\gamma)}\right)_{\epsilon\searrow 0},
\end{aligned}
\end{equation}
because $\eta\equiv 1$ on $(0,\epsilon^{\gamma})$.
It also follows from \eqref{exp-Lq} and \eqref{sharp1} that
\begin{equation}\label{sharp4}
\int_{0}^{\epsilon^{\gamma}}r^{N-1}|w_{\epsilon}|^{k^*}\ud r=\mathcal{C}^{k^*}S^{\frac{N}{2k}}+O\left(\epsilon^{\frac{N}{k}(1-\gamma)}\right)_{\epsilon\searrow 0}.
\end{equation}
On the other hand, we have $
\mathcal{C}\eta v^{*}_{\epsilon}\le 1$ and $\eta\equiv 1 $ in $(a_{\epsilon}, \epsilon^ {\gamma})$. Then, for any $r\in (a_{\epsilon}, \epsilon^ {\gamma}) $
\begin{equation}\nonumber
\begin{aligned}
1\ge  w_{\epsilon}(r)&\ge A\left(\frac{\epsilon^{\frac{2}{k+1}}}{\epsilon^{2}+\epsilon^{2\gamma}}\right)^{\frac{N-2k}{2k}}=A\epsilon^{\frac{N-2k}{k}\left(\frac{1}{k+1}-\gamma\right)}(1+\epsilon^{2(1-\gamma)})^{-\frac{N-2k}{2k}}
\end{aligned}
\end{equation}
which implies
\begin{equation}\nonumber
\begin{aligned}
0& \ge r^{\alpha}\log |w_{\epsilon}(r)| \ge \epsilon^{\alpha\gamma}\log|w_{\epsilon}(r)|\\
&\ge \frac{N-2k}{k}\left(\frac{1}{k+1}-\gamma\right)\epsilon^{\alpha\gamma}\log\epsilon+\epsilon^{\alpha\gamma}\log\left(A(1+\epsilon^{2(1-\gamma)})^{-\frac{N-2k}{2k}}\right)\\
&=o(1)_{\epsilon\searrow 0},
\end{aligned}
\end{equation}
for any $r\in (a_{\epsilon}, \epsilon^ {\gamma})$.
Hence, by using Taylor's expansion we can write
\begin{equation}\label{Taylor1}
\begin{aligned}
|w_{\epsilon}(r)|^{r^{\alpha}}&=1+\left(\log A-\frac{N-2k}{k+1}\log\epsilon-\frac{N-2k}{2k}\log\left(1+\frac{r^2}{\epsilon^{2}}\right)\right)r^{\alpha}\\
&\;\quad+O\left(\left(\log A-\frac{N-2k}{k+1}\log\epsilon-\frac{N-2k}{2k}\log\left(1+\frac{r^2}{\epsilon^{2}}\right)\right)^{2}r^{2\alpha}\right)_{\epsilon\searrow 0},
\end{aligned} 
\end{equation}
for all $r\in(a_{\epsilon}, \epsilon^{\gamma})$. Thus we have
\begin{equation}\label{sharp2}
\begin{aligned}
&\int_{a_{\epsilon}}^{\epsilon^{\gamma}}r^{N-1}|w_{\epsilon}|^{k^*+r^{\alpha}}\ud r\!=\left(\log A-\frac{N-2k}{k+1}\log\epsilon\right)\int_{a_{\epsilon}}^{\epsilon^{\gamma}}r^{N+\alpha-1}|w_{\epsilon}|^{k^*}\ud r\\
&\;+\int_{a_{\epsilon}}^{\epsilon^{\gamma}}r^{N-1}|w_{\epsilon}|^{k^*}\ud r-\frac{N-2k}{2k}\int_{a_{\epsilon}}^{\epsilon^{\gamma}}r^{N+\alpha-1}\log\left(1+\frac{r^2}{\epsilon^{2}}\right)|w_{\epsilon}|^{k^*}\ud r\\
&\;\;\;+O\left(\int_{a_{\epsilon}}^{\epsilon^{\gamma}}r^{N-1}|w_{\epsilon}|^{k^*}\left(\log A-\frac{N-2k}{k+1}\log\epsilon-\frac{N-2k}{2k}\log\left(1+\frac{r^2}{\epsilon^{2}}\right)\right)^{2}r^{2\alpha}\ud r\right)_{\epsilon\searrow 0}.
\end{aligned}
\end{equation}
Now, our choice \eqref{aer_0} gives $w_{\epsilon}=\mathcal{C}v^{*}_{\epsilon}\ge 1$ on $(0,a_{\epsilon})$ then
\begin{equation}
\begin{aligned}
0 &\le \left(\log A-\frac{N-2k}{k+1}\log \epsilon-\frac{N-2k}{2k}\log\left(1+\frac{r^2}{\epsilon^2}\right)\right)r^{\alpha}\\
& \le \left(\log A-\frac{N-2k}{k+1}\log \epsilon\right)a_{\epsilon}^{\alpha}\\
&=O\left(\epsilon^{\frac{\alpha}{k+1}}(-\log\epsilon)\right)_{\epsilon\searrow 0}=o(1)_{\epsilon\searrow 0},
\end{aligned}
\end{equation}
for any $r\in (0,a_{\epsilon})$. Hence, from Taylor’s expansion again we obtain
\begin{equation}\label{Taylor2}
\begin{aligned}
|w_{\epsilon}|^{r^{\alpha}}&=1+\left(\log A-\frac{N-2k}{k+1}\log \epsilon-\frac{N-2k}{2k}\log\left(1+\frac{r^2}{\epsilon^2}\right)\right)r^{\alpha}\\
&+O\left(\left(\log A-\frac{N-2k}{k+1}\log \epsilon-\frac{N-2k}{2k}\log\left(1+\frac{r^2}{\epsilon^2}\right)\right)^{2}r^{2\alpha}\right)_{\epsilon\searrow 0}.
\end{aligned}
\end{equation}
Therefore,
\begin{equation}\label{sharp3}
\begin{aligned}
&\int_{0}^{a_{\epsilon}}r^{N-1}|w_{\epsilon}|^{k^*+r^{\alpha}}\ud r\!=\left(\log A-\frac{N-2k}{k+1}\log\epsilon\right)\int_{0}^{a_{\epsilon}}r^{N+\alpha-1}|w_{\epsilon}|^{k^*}\ud r\\
&\;+\int_{0}^{a_{\epsilon}}r^{N-1}|w_{\epsilon}|^{k^*}\ud r-\frac{N-2k}{2k}\int_{0}^{a_{\epsilon}}r^{N+\alpha-1}\log\left(1+\frac{r^2}{\epsilon^{2}}\right)|w_{\epsilon}|^{k^*}\ud r\\
&\;\;\;+O\left(\int_{0}^{a_{\epsilon}}r^{N-1}|w_{\epsilon}|^{k^*}\left(\log A-\frac{N-2k}{k+1}\log\epsilon-\frac{N-2k}{2k}\log\left(1+\frac{r^2}{\epsilon^{2}}\right)\right)^{2}r^{2\alpha}\ud r\right)_{\epsilon\searrow 0}.
\end{aligned}
\end{equation}
It follows from \eqref{exp-var-Lq}, \eqref{sharp4} \eqref{sharp2}, and \eqref{sharp3} that
\begin{equation}\label{sharp5}
\begin{aligned}
&\int_{0}^{1}r^{N-1}|w_{\epsilon}|^{k^{*}+r^{\alpha}}\ud r 
=\mathcal{C}^{k^*}S^{\frac{N}{2k}}+O\left(\epsilon^{\frac{N}{k}(1-\gamma)}\right)_{\epsilon\searrow 0}\\
&+\left(\log A-\frac{N-2k}{k+1}\log\epsilon\right)\int_{0}^{\epsilon^{\gamma}}r^{N+\alpha-1}|w_{\epsilon}|^{k^*}\ud r\\
&\;-\frac{N-2k}{2k}\int_{0}^{\epsilon^{\gamma}}r^{N+\alpha-1}|w_{\epsilon}|^{k^*}\log\left(1+\frac{r^2}{\epsilon^{2}}\right)\ud r\\
&\;\;\;+O\left(\int_{0}^{\epsilon^{\gamma}}r^{N+2\alpha-1}|w_{\epsilon}|^{k^*}\left(\log A-\frac{N-2k}{k+1}\log\epsilon-\frac{N-2k}{2k}\log\left(1+\frac{r^2}{\epsilon^{2}}\right)\right)^{2}\ud r\right)_{\epsilon\searrow 0}.
\end{aligned}
\end{equation}
Next, we will estimate all integrals on the right hand side of \eqref{sharp5}.

From Lemma~\ref{lemma-Isharpiii} with for $\delta=0$ and $\alpha=\beta$, we have
\begin{equation}\label{Isharpiii-part1}
\begin{aligned}
&\left(\log A-\frac{N-2k}{k+1}\log\epsilon\right)\int_{0}^{\epsilon^{\gamma}}r^{N+\alpha-1}|w_{\epsilon}|^{k^*}\ud r\\
&=\left\{\begin{aligned}
&\frac{N-2k}{k+1}A^{k^*}\left(\epsilon^{\alpha}|\log\epsilon|\right)\int_{0}^{\infty}\frac{s^{N+\alpha-1}}{(1+s^2)^{\frac{k+1}{2}\frac{N}{k}}}\ud s+o\left(\epsilon^{\alpha}|\log\epsilon|\right)_{\epsilon\searrow 0},&\;\;\mbox{if}\;\;& \alpha<N/k\\
& \frac{N-2k}{k+1}A^{k^*}(1-\gamma)\epsilon^{\frac{N}{k}}(\log\epsilon)^{2}+o\left(\epsilon^{\frac{N}{k}}(\log\epsilon)^{2}\right)_{\epsilon\searrow 0},&\;\;\mbox{if}\;\;& \alpha=N/k\\
&\frac{N-2k}{k+1}A^{k^*}\frac{1}{(\alpha-\frac{N}{k})}\epsilon^{(\alpha-\frac{N}{k})\gamma+\frac{N}{k}}|\log\epsilon|+o\left(\epsilon^{(\alpha-\frac{N}{k})\gamma+\frac{N}{k}}|\log\epsilon|\right)_{\epsilon\searrow 0},&\;\;\mbox{if}\;\;& \alpha>N/k.
\end{aligned}\right.
\end{aligned}
\end{equation}
Similarly, we obtain
\begin{equation}\label{Isharpiii-part2}
\begin{aligned}
&\int_{0}^{\epsilon^{\gamma}}r^{N+\alpha-1}|w_{\epsilon}|^{k^*}\log\left(1+\frac{r^2}{\epsilon^{2}}\right)\ud r\\
&=\left\{\begin{aligned}
&A^{k^*}\epsilon^{\alpha}\int_{0}^{\infty}\frac{s^{N+\alpha-1}\log\left(1+s^2\right)}{(1+s^2)^{\frac{k+1}{2}\frac{N}{k}}}\ud s+o(\epsilon^{\alpha})_{\epsilon\searrow 0},&\;\;\mbox{if}\;\;& \alpha<N/k\\
&A^{k^*}(1-\gamma)^{2}\epsilon^{\frac{N}{k}}(\log\epsilon)^{2}+o(\epsilon^{\frac{N}{k}}(\log\epsilon)^{2})_{\epsilon\searrow 0},&\;\;\mbox{if}\;\;& \alpha=N/k\\
&A^{k^*}\frac{2(1-\gamma)}{\alpha-\frac{N}{k}}\epsilon^{(\alpha-\frac{N}{k})\gamma+\frac{N}{k}}|\log\epsilon|+o(\epsilon^{(\alpha-\frac{N}{k})\gamma+\frac{N}{k}}|\log\epsilon|)_{\epsilon\searrow 0},&\;\;\mbox{if}\;\;& \alpha>N/k.
\end{aligned}\right.
\end{aligned}
\end{equation}
Finally, we also have
\begin{equation}\label{Isharpiii-part3}
\begin{aligned}
&\int_{0}^{\epsilon^{\gamma}}r^{N+2\alpha-1}|w_{\epsilon}|^{k^*}\left(\log A-\frac{N-2k}{k+1}\log\epsilon-\frac{N-2k}{2k}\log\left(1+\frac{r^2}{\epsilon^{2}}\right)\right)^{2}\ud r\\
&=\left\{\begin{aligned}
&O\left(\epsilon^{2\alpha}(\log\epsilon)^2\right)_{\epsilon\searrow 0},&\;\;\mbox{if}\;\;& 2\alpha<N/k\\
&O\left(\epsilon^{\frac{N}{k}}|\log\epsilon|^3\right)_{\epsilon\searrow 0},&\;\;\mbox{if}\;\;& 2\alpha=N/k\\
&O\left(\epsilon^{(2\alpha-\frac{N}{k})\gamma+\frac{N}{k}}(\log\epsilon)^{2}\right)_{\epsilon\searrow 0},&\;\;\mbox{if}\;\;& 2\alpha>N/k.
\end{aligned}\right.
\end{aligned}
\end{equation}
From \eqref{sharp5}, \eqref{Isharpiii-part1}, \eqref{Isharpiii-part2} and  \eqref{Isharpiii-part3}, we have
\begin{equation}\nonumber
\begin{aligned}
\int_{0}^{1}r^{N-1}|w_{\epsilon}|^{k^{*}+r^{\alpha}}\ud r 
&=\mathcal{C}^{k^*}S^{\frac{N}{2k}}+\frac{N-2k}{k+1}A^{k^*}\left(\epsilon^{\alpha}|\log\epsilon|\right)\int_{0}^{\infty}\frac{s^{N+\alpha-1}}{(1+s^2)^{\frac{k+1}{2}\frac{N}{k}}}\ud s\\
&+o(\epsilon^{\alpha}|\log\epsilon|)_{\epsilon\searrow 0}+O\left(\epsilon^{\frac{N}{k}(1-\gamma)}\right)_{\epsilon\searrow 0},
\end{aligned}
\end{equation}
if $\alpha<N/k$. Choosing $\gamma>0$ small enough such that $N(1-\gamma)/k>\alpha$, we obtain \eqref{lemma-estimate} with
$$
\mathcal{C}_{1}=\frac{N-2k}{k+1}A^{k^*}\int_{0}^{\infty}\frac{s^{N+\alpha-1}}{(1+s^2)^{\frac{k+1}{2}\frac{N}{k}}}\ud s
$$
for $\alpha<N/k$. It also follows from \eqref{sharp5}, \eqref{Isharpiii-part1}, \eqref{Isharpiii-part2}, and  \eqref{Isharpiii-part3} that
\begin{equation}\nonumber
\begin{aligned}
\int_{0}^{1}r^{N-1}|w_{\epsilon}|^{k^{*}+r^{\alpha}}\ud r 
&=\mathcal{C}^{k^*}S^{\frac{N}{2k}}+O\left(\epsilon^{\frac{N}{k}(1-\gamma)}\right)_{\epsilon\searrow 0}+O\left(\epsilon^{\frac{N}{k}}(\log\epsilon)^2\right)_{\epsilon\searrow 0}\\
&\;\;+O\left(\epsilon^{(2\alpha-\frac{N}{k})\gamma+\frac{N}{k}}(\log\epsilon)^{2}\right)_{\epsilon\searrow 0}\\
&=\mathcal{C}^{k^*}S^{\frac{N}{2k}}+O\left(\epsilon^{\frac{N}{k}(1-\gamma)}\right)_{\epsilon\searrow 0},
\end{aligned}
\end{equation}
if $\alpha=N/k$. We also have
\begin{equation}\nonumber
\begin{aligned}
\int_{0}^{1}r^{N-1}|w_{\epsilon}|^{k^{*}+r^{\alpha}}\ud r 
&=\mathcal{C}^{k^*}S^{\frac{N}{2k}}+O\left(\epsilon^{\frac{N}{k}(1-\gamma)}\right)_{\epsilon\searrow 0}+O\left(\epsilon^{(\alpha-\frac{N}{k})\gamma+\frac{N}{k}}(-\log\epsilon)\right)_{\epsilon\searrow 0}\\
&\;\;+O\left(\epsilon^{(2\alpha-\frac{N}{k})\gamma+\frac{N}{k}}(\log\epsilon)^{2}\right)_{\epsilon\searrow 0}\\
&=\mathcal{C}^{k^*}S^{\frac{N}{2k}}+O\left(\epsilon^{\frac{N}{k}(1-\gamma)}\right)_{\epsilon\searrow 0},
\end{aligned}
\end{equation}
if $\alpha>N/k$. This proves \eqref{lemma-estimate}.

Now we proceed to the proof of the estimate \eqref{lemma-estimate2-sharp}. Firstly, 
\begin{equation}\label{s-s-first}
\begin{aligned}
\int_{0}^{1}\frac{r^{N-1}}{k^*+r^{\alpha}}|w_{\epsilon}|^{k^{*}+r^{\alpha}}\ud r &=\frac{1}{k^*}\int_{0}^{1}r^{N-1}|w_{\epsilon}|^{k^{*}+r^{\alpha}}\ud r \\
& \quad -\frac{1}{k^*}\int_{0}^{1}\frac{r^{\alpha}}{k^*+r^{\alpha}}r^{N-1}|w_{\epsilon}|^{k^{*}+r^{\alpha}}\ud r.
\end{aligned}
\end{equation}
Since $w_{\epsilon}=\mathcal{C}\eta v^{*}_{\epsilon}\le 1$ in $(\epsilon^{\gamma}, 1)$, we can write
\begin{equation}\nonumber
\begin{aligned}
0\le \int_{\epsilon^{\gamma}}^{1}\frac{r^{\alpha}}{k^*+r^{\alpha}}r^{N-1}|w_{\epsilon}|^{k^{*}+r^{\alpha}}\ud r
\le \frac{\mathcal{C}^{k^*}}{k^*} \int_{\epsilon^{\gamma}}^{1}r^{\alpha+N-1}|v^{*}_{\epsilon}|^{k^*}\ud r.
\end{aligned}
\end{equation}
Thus, Lemma~\ref{lemma-Isharpiii-near1} yields
\begin{equation}\label{sharp-split1}
\begin{aligned}
\int_{\epsilon^{\gamma}}^{1}\frac{r^{\alpha}}{k^*+r^{\alpha}}r^{N-1}|w_{\epsilon}|^{k^{*}+r^{\alpha}}\ud r
=\left\{\begin{aligned}
&O\left(\epsilon^{(1-\gamma)\frac{N}{k}+\alpha\gamma}\right)_{\epsilon\searrow 0},\;\;&\mbox{if}&\;\; \alpha<N/k\\
&O\left(\epsilon^{\frac{N}{k}}(-\log\epsilon)\right)_{\epsilon\searrow 0},\;\;&\mbox{if}&\;\; \alpha=N/k\\
& O\left(\epsilon^{\frac{N}{k}}\right)_{\epsilon\searrow 0},\;\;&\mbox{if}&\;\; \alpha>N/k.\\
\end{aligned}\right.
\end{aligned}
\end{equation}
Similarly to \eqref{sharp2} and \eqref{sharp3},   by using \eqref{Taylor1} and \eqref{Taylor2}, we can show that
\begin{equation}\label{IIsharp5}
\begin{aligned}
&\int_{0}^{\epsilon^{\gamma}}r^{\alpha+N-1}|w_{\epsilon}|^{k^{*}+r^{\alpha}}\ud r =\left(\log A-\frac{N-2k}{k+1}\log\epsilon\right)\int_{0}^{\epsilon^{\gamma}}r^{N+2\alpha-1}|w_{\epsilon}|^{k^*}\ud r\\
&\;+\int_{0}^{\epsilon^{\gamma}}r^{\alpha+N-1}|w_{\epsilon}|^{k^*}\ud r-\frac{N-2k}{2k}\int_{0}^{\epsilon^{\gamma}}r^{N+2\alpha-1}|w_{\epsilon}|^{k^*}\log\left(1+\frac{r^2}{\epsilon^{2}}\right)\ud r\\
&\;\;+O\left(\int_{0}^{\epsilon^{\gamma}}r^{N+3\alpha-1}|w_{\epsilon}|^{k^*}\left(\log A-\frac{N-2k}{k+1}\log\epsilon-\frac{N-2k}{2k}\log\left(1+\frac{r^2}{\epsilon^{2}}\right)\right)^{2}\ud r\right)_{\epsilon\searrow 0}.
\end{aligned}
\end{equation}
Finally, combining \eqref{IIsharp5} with \eqref{Isharpiii}, we can write
\begin{equation}\label{sharp-split2}
\begin{aligned}
0\le \int_{0}^{\epsilon^{\gamma}}\frac{r^{\alpha}}{k^*+r^{\alpha}}r^{N-1}|w_{\epsilon}|^{k^{*}+r^{\alpha}}\ud r&\le \frac{1}{k^*}\int_{0}^{\epsilon^{\gamma}}r^{\alpha+N-1}|w_{\epsilon}|^{k^{*}+r^{\alpha}}\ud r \\
&=\left\{\begin{aligned}
&O\left(\epsilon^{\alpha}\right)_{\epsilon\searrow 0},\;\;&\mbox{if}&\;\; \alpha<N/k\\
&O\left(\epsilon^{\alpha}(-\log\epsilon)\right)_{\epsilon\searrow 0},\;\;&\mbox{if}&\;\; \alpha=N/k\\
& O\left(\epsilon^{\frac{N}{k}(1-\gamma)+\alpha\gamma}\right)_{\epsilon\searrow 0},\;\;&\mbox{if}&\;\; \alpha>N/k.\\
\end{aligned}\right.
\end{aligned}
\end{equation}
It follows from \eqref{sharp-split1} and \eqref{sharp-split2} that
\begin{equation}\label{s-s-final}
\begin{aligned}
\int_{0}^{1}\frac{r^{\alpha}}{k^*+r^{\alpha}}r^{N-1}|w_{\epsilon}|^{k^{*}+r^{\alpha}}\ud r &=\left\{\begin{aligned}
&O\left(\epsilon^{\alpha}\right)_{\epsilon\searrow 0},\;\;&\mbox{if}&\;\; \alpha<N/k\\
&O\left(\epsilon^{\alpha}(-\log\epsilon)\right)_{\epsilon\searrow 0},\;\;&\mbox{if}&\;\; \alpha=N/k\\
& O\left(\epsilon^{\frac{N}{k}}\right)_{\epsilon\searrow 0},\;\;&\mbox{if}&\;\; \alpha>N/k.\\
\end{aligned}\right.
\end{aligned}
\end{equation}
Hence, using \eqref{lemma-estimate}, \eqref{s-s-first} and \eqref{s-s-final}, we can see that \eqref{lemma-estimate2-sharp} holds.
\end{proof}
In the next result we provide the expansion of  $\|w_{\epsilon}\|^{-(k^*+r^{\alpha})}_{X_1}$ in terms of $\epsilon$, for a suitable choice of $\mathcal{C}>0$.
\begin{lemma} If $\mathcal{C}=\left(\omega_{N,k}S^{\frac{N}{2k}}\right)^{-\frac{1}{k+1}}$,  we have
\begin{equation}\label{lemma-estimate2}
\|w_{\epsilon}\|^{-(k^*+r^{\alpha})}_{X_1}=1+O\left(\epsilon^{\frac{N-2k}{k}}\right)_{\epsilon\searrow 0},\;\;\mbox{as}\;\; \epsilon\rightarrow 0
\end{equation}
for all $r\in (0,1)$.
\end{lemma}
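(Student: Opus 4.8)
The plan is to read off \eqref{lemma-estimate2} from the energy expansion in Lemma~\ref{lemmaJDE}(a), together with the elementary fact that a uniformly bounded power of $1+O(\epsilon^{(N-2k)/k})$ is again $1+O(\epsilon^{(N-2k)/k})$. So the first thing I would do is compute $\|w_\epsilon\|_{X_1}$ itself. Since $w_\epsilon=\mathcal{C}\eta v^*_\epsilon\in X_1$ (as $\eta\in C_0^\infty(0,1)$ and $v^*_\epsilon$ is smooth), the definition \eqref{fullnorm} of the norm gives $\|w_\epsilon\|_{X_1}^{k+1}=\omega_{N,k}\mathcal{C}^{k+1}\int_0^1 r^{N-k}|(\eta v^*_\epsilon)'|^{k+1}\,\ud r$, and with the normalisation $\mathcal{C}^{k+1}=(\omega_{N,k}S^{N/(2k)})^{-1}$ this equals $S^{-N/(2k)}\int_0^1 r^{N-k}|(\eta v^*_\epsilon)'|^{k+1}\,\ud r$. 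Inserting the asymptotics of Lemma~\ref{lemmaJDE}(a) yields $\|w_\epsilon\|_{X_1}^{k+1}=1+O(\epsilon^{(N-2k)/k})$, and since $x\mapsto x^{1/(k+1)}$ is smooth near $x=1$, taking $(k+1)$-th roots produces $\|w_\epsilon\|_{X_1}=1+t_\epsilon$ with $t_\epsilon=O(\epsilon^{(N-2k)/k})$ as $\epsilon\searrow 0$.

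The second step is to pass from $\|w_\epsilon\|_{X_1}$ to $\|w_\epsilon\|_{X_1}^{-(k^*+r^\alpha)}$, and to do so uniformly in $r$. Here I would use that for $r\in(0,1)$ and $\alpha>0$ we have $0<r^\alpha<1$, so the exponent $k^*+r^\alpha$ lies in the fixed compact interval $(k^*,k^*+1)$, independently of $r$ and of $\epsilon$. For $\epsilon$ small enough, $|t_\epsilon|\le 1/2$, and a first-order Taylor expansion of $s\mapsto(1+s)^{-(k^*+r^\alpha)}$ at $s=0$ (equivalently, the mean value theorem on $[0,t_\epsilon]$) provides a constant $C=C(N,k)$, not depending on $r$, such that $\bigl|(1+t_\epsilon)^{-(k^*+r^\alpha)}-1\bigr|\le C|t_\epsilon|$. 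Combining this with $t_\epsilon=O(\epsilon^{(N-2k)/k})$ gives $\|w_\epsilon\|_{X_1}^{-(k^*+r^\alpha)}=1+O(\epsilon^{(N-2k)/k})$ uniformly in $r\in(0,1)$, which is precisely \eqref{lemma-estimate2}.

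I do not expect a genuine obstacle here. The single point deserving attention is the uniformity in $r$, and this is handed to us by the crude bound $k^*+r^\alpha\le k^*+1$ valid throughout $(0,1)$, which lets the implied constant in the Taylor/mean-value estimate be chosen independently of $r$. Thus the lemma is essentially a bookkeeping consequence of the energy expansion in Lemma~\ref{lemmaJDE}(a), that expansion being the only nontrivial ingredient.
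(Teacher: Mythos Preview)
Your proposal is correct and follows essentially the same route as the paper: both arguments feed Lemma~\ref{lemmaJDE}(a) through the normalisation $\mathcal{C}^{k+1}=(\omega_{N,k}S^{N/(2k)})^{-1}$ to obtain $\|w_\epsilon\|_{X_1}=1+O(\epsilon^{(N-2k)/k})$, and then use the uniform bound $k^*<k^*+r^\alpha<k^*+1$ together with a first-order expansion to transfer the $O(\epsilon^{(N-2k)/k})$ to the power $\|w_\epsilon\|_{X_1}^{\pm(k^*+r^\alpha)}$. The paper does the last step via a two-sided sandwich $(1\pm\delta\epsilon^{(N-2k)/k})^{k^*+1}$ while you invoke the mean value theorem, but these are the same elementary observation.
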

\begin{proof}
By Lemma~\ref{lemmaJDE}, it follows that 
\begin{equation}\label{we-grad}
\|w_{\epsilon}\|_{X_1}=1+O(\epsilon^{\frac{N-2k}{k}})_{\epsilon\searrow 0}.
\end{equation}
Hence, there exists $\delta>0$ such that
\begin{equation}\nonumber
0<1-\delta\epsilon^{\frac{N-2k}{k}}\le \|w_{\epsilon}\|_{X_1}\le 1+\delta\epsilon^{\frac{N-2k}{k}}
\end{equation}
for small enough $\epsilon>0$. Hence, for $r\in (0,1)$ there holds 
$$
\|w_{\epsilon}\|^{k^*+r^{\alpha}}_{X_1}\le 
(1+\delta\epsilon^{\frac{N-2k}{k}})^{k^*+r^{\alpha}}\le (1+\delta\epsilon^{\frac{N-2k}{k}})^{k^*+1}\le 1+\delta_{1}\epsilon^{\frac{N-2k}{k}},
$$
for some constant $\delta_1>0$. Analogously, 
$$
\|w_{\epsilon}\|^{k^*+r^{\alpha}}_{X_1}\ge 
(1-\delta\epsilon^{\frac{N-2k}{k}})^{k^*+r^{\alpha}}\ge (1-\delta\epsilon^{\frac{N-2k}{k}})^{k^*+1}\ge 1-\delta_{2}\epsilon^{\frac{N-2k}{k}},
$$
for some constant $\delta_{2}>0$. Consequently,
$$
\|w_{\epsilon}\|^{k^*+r^{\alpha}}_{X_1}=1+O\left(\epsilon^{\frac{N-2k}{k}}\right)_{\epsilon\searrow 0}
$$
and 
$$
\|w_{\epsilon}\|^{-(k^*+r^{\alpha})}_{X_1}=1+O\left(\epsilon^{\frac{N-2k}{k}}\right)_{\epsilon\searrow 0}.
$$
\end{proof}
We are now in a position to complete the proof of  Step~1. Let $(w_{\epsilon})$ be given by \eqref{we} with the choice $\mathcal{C}=\left(\omega_{N,k}S^{\frac{N}{2k}}\right)^{-\frac{1}{k+1}}$. From \eqref{lemma-estimate} and \eqref{lemma-estimate2} together with Remark~\ref{relation-VS}, we obtain
\begin{equation}\nonumber
\begin{aligned}
\mathcal{V}_{k,N,\alpha}& = \sup_{\|v\|_{X_1}=1}\int_{0}^{1}r^{N-1}|v|^{k^{*}+r^{\alpha}}\mathrm{d}r\\
& \ge \int_{0}^{1}r^{N-1}\left|\frac{w_{\epsilon}}{\|w_{\epsilon}\|_{X_1}}\right|^{k^{*}+r^{\alpha}}\mathrm{d}r\\
&= \int_{0}^{1}r^{N-1}|w_{\epsilon}|^{k^*+r^{\alpha}}\ud r + O\left(\epsilon^{\frac{N-2k}{k}}\right)_{\epsilon\searrow 0}\\
&=\mathcal{V}_{k,N}+\epsilon^{\alpha}|\log\epsilon|\left[
 \mathcal{C}_{1}+O\left(\frac{\epsilon^{\frac{N-2k}{k}}}{\epsilon^{\alpha}|\log\epsilon|}\right)_{\epsilon\searrow 0}+o\left(1\right)_{\epsilon\searrow 0}\right]\\
& > \mathcal{V}_{k,N},
\end{aligned}
\end{equation}
for $0<\alpha<(N-2k)/k$ and $\epsilon>0$ small enough.
\subsection{Proof of Step 2}
Let $(v_n)\subset X_1$ be a normalized concentrating sequence at the origin.  It is sufficient to show that, for each $\epsilon>0$,  there are $\delta>0$ and $n_0\in\mathbb{N}$ satisfying
\begin{enumerate}
\item [(i)] $\displaystyle\int_{0}^{\delta}r^{N-1}|v_n|^{k^*+r^{\alpha}}\ud r\le \mathcal{V}_{k,N}+\frac{\epsilon}{2}$,\;\; $\forall \; n\ge n_0$\\

\item [(ii)] $\displaystyle\int_{\delta}^{1}r^{N-1}|v_n|^{k^*+r^{\alpha}}\ud r\le \displaystyle\frac{\epsilon}{2}$, \;\; $\forall \; n\ge n_0$. \\
\end{enumerate}
From Lemma~\ref{r-typeLemma} (see also, \eqref{radial-lemma}), we obtain $C>1$ such that
\begin{equation}\label{Xradial-lemma}
|v_n(r)|\le C r^{\frac{2k-N}{k+1}}, \;\; \forall\; 0<r\le 1.
\end{equation}
In addition, we clearly have
\begin{equation}\nonumber
\lim_{r\rightarrow 0^+}r^{\alpha}\log\left(Cr^{\frac{2k-N}{k+1}}\right)\searrow 0\quad\quad\mbox{and}\quad\quad\lim_{s\rightarrow 0}\frac{\mathrm{e}^{s}-1}{s}=1.
\end{equation}
Hence, we conclude that
\begin{equation}\label{r-contratating-0}
\begin{aligned}
\int_{0}^{\delta}r^{N-1}|v_{n}|^{k^*}\left(|v_{n}|^{r^{\alpha}}-1\right)\ud r &\le \int_{0}^{\delta}r^{N-1}|v_{n}|^{k^*}\left[\exp\left(r^{\alpha}\log\left(Cr^{\frac{2k-N}{k+1}}\right)\right)-1\right]\ud r\\
&\le  C \int_{0}^{\delta}r^{N-1}|v_{n}|^{k^*}r^{\alpha}\left|\log\left(Cr^{\frac{2k-N}{k+1}}\right)\right|\ud r\\
&\le  C_1\delta^{\alpha}\left|\log\left(C\delta^{\frac{2k-N}{k+1}}\right)\right| \int_{0}^{\delta}r^{N-1}|v_{n}|^{k^*}\ud r\\
&\le  C_1\delta^{\alpha}\left|\log\left(C\delta^{\frac{2k-N}{k+1}}\right)\right| \mathcal{V}_{k,N},
\end{aligned}
\end{equation}
by choosing a small enough $\delta>0$. Hence, taking some $\delta=\delta(\alpha, \epsilon,k,N)>0$ small enough such that
\begin{equation}\nonumber
 C_1\delta^{\alpha}\left|\log\left(C\delta^{\frac{2k-N}{k+1}}\right)\right| \mathcal{V}_{k,N}\le \frac{\epsilon}{2}, 
\end{equation} 
we obtain
\begin{equation}\nonumber
\begin{aligned}
\int_{0}^{\delta}r^{N-1}|v_{n}|^{k^*+r^{\alpha}}\ud r & =\int_{0}^{\delta}r^{N-1}|v_{n}|^{k^*}\ud r+\int_{0}^{\delta}r^{N-1}|v_{n}|^{k^*}\left(|v_{n}|^{r^{\alpha}}-1\right)\ud r \\
&\le \mathcal{V}_{k,N}+\frac{\epsilon}{2},
\end{aligned}
\end{equation}
which proves $\mathrm{(i)}$.

As in the proof of  Lemma~\ref{r-typeLemma}, for all $r\in (\delta, 1)$, we obtain
\begin{equation}\nonumber
\begin{aligned}
|v_n(r)|&\le  \int_{r}^{1}|v_n^{\prime}(s)|\ud s = \int_{r}^{1}s^{\frac{N-k}{k+1}}|v_n^{\prime}(s)|s^{-\frac{N-k}{k+1}}\ud s\\
&\le  \left(\int_{\delta}^{1}s^{N-k}|v_n^{\prime}|^{k+1}\ud s\right)^{\frac{1}{k+1}} \left(\int_{r}^{1}s^{-\frac{N-k}{k}}\ud s\right)^{\frac{k}{k+1}}\\
&\le \delta_{n}\frac{1}{r^{\frac{N-2k}{k+1}}},
\end{aligned}
\end{equation}
where
\begin{equation}\nonumber
\delta_{n}=C\left(\int_{\delta}^{1}s^{N-k}|v_n^{\prime}|^{k+1}\ud s\right)^{\frac{1}{k+1}},
\end{equation}
for some $C=C(k,N)$. Since $(v_n)$ is a concentrating sequence at the origin, we have 
$$
\lim _{n\rightarrow\infty}\delta_{n}= 0.
$$
It follows that
\begin{equation}\label{r-concentrating-1}
\begin{aligned}
\int_{\delta}^{1}r^{N-1}|v_{n}|^{k^*+r^{\alpha}}\ud r & \le \int_{\delta}^{1}r^{N-1}\left(\frac{\delta_n}{r^{\frac{N-2k}{k+1}}}\right)^{k^*+r^{\alpha}}\ud r\\
&\le \delta^{k^*}_{n}\int_{\delta}^{1}r^{N-1}\left(\frac{1}{r^{\frac{N-2k}{k+1}}}\right)^{k^*+r^{\alpha}}\ud r\\
&=\delta^{k^*}_{n}C(\delta)\le \frac{\epsilon}{2},
\end{aligned}
\end{equation}
for sufficiently large $n$. This proves $\mathrm{(ii)}$, and consequently, Step~2 holds.
\subsection{Proof of Step 3} Suppose that the supremum $\mathcal{V}_{k,N,\alpha}$ is not attained. Then we are  going to show that every sequence $(v_n)\subset X_1$  satisfying
\begin{equation}\label{max-condiction}
\|v_n\|_{X_1}=1\;\;\;\mbox{and}\;\; \lim_{n\rightarrow\infty}\int_{0}^{1}r^{N-1}|v_n|^{k^{*}+r^{\alpha}}\mathrm{d} r=\mathcal{V}_{k,N,\alpha}
\end{equation}
is necessarily concentrated at the origin in $X_1$. From the compact embedding \eqref{Ebeddings}, up to a subsequence,  we can assume that there exists $v\in X_1$ such that
\begin{equation}\label{three-converg}
v_n\rightharpoonup v\;\;\;\mbox{weakly in}\;\;\; X_1, \; \;\; v_n\rightarrow v\;\;\mbox{in}\;\; L^{q}_{N-1},\; (q<k^*)\;\;\mbox{and}\;\; v_n(r)\rightarrow v(r)\;\;\mbox{a.e}\;\; \mbox{in}\;\; (0,1).
\end{equation}
Let us  denote by $X_1([r_0, 1])$ the space $X_1$ on the interval $[r_0,1]$ instead of $(0,1]$. We claim that the embedding
\begin{equation}\label{emb-rabo}
X_1([r_0, 1]) \hookrightarrow L^q_{N-1}[r_0,1] 
\end{equation}
is compact for any $q\ge k+1$.  To prove \eqref{emb-rabo}, we consider the operator $H: L^{k+1}_{N-k}[r_0,1]\rightarrow L^q_{N-1}[r_0,1]$ defined by $$ H(f)(r)= \int_{r}^{1}f(s)\ud s.$$
Using \cite[Theorem~7.4]{Opic}, for $q\ge k+1$, the operator $H$ is compact if and only if the following  assert holds:
\begin{equation}\label{three-cond}
\left|
\begin{aligned}
&\sup_{r\in (r_0, 1)}F(r)<\infty\\
&\lim_{r\rightarrow r_{0}^{+}}F(r)=0\\
&\lim_{r\rightarrow 1^{-}}F(r)=0,
\end{aligned}
\right.
\end{equation}
where
\begin{equation}\nonumber
F(r)=\left(\int_{r_0}^{r}s^{N-1}\mathrm{d}s\right)^{\frac{1}{q}}\left(\int_{r}^{1}s^{-\frac{N-k}{k}}\mathrm{d}s\right)^{\frac{k}{k+1}}.
\end{equation}
It is easy to see that \eqref{three-cond}  holds. In addition,  the embedding \eqref{emb-rabo} can be seen as the composition $H\circ T$, where  $$T:X_1([r_0, 1])\rightarrow L^{k+1}_{N-k}[r_0,1],\;\;\;\;Tv=-v^{\prime}.$$ Since $T$ is a continuous operator, we conclude the embedding \eqref{emb-rabo} is compact.

Fix $q>k+1$ and $r_0\in (0,1)$. From \eqref{Xradial-lemma}, there is $c_0>0$ depending only on $r_0, q, k,$ and $N$ such that
$$
\sup_{r\in [r_0,1]}|v_n(r)|^{(k^*+r^{\alpha}-1)\frac{q}{q-1}}\le c_0.
$$
Hence, the embedding \eqref{emb-rabo} together with the H\"{o}lder inequality gives
\begin{equation}\label{concentrationS}
\begin{aligned}
\int_{r_0}^{1}r^{N-1}|v_{{n}}|^{k^{*}+r^{\alpha}-1}|v_n-v|\,\mathrm{d}r
&\le c_0^{\frac{q-1}{q}} \left(\int_{r_0}^{1}r^{N-1}|v_{{n}}-v|^{q}\ud r\right)^{\frac{1}{q}}\rightarrow 0.
\end{aligned}
\end{equation}
From the Ekeland's principle \cite[Theorem~3.1]{Ekeland} (cf.\eqref{max-condiction}), we can assume that
\begin{equation}\label{Euler mult}
\lambda_{n}\left(\omega_{N,k}\int_{0}^{1}r^{N-k}|v^{\prime}_n|^{k-1}v_n^{\prime}w^{\prime}\ud r\right) = \int_{0}^{1}r^{N-1}(k^{*}+r^{\alpha})|v_n|^{k^{*}-2+r^{\alpha}}v_n w \ud r+\langle o(1),w \rangle
\end{equation}
for some multiplier $\lambda_n$. Choosing $w=v_n$ one has
\begin{equation}\nonumber
\lambda_{n}
 \ge  k^{*}\int_{0}^{1}r^{N-1}|v_n|^{k^{*}+r^{\alpha}}\ud r+\langle o(1),v_n \rangle,
\end{equation}
and consequently
\begin{equation}\label{lagrange>0}
\liminf_{n} \lambda_n\ge  k^{*}\mathcal{V}_{k,N,\alpha}>0.
\end{equation}
Let $\eta$ be a smooth cut-off function satisfying
\begin{equation}\label{eta-function}
\eta (r)=\left\{\begin{aligned}
&0, \quad\mbox{if}\;\; r\in [0, r_0/2]\\
&1, \quad\mbox{if}\;\; r\in [r_0,1]
\end{aligned}\right..
\end{equation}
Thus, using $w=\eta (v_n-v)$ in \eqref{Euler mult},  \eqref{concentrationS} and \eqref{lagrange>0} yield
\begin{equation}\nonumber
\begin{aligned}
\int_{r_0/2}^{1}r^{N-k}|v^{\prime}_n|^{k-1}v^{\prime}_n(\eta (v_{n}-v))^{\prime}\ud r 
= o(1).
\end{aligned}
\end{equation}
Also, the compact embedding \eqref{Ebeddings} (cf. \eqref{three-converg}) gives
\begin{equation}\nonumber
\int_{r_0}^{1}r^{N-k}|v_n-v|^{k+1} \ud r\le \frac{1}{r^{k+1}_0} \int_{r_0}^{1}r^{N-1}|v_n-v|^{k+1}\ud  r\rightarrow 0.
\end{equation}
Consequently, we get
\begin{equation}\nonumber
\begin{aligned}
o(1)&=\left|\int_{r_0/2}^{1}r^{N-k}|v^{\prime}_n|^{k-1}v^{\prime}_n(\eta (v_{n}-v))^{\prime}\ud r \right|&\\
 &\ge \left|\int_{r_0/2}^{1}r^{N-k}\eta |v^{\prime}_n|^{k-1}v^{\prime}_{n}(v_n-v)^{\prime}\ud r\right|-\left|\int_{r_0/2}^{1}r^{N-k}|v^{\prime}_n|^{k-1}v^{\prime}_n (v_n-v)\eta^{\prime} \ud r\right|\\
&\ge \left|\int_{r_0/2}^{1}r^{N-k}\eta |v^{\prime}_n|^{k-1}v^{\prime}_{n}(v_n-v)^{\prime}\ud r\right|-C\|\eta^{\prime}\|_{\infty}\|v_n\|^{k}_{X_1}\left(\int_{r_0}^{1}r^{N-k}|v_n-v|^{k+1} \ud r\right)^{\frac{1}{k+1}}\\
&= \left|\int_{r_0/2}^{1}r^{N-k}\eta |v^{\prime}_n|^{k-1}v^{\prime}_{n}(v_n-v)^{\prime}\ud r\right|+o(1),
\end{aligned}
\end{equation}
for some constant $C(N,k)>0$. Hence,
\begin{equation}\label{concentrate-quasi}
\int_{r_0/2}^{1}r^{N-k}\eta |v^{\prime}_n|^{k-1}v^{\prime}_{n}(v_n-v)^{\prime}\ud r=o(1).
\end{equation}
In addition, in view of the weak convergence in \eqref{three-converg}, one has
\begin{equation}\label{wear-convergence-mode}
\int_{0}^{1}r^{N-k}\eta|v^{\prime}|^{k-1}v^{\prime}(v_n-v)^{\prime}\ud r=o(1).
\end{equation}
By combining \eqref{concentrate-quasi} and \eqref{wear-convergence-mode} with the elementary inequality
\begin{equation}\nonumber
2^{2-p}|b-a|^{p}\le \left(|b|^{p-2}b-|a|^{p-2}a\right)(b-a), \;\; p\ge 2,\;\; a, b\in \mathbb{R}
\end{equation}
one has
\begin{equation}\label{concentrate-completed}
\int_{r_0}^{1}r^{N-k}|v^{\prime}_n-v^{\prime}|^{k+1}\ud r\le \int_{r_0/2}^{1}r^{N-k}\eta|v^{\prime}_n-v^{\prime}|^{k+1}\ud r\rightarrow 0.
\end{equation}
Since $r_0\in (0,1)$ is arbitrary, up to a subsequence,  we have $v^{\prime}_{n}(r)\rightarrow v^{\prime}(r)$ a.e. in $(0,1)$. Hence,  by Brezis-Lieb type argument (cf.\cite{BL}), we can write
\begin{equation}\label{bl1}
\int_{0}^{1} r^{N-1}|v_n|^{k^{*}+r^{\alpha}}\ud r = \int_{0}^{1} r^{N-1}|v_n-v|^{k^{*}+r^{\alpha}}\ud r+\int_{0}^{1} r^{N-1}|v|^{k^{*}+r^{\alpha}}\ud r +o(1)
\end{equation}
and 
\begin{equation}\label{bl2}
1=\|v_n\|^{k+1}_{X_1}=\|v_n-v\|^{k+1}_{X_1}+\|v\|^{k+1}_{X_1} +o(1).
\end{equation}
Of course, we have $\|v\|_{X_1}\le 1$. We claim that $v=0$ in $X_1$. Arguing by contradiction, we suppose that
\begin{equation}\label{contra-Hyp}
\int_{0}^{1}r^{N-k}|v^{\prime}|^{k+1}\ud r>0.
\end{equation}
If $\|v\|_{X_1}=1$, from \eqref{bl2} we obtain $v_n\rightarrow v$ strongly in $X_1$. In this case, we will prove that $v$ is a maximizer  of $\mathcal{V}_{k,N,\alpha}$, which contradicts our assumption. Indeed, from \eqref{max-condiction} and \eqref{bl1}, it is sufficient to show that
\begin{equation} \label{case1-att}
\limsup_{n}\int_{0}^{1} r^{N-1}|v_n-v|^{k^{*}+r^{\alpha}}\ud r=0.
\end{equation}
By choosing $n$ large enough such that $\|v_n-v\|_{X_1}<1$, \eqref{supremumX} yields
\begin{equation}\nonumber
\frac{1}{\|v_n-v\|^{k^*}_{X_1}}\int_{0}^{1}r^{N-1}|v_n-v|^{k^*+r^{\alpha}}\ud r\le\int_{0}^{1}r^{N-1}\left|\frac{v_n-v}{\|v_n-v\|_{X_1}}\right|^{k^*+r^{\alpha}}\ud r\le  \mathcal{V}_{k,N,\alpha},
\end{equation}
which gives \eqref{case1-att}.

Hence, we can assume $\|v\|_{X_1}<1$. Setting $w_{n}=v_n-v$ and  using \eqref{contra-Hyp} and \eqref{bl2}, we have $\|w_n\|_{X_1}<1$. Hence, \eqref{max-condiction}, \eqref{bl1} and \eqref{bl2} imply
\begin{equation}\nonumber
\begin{aligned}
 \mathcal{V}_{k,N,\alpha}&=\int_{0}^{1} r^{N-1}|w_n|^{k^{*}+r^{\alpha}}\ud r+\int_{0}^{1} r^{N-1}|v|^{k^{*}+r^{\alpha}}\ud r +o(1)\\
&=\int_{0}^{1} r^{N-1}\left|\frac{w_n}{\|w_n\|_{X_1}}\right|^{k^{*}+r^{\alpha}}\|w_n\|_{X_1}^{k^{*}+r^{\alpha}}\ud r\\
&+\int_{0}^{1} r^{N-1}\left|\frac{v}{\|v\|_{X_1}}\right|^{k^{*}+r^{\alpha}}\|v\|_{X_1}^{k^{*}+r^{\alpha}}\ud r +o(1)\\ 
&\le \mathcal{V}_{k,N,\alpha}\left(\|w_n\|_{X_1}^{k^{*}} +\|v\|_{X_1}^{k^{*}}\right)+o(1)\\ 
&= \mathcal{V}_{k,N,\alpha}\left(\left(1-\|v\|^{k+1}_{X_1}+o(1)\right)^{\frac{k^*}{k+1}}+(\|v\|^{k+1}_{X_1})^{\frac{k^*}{k+1}}\right)+o(1)\\
&<\mathcal{V}_{k,N,\alpha},
\end{aligned}
\end{equation}
where we have still used $(1-t)^{k^*/(k+1)}+t^{k^*/(k+1)}<1$, for all $0<t<1$. This contradiction forces $v\equiv 0$ in $X_1$. 

In order to complete the proof of Step~3, is now sufficient to show that $(v_n)$ satisfies the condition
\begin{equation}\label{con-remains}
\int_{r_0}^{1}r^{N-k}|v^{\prime}_{{n}}|^{k+1}\,\mathrm{d}r\rightarrow0,\;\forall\;
r_0>0
\end{equation}
but it is an immediate  consequence of \eqref{concentrate-completed} since  we have proved $v\equiv0$. 
\section{Existence of $k$-admissible Extremals: Proof of Theorem~\ref{thm2}}
\label{sec4}
In order to ensure the existence of an extremal function for the supremum \eqref{max-problem}, we will use the maximizer of  the auxiliary problem \eqref{supremumX}, which is ensured by the Proposition~\ref{prop1}.

Let $v_0\in X_1$ be a maximizer of $\mathcal{V}_{k,N,\alpha}$, which we can assume $v_0\ge 0$  since $|v_0|$ is  still a maximizer. We set
\begin{equation}\nonumber
 u_0(x)=-v_0(|x|), \;\; x\in B.
\end{equation} 
From  \eqref{FX} and \eqref{NormX}, we get
\begin{equation}\label{FX1}
\int_{B}|u_0|^{k^{*}+|x|^{\alpha}}\mathrm{d}x=\omega_{N-1}\int_{0}^{1}r^{N-1}|v_0|^{k^{*}+r^{\alpha}}\mathrm{d}r=\omega_{N-1}\mathcal{V}_{k,N,\alpha}\ge \mathcal{U}_{k,N,\alpha},
\end{equation}
and
\begin{equation}\label{NormX1}
\|u_0\|_{\Phi^{k}_0}=\|v_0\|_{X_1}=1.
\end{equation}
Hence, it is sufficient to show that $u_0$ belongs to $\Phi^{k}_{0,\mathrm{rad}}(B)$.  To show $u_0\in C^{2}(B)$ or equivalently $v_0\in C^{2}[0,1]$, in  the same way as in \cite{MR1051232}, we will adapt the classical De Giorgi-Nash-Moser estimate  to our framework. 

\begin{lemma}\label{Lemma-regular} If $v\in X_1$ is a maximizer of $\mathcal{V}_{k,N,\alpha}$, then
$\sup_{r\in(0,1]}|v(r)|<+\infty$.
\end{lemma}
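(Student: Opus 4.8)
The plan is to combine the Euler--Lagrange equation satisfied by the maximizer with a Moser iteration carried out in the weighted space $X_1$, in the spirit of Tso's treatment of the real Monge--Amp\`ere functional in \cite{MR1051232}.

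\textbf{Step 1: the equation, and reduction to the critical power.} Since $|v|$ is again a maximizer, assume $v\ge 0$; note $v$ is continuous on $(0,1]$ with $v(1)=0$. By the Lagrange multiplier rule, exactly as in \eqref{Euler mult} but now for the maximizer itself, there is $\lambda>0$ with
\begin{equation}\nonumber
\lambda\,\omega_{N,k}\int_{0}^{1}r^{N-k}|v'|^{k-1}v'\,\varphi'\,\ud r=\int_{0}^{1}r^{N-1}(k^{*}+r^{\alpha})\,v^{k^{*}-1+r^{\alpha}}\,\varphi\,\ud r ,\qquad\forall\,\varphi\in X_1 ,
\end{equation}
and $\varphi=v$ forces $\lambda\ge k^{*}\mathcal{V}_{k,N,\alpha}>0$. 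From Lemma~\ref{r-typeLemma}, $v(r)\le C\,r^{-(N-2k)/(k+1)}$, so $r^{\alpha}\log v(r)$ stays bounded on $(0,1)$ (near $r=0$ because $r^{\alpha}|\log r|\to0$, as in Lemma~\ref{super-bounded}); hence there is a universal $c_{0}\ge1$ with $v^{r^{\alpha}}\le c_{0}$, and therefore $v^{k^{*}-1+r^{\alpha}}\le c_{0}\,v^{k^{*}-1}$ on $(0,1)$. Thus the right-hand side of the equation is dominated, up to a constant, by the \emph{critical} power $v^{k^{*}-1}$.

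\textbf{Step 2: truncated energy estimate and iteration.} For $L>0$ set $v_{L}=\min\{v,L\}$ and, for $\beta\ge0$, test the equation with $\varphi=v\,v_{L}^{(k+1)\beta}\in X_1$ (admissible since $v_{L}\le L$ and $v\in X_1$). Using $v'v_{L}'\ge0$ one gets $|v'|^{k-1}v'\varphi'\ge|v'|^{k+1}v_{L}^{(k+1)\beta}$, while $w_{L}:=v\,v_{L}^{\beta}$ satisfies $|w_{L}'|\le(1+\beta)v_{L}^{\beta}|v'|$; together with Step~1 and the Sobolev embedding \eqref{Ebeddings} applied to $w_{L}$, this yields
\begin{equation}\nonumber
\|w_{L}\|_{L^{k^{*}}_{N-1}}^{k+1}\ \le\ C_{1}(1+\beta)^{k+1}\int_{0}^{1}r^{N-1}\,v^{k^{*}-(k+1)}\,w_{L}^{\,k+1}\,\ud r ,\qquad C_{1}=C_{1}(N,k,\alpha),
\end{equation}
recalling $k^{*}>k+1$. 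Splitting the integral over $\{v\le M\}$ and $\{v>M\}$, the first part is $\le M^{k^{*}-(k+1)}\|w_{L}\|_{L^{k+1}_{N-1}}^{k+1}$ and H\"older bounds the second by $\big(\int_{\{v>M\}}r^{N-1}v^{k^{*}}\,\ud r\big)^{1-\frac{k+1}{k^{*}}}\|w_{L}\|_{L^{k^{*}}_{N-1}}^{k+1}$, whose prefactor tends to $0$ as $M\to\infty$ since $v\in L^{k^{*}}_{N-1}$. With $\beta=\beta_{0}$ small and $M$ large but fixed, this first step (absorb, then $L\to\infty$ by monotone convergence) improves $v\in L^{k^{*}}_{N-1}$ to $v\in L^{q_{0}}_{N-1}$ for some $q_{0}>k^{*}$. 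Once $v\in L^{q}_{N-1}$ with $q>k^{*}$, the coefficient $v^{k^{*}-(k+1)}$ sits in a \emph{strictly subcritical} weighted Lebesgue space, so a standard Moser iteration (constants growing only polynomially in $\beta$, exponents $q_{j}\to\infty$ geometrically) gives $v\in L^{q}_{N-1}$ for every $q<\infty$ with $\sup_{j}\|v\|_{L^{q_{j}}_{N-1}}=:K<\infty$ controlled along the iteration. Since $r^{N-1}\,\ud r$ is a finite measure on $(0,1)$ and $v$ is continuous there, this forces $\sup_{(0,1]}|v|\le K<\infty$.

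\textbf{Expected main obstacle.} The hard part is entirely the critical (and, because of the factor $r^{\alpha}$, formally slightly supercritical) scaling of the nonlinearity, which prevents a naive Moser iteration from closing. Two ingredients resolve this: the logarithmic control $r^{\alpha}|\log r|\to0$ from Lemma~\ref{super-bounded}, which downgrades the right-hand side to the genuinely critical power $v^{k^{*}-1}$ (Step~1), and the Brezis--Kato truncation together with the absolute continuity $\int_{\{v>M\}}r^{N-1}v^{k^{*}}\,\ud r\to0$ as $M\to\infty$ (Step~2), which lets one absorb the critical part of the right-hand side and thereby ignite the iteration; the remaining bookkeeping of the iteration constants is routine.
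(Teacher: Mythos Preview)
Your proof is correct, but the route differs from the paper's in how the variable-exponent nonlinearity is handled. You first observe, via the radial estimate of Lemma~\ref{r-typeLemma}, that $v^{r^{\alpha}}$ is uniformly bounded, thereby reducing the right-hand side to the exactly critical power $v^{k^{*}-1}$; this then forces a Brezis--Kato absorption step (splitting over $\{v\le M\}$ and $\{v>M\}$) to push past the critical threshold and start the Moser iteration. The paper, by contrast, keeps the full supercritical term and instead invokes Lemma~\ref{super-bounded} with $f(r)=\tfrac{p}{p-k-1}r^{\alpha}$ to bound the ``potential'' $(v^{+}+1)^{k^{*}-k-2+r^{\alpha}}$ directly in a weighted Lebesgue space; choosing $p<q<k^{*}$ with $(k^{*}-k-2)p<k^{*}(p-k-1)$ lets the iteration start strictly subcritically, so no absorption argument is needed. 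Your approach trades the technical Lemma~\ref{super-bounded} for the more classical Brezis--Kato device, while the paper's approach is more self-contained within its own framework; both yield the same $L^{\infty}$ bound with comparable effort. A minor remark: your constraint on $\beta_{0}$ is not smallness per se but rather $(k+1)(1+\beta_{0})\le k^{*}$, so that $\|w_{L}\|_{L^{k+1}_{N-1}}$ stays bounded as $L\to\infty$.
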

\begin{proof}
The Lagrange multipliers theorem yields 
\begin{equation}\label{weaksolution}
\int_{0}^{1}r^{N-k}|v^{\prime}|^{k-1}v^{\prime}h^{\prime}\,\mathrm{d}
r=\lambda \int_{0}^{1}r^{N-1}(k^*+r^{\alpha})|v|^{k^*+r^{\alpha}-2}v\, h\,\mathrm{d}r,\;\; \forall\;\; h\in X_1
\end{equation}
where 
\begin{equation}\label{constants}
\lambda=\frac{1}{\omega_{N,k}\int_{0}^{1}r^{N-1}(k^*+r^{\alpha})|v|^{k^*+r^{\alpha}}\,\mathrm{d}r}.
\end{equation}
For $\sigma, L\ge 1$, let $H\in C^{1}[1,\infty)$ such that $H(t)=t^{\sigma}-1$ for $t\in [1,L]$ and $H$ is linear in $[L,\infty)$. Then, we set 
$$
h(r)=\int_{1}^{1+v^{+}}|H^{\prime}(t)|^{k+1}\ud t,\;\; r\in (0,1]
$$
where $v^{+}=\max\left\{v, 0\right\}$. It is clear that $h\in X_1$ and, since $H^{\prime}$ is an increase function, $h\le v^{+}|H^{\prime}(v^{+}+1)|^{k+1}$. It follows from \eqref{weaksolution} that
\begin{equation}\label{Regularity-1}
\begin{aligned}
\int_{0}^{1}r^{N-k}\left|\frac{d}{dr} H(v^{+}+1)\right|^{k+1}\,\mathrm{d}
r & \le C \int_{0}^{1}r^{N-1}v^{+}\left|H^{\prime}(v^{+}+1)\right|^{k+1}|v^{+}|^{k^*+r^{\alpha}-1} \,\mathrm{d}r\\
& \le C \int_{0}^{1}r^{N-1}(v^{+}+1)\left|H^{\prime}(v^{+}+1)\right|^{k+1}(v^{+}+1)^{k^*+r^{\alpha}-1} \,\mathrm{d}r,\\
\end{aligned}
\end{equation}
for some $C>0$. Now, from Lemma~\ref{super-bounded}  we obtain
$$
\int_{0}^{1}r^{N-1}|v|^{k^*+\frac{p}{p-k-1}r^{\alpha}}\ud r<\infty,
$$
for any $p>k+1$. Hence, by choosing $p\in (k+1, k^{*})$ such that $(k^*-k-2)p<k^*(p-k-1)$, we obtain
\begin{equation}\label{Regularity-2}
\begin{aligned}
\int_{0}^{1}r^{N-1}(v^{+}+1)^{(k^*-k-2+r^{\alpha})\frac{p}{p-k-1}} \,\mathrm{d}r &\le 
\int_{0}^{1}r^{N-1}(v^{+}+1)^{k^*+\frac{p}{p-k-1}r^{\alpha}} \,\mathrm{d}r\\
&\le C_1\int_{0}^{1}r^{N-1}|v^+|^{k^*+\frac{p}{p-k-1}r^{\alpha}}\ud r+C_2\\
&\le C
\end{aligned}
\end{equation}
for  some constant $C>0$. Using the H\"{o}lder  inequality together with \eqref{Ebeddings}, \eqref{Regularity-1} and \eqref{Regularity-2}, we obtain for any $p<q<k^*$
\begin{equation}\nonumber
\begin{aligned}
\left(\int_{0}^{1}r^{N-1}|H(v^{+}+1)|^{q}\ud r\right)^{\frac{1}{q}}& \le C\left(\int_{0}^{1}r^{N-k}\left|\frac{d}{dr} H(v^{+}+1)\right|^{k+1}\,\mathrm{d}
r\right)^{\frac{1}{k+1}} \\
&\le C\left(\int_{0}^{1}r^{N-1}[(v^{+}+1)\left|H^{\prime}(v^{+}+1)\right|]^{p}\,\mathrm{d}r\right)^{\frac{1}{p}}\times\\
&\times\left(\int_{0}^{1}r^{N-1}(v^{+}+1)^{(k^*-k-2+r^{\alpha})\frac{p}{p-k-1}} \,\mathrm{d}r \right)^{\frac{p-k-1}{p(k+1)}}\\
& \le C\left(\int_{0}^{1}r^{N-1}[(v^{+}+1)\left|H^{\prime}(v^{+}+1)\right|]^{p}\,\mathrm{d}r\right)^{\frac{1}{p}}.
\end{aligned}
\end{equation}
Letting $L\rightarrow+\infty$, from the definition of $H$, we obtain
\begin{equation}\nonumber
\begin{aligned}
\|v^{+}+1\|_{L^{\chi p\sigma}_{N-1}}=\|v^{+}+1\|_{L^{q\sigma}_{N-1}}& \le (C\sigma)^{\frac{1}{\sigma}}\|v^{+}+1\|_{L^{p\sigma}_{N-1}},
\end{aligned}
\end{equation}
where $\chi =\frac{q}{p}>1$. Setting $\sigma=\chi$, $\sigma=\chi^{2},\cdots, \sigma= \chi^{i}$, an iteration yields 
\begin{equation}\label{Regularity-3}
\begin{aligned}
\|v^{+}+1\|_{L^{\chi^{i} p}_{N-1}}& \le C^{\left(\sum_{j=1}^{i-1}\frac{1}{\chi^{j}}\right)}\chi^{\sum_{j=1}^{i-1}\frac{j}{\chi^{j}}}\|v^{+}+1\|_{L^{\chi p}_{N-1}}\le C\|v^{+}+1\|_{L^{\chi p}_{N-1}},\;\; \forall i\in\mathbb{N}.
\end{aligned}
\end{equation}
Hence, $\sup_{r\in (0,1]} v^{+}(r)<\infty.$  Similarly, by using $v^{-}=-\min\left\{v,0\right\}$  instead of $v^{+}$ in the above argument, one can show that $v^{-}$ is bounded. Consequently, $v$ is bounded in $(0,1]$.
 \end{proof}
Next, we will explicit expressions for  $v^{\prime}_0$ and $v^{\prime\prime}_0$. Following the same argument 
in \cite{Clement-deFigueiredo-Mitidieri}, for each $r\in(0,1)$ and $\rho>0$, we consider the
function $h_{\rho}\in X_1$ given by
\begin{equation}\label{keyfunction}
h_{\rho}(s)=\left\{\begin{aligned}&\;1 &\mbox{if}\quad &0\le s\le r,&\\
&\;1+\frac{1}{\rho}(r-s)&\mbox{if}\quad &r\le s\le r+\rho,&\\
&\;0 &\mbox{if}\quad &s\ge r+\rho.&
\end{aligned}\right.
\end{equation}
Applying \eqref{weaksolution} with $h=h_{\rho}$ and  letting
$\rho\rightarrow 0$, we conclude
\begin{equation}\label{integral-I}
r^{N-k}(-|v_0^{\prime}|^{k-1}v_0^{\prime})=\lambda\int_{0}^{r}s^{N-1}(k^*+r^{\alpha})|v_0|^{k^*+s^{\alpha}-2}v_0\,\mathrm{d}s,\quad\mbox{a.e
on}\quad [0,1].
\end{equation}
Since we are assuming $v_0\ge 0$, we can write 
\begin{equation}\label{integralsolution}
-v^{\prime}_0(r)= \left[I(r)\right]^{\frac{1}{k}};\;\;\mbox{with}\;\; I(r)=\frac{\lambda}{r^{N-k}}\int_{0}^{r}s^{N-1}|v_0|^{k^*+s^{\alpha}-1}\,\mathrm{d}s.
\end{equation}
Consequently, we have $v_0\in C^{2}(0,1]$. In addition, from Lemma~\ref{Lemma-regular}, we get
\begin{equation}\nonumber
\lim_{r\rightarrow 0^{+}}I(r)=0.
\end{equation}
Hence, from \eqref{integralsolution} 
$$\lim_{r\rightarrow 0^{+}}v_0^{\prime}(r)=0$$
and thus $v_0\in C^{1}[0,1]$.

In order to get $v_0\in C^{2}[0,1]$, we firstly observe that
\begin{equation}\label{chiI}
\begin{aligned}
\frac{I^{\prime}(r)}{I(r)}=-\frac{N-k}{r}+\frac{r^{N-1}|v_0|^{k^*+r^{\alpha}-1}}{\int_{0}^{r}s^{N-1}|v_0|^{k^*+s^{\alpha}-1}\ud s}, \;\;\forall \, r\in (0,1].
\end{aligned}
\end{equation} 
From \eqref{integralsolution} and \eqref{chiI} it follows that
\begin{equation} \label{bi-grad}
\begin{aligned}
-v^{\prime\prime}_0(r) &= \frac{[I(r)]^{\frac{1}{k}}}{k}\frac{I^{\prime}(r)}{I(r)}\\
&=-\frac{v^{\prime}_0(r)}{k}\left[-\frac{N-k}{r}+\frac{r^{N-1}|v_0|^{k^*+r^{\alpha}-1}}{\int_{0}^{r}s^{N-1}|v_0|^{k^*+s^{\alpha}-1}\ud s}\right],
\end{aligned}
\end{equation} 
for all $r\in (0,1]$. From \eqref{integralsolution}, $v_0$ is a decreasing function with $v_0(0)>0$ and we also obtain 
\begin{equation}\label{frac-frac}
\lim_{r\rightarrow 0^{+}}-\frac{v^{\prime}_0(r)}{r}=\lim_{r\rightarrow 0^{+}}\left({\frac{I(r)}{r^{k}}}\right)^{\frac{1}{k}}=\left({\frac{\lambda}{N}|v_0(0)|^{k^*-1}}\right)^{\frac{1}{k}}>0.
\end{equation}
In addition, the identity \eqref{integralsolution} yields
\begin{equation}\label{frac-cabu}
\begin{aligned}
\frac{r^{N-1}|v_0|^{k^*+r^{\alpha}-1}v^{\prime}_{0}}{\int_{0}^{r}s^{N-1}|v_0|^{k^*+s^{\alpha}-1}\ud s}& =\frac{\lambda r^{N-1}|v_0|^{k^*+r^{\alpha}-1}v^{\prime}_{0}}{r^{N-k}(-v^{\prime}_0)^{k}}\\
&= -\lambda\left(-\frac{r}{v^{\prime}_0}\right)^{k-1}|v_0|^{k^*+r^{\alpha}-1},
\end{aligned}
\end{equation}
for small enough $r>0$. From \eqref{bi-grad}, \eqref{frac-frac} and \eqref{frac-cabu}, one gets that there exists  $\displaystyle\lim_{r\rightarrow 0^{+}}v_0^{\prime\prime}(r)$, and thus $v_0\in C^{2}[0,1]$ holds.

Now, in order to guarantee $u_0\in \Phi^{k}_{0,\mathrm{rad}}(B)$, it is enough to show that
\begin{equation}\nonumber
F_j[u_0]\ge 0\;\;\mbox{in}\;\; B,\;\;\;  \forall\; 1\le j\le k. 
\end{equation}
But, using the $k$-Hessian radial expression (cf. \eqref{Kourpradial}) and the definition $u_0(x)=-v_0(|x|)$, we can reduce the  above assert to the following
\begin{equation}\label{v-kadmi}
\left(r^{N-j}(-v_0^{\prime})^{j}\right)^{\prime}\ge 0,\;\; \forall\; 1\le j\le k\;\;\mbox{and}\;\; r\in (0,1].
\end{equation}
By using the expressions in \eqref{integralsolution} and \eqref{chiI}, it is easy to see that 
\begin{equation}\nonumber
\begin{aligned}
&\left(r^{N-j}(-v_0^{\prime})^{j}\right)^{\prime}=
\left(r^{N-j}\left[I(r)\right]^{\frac{j}{k}}\right)^{\prime}\\
&=(N-j)r^{N-j-1}\left[I(r)\right]^{\frac{j}{k}}+r^{N-j}\frac{j}{k}\left[I(r)\right]^{\frac{j}{k}}\frac{I^{\prime}(r)}{I(r)}\\
&=r^{N-j}\left[I(r)\right]^{\frac{j}{k}}\left[\frac{N-j}{r}+\frac{j}{k}\frac{I^{\prime}(r)}{I(r)}\right].
\end{aligned}
\end{equation}
To prove \eqref{v-kadmi}, it is enough to show that
$$
\left[\frac{N-j}{r}+\frac{j}{k}\frac{I^{\prime}(r)}{I(r)}\right]\ge 0,\;\; \forall\; 1\le j\le k\;\;\mbox{and}\;\; r\in (0,1].
$$
However, from \eqref{chiI}, we can write
\begin{equation}\nonumber
\begin{aligned}
&\frac{N-j}{r}+\frac{j}{k}\frac{I^{\prime}(r)}{I(r)}=\frac{N-j}{r}-\frac{j}{k}\frac{N-k}{r}+\frac{j}{k}\frac{r^{N-1}|v_0|^{k^*+r^{\alpha}-1}}{\int_{0}^{r}s^{N-1}|v_0|^{k^*+s^{\alpha}-1}\ud s}\\
&=\frac{N-k}{r}\left[\frac{N-j}{N-k}-\frac{j}{k}\right]+\frac{j}{k}\frac{r^{N-1}|v_0|^{k^*+r^{\alpha}-1}}{\int_{0}^{r}s^{N-1}|v_0|^{k^*+s^{\alpha}-1}\ud s}
\end{aligned}
\end{equation}
which is non-negative, for all $1\le j\le k$ and $r\in (0,1]$. 

\section{$k$-admissible solution to the related supercritical equation}
\label{sec5}
To find a radially symmetric solutions for the $k$-Hessian equation \eqref{ourp}, or equivalently a solution to \eqref{Kourpradial}, as in \cite{JDE2019,Dai}, we introduce the following quasilinear equation
\begin{equation}\label{ourpradial}
\left\{\begin{aligned}
&\left.\begin{aligned}
&-\mathrm{C}^{k}_{N}\left(r^{N-k}|v^{\prime}|^{k-1}v^{\prime}\right)^{\prime}=Nr^{N-1} v^{k^{*}+r^{\alpha}-1}\\
&v>0 \\
\end{aligned}\right\}&\mbox{in}&\;\;(0,1)\\
&\;v(1)=0, \; v^{\prime}(0)=0.
\end{aligned}\right.
\end{equation}
In order to show the existence of a solution for \eqref{ourpradial}, we will follow closely the argument in \cite{MR3514752}. Indeed, we will apply the variant of the well-known mountain pass theorem of Ambrosetti
and Rabinowitz without the Palais-Smale condition in \cite[Theorem 2.2]{MR0709644} to get a nontrivial critical point for the functional
\begin{equation}\label{I-func}
I(v)=\frac{1}{k+1}\int_{0}^{1}r^{N-k}|v^{\prime}|^{k+1}\ud r-\tau\int_{0}^{1}\frac{r^{N-1}}{k^*+r^{\alpha}}(v^{+})^{k^{*}+r^{\alpha}}\ud r:\;\;\; X_1\rightarrow\mathbb{R},
\end{equation}
where $v^{+}=\max\left\{v,0\right\}$  and $\tau =N/\mathrm{C}^{k}_{N}$.

It is sufficient to prove the following four steps:

\paragraph{(1)} The level  $\left(\frac{1}{k+1}-\frac{1}{k^*}\right)\left(\frac{\mathrm{C}^{k}_{N}}{N}\right)^{\frac{k+1}{k^*-k-1}}S^{\frac{N}{2k}}$
is a noncompactness level for the functional $I$.\\

\paragraph{(2)} The mountain-pass level $c_{MP}$ of the functional $I$ satisfies
$$
0<c_{MP}<\left(\frac{1}{k+1}-\frac{1}{k^*}\right)\left(\frac{\mathrm{C}^{k}_{N}}{N}\right)^{\frac{k+1}{k^*-k-1}}S^{\frac{N}{2k}}.
$$
\paragraph{(3)} The equation \eqref{ourpradial} possesses a weak solution $v\in X_1$.\\

\paragraph{(4)} If $v\in X_1$ is a weak solution for \eqref{ourpradial}, then $v\in C^{2}[0,1]$ and $u(x)=-v(|x|),\,x\in B$ is a radially symmetric $k$-admissible solution of the equation \eqref{ourp}.\\

In the next Lemmas, we will always consider the family $(w_{\epsilon})_{\epsilon>0}$ in \eqref{we} with the suitable choice
\begin{equation}\label{B-valor}
\mathcal{C}=\left(\frac{\mathrm{C}^{k}_{N}}{N}\right)^{\frac{1}{k^{*}-k-1}}=\left(\frac{1}{\tau}\right)^{\frac{1}{k^*-k-1}}.
\end{equation}
\begin{lemma} The level  $\left(\frac{1}{k+1}-\frac{1}{k^*}\right)\left(\frac{\mathrm{C}^{k}_{N}}{N}\right)^{\frac{k+1}{k^*-k-1}}S^{\frac{N}{2k}}$
is a noncompactness level for the functional $I$.
\end{lemma}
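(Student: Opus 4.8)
The plan is to exhibit a Palais--Smale sequence for $I$ at the level
$c^*:=\left(\frac{1}{k+1}-\frac{1}{k^*}\right)\left(\frac{\mathrm{C}^{k}_{N}}{N}\right)^{\frac{k+1}{k^*-k-1}}S^{\frac{N}{2k}}$
that possesses no strongly convergent subsequence in $X_1$. The natural candidate is the concentrating family $(w_\epsilon)_{\epsilon>0}$ of \eqref{we} with the normalisation $\mathcal{C}=(\mathrm{C}^{k}_{N}/N)^{1/(k^*-k-1)}=\tau^{-1/(k^*-k-1)}$ from \eqref{B-valor}, and I would set $v_n:=w_{\epsilon_n}$ for an arbitrary sequence $\epsilon_n\searrow0$. (If an exact Nehari normalisation is preferred, one replaces $w_{\epsilon_n}$ by $t_{\epsilon_n}w_{\epsilon_n}$ with $t_\epsilon\to1$ chosen so that $\langle I'(t_\epsilon w_\epsilon),w_\epsilon\rangle=0$; none of the estimates below change.) First I would check that $I(w_\epsilon)\to c^*$: since $\tau=N/\mathrm{C}^{k}_{N}$, the choice \eqref{B-valor} is precisely the one yielding $\tau\,\mathcal{C}^{k^*}=\mathcal{C}^{k+1}$, so Lemma~\ref{lemmaJDE}(a) gives $\int_{0}^{1}r^{N-k}|w_\epsilon'|^{k+1}\,\ud r=\mathcal{C}^{k+1}\big(S^{\frac{N}{2k}}+O(\epsilon^{\frac{N-2k}{k}})\big)$, while \eqref{lemma-estimate2-sharp} gives $\tau\int_{0}^{1}\frac{r^{N-1}}{k^*+r^\alpha}(w_\epsilon^+)^{k^*+r^\alpha}\,\ud r=\frac{\mathcal{C}^{k+1}}{k^*}S^{\frac{N}{2k}}+o(1)$, whence $I(w_\epsilon)=\big(\tfrac{1}{k+1}-\tfrac{1}{k^*}\big)\mathcal{C}^{k+1}S^{\frac{N}{2k}}+o(1)=c^*+o(1)$.

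The core of the argument is the asymptotic criticality $\|I'(w_\epsilon)\|_{X_1^*}\to0$. For $h\in X_1$ one has
\begin{align*}
\langle I'(w_\epsilon),h\rangle
&=\Big(\int_{0}^{1}r^{N-k}|w_\epsilon'|^{k-1}w_\epsilon'h'\,\ud r-\tau\int_{0}^{1}r^{N-1}(w_\epsilon^+)^{k^*-1}h\,\ud r\Big)\\
&\qquad -\tau\int_{0}^{1}r^{N-1}(w_\epsilon^+)^{k^*-1}\big((w_\epsilon^+)^{r^\alpha}-1\big)h\,\ud r.
\end{align*}
The normalisation \eqref{B-valor} makes $\mathcal{C}v^{*}_{\epsilon}$ a solution of the \emph{critical} Euler--Lagrange equation $-(r^{N-k}|v'|^{k-1}v')'=\tau r^{N-1}(v^+)^{k^*-1}$ (this is exactly what forces $\tau\,\mathcal{C}^{k^*}=\mathcal{C}^{k+1}$ and, through \eqref{SSpu}, the Nehari identity $\tau\int_0^\infty r^{N-1}(\mathcal{C}v^{*}_{\epsilon})^{k^*}\ud r=\int_0^\infty r^{N-k}|(\mathcal{C}v^{*}_{\epsilon})'|^{k+1}\ud r$). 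Because $w_\epsilon=\mathcal{C}v^{*}_{\epsilon}$ on $(0,r_0)$ and $w_\epsilon\equiv0$ on $[2r_0,1]$, integrating by parts shows that the parenthesised difference is supported on $\supp\eta'\subset[r_0,2r_0]$, where $v^{*}_{\epsilon}$ and $(v^{*}_{\epsilon})'$ are $O(\epsilon^{(N-2k)/(k(k+1))})$; by H\"older this contributes $O(\epsilon^{(N-2k)/(k+1)})\|h\|_{X_1}$. For the last integral I would split $(0,1)$ at $r=\epsilon^{\gamma}$ exactly as in the proof of Lemma~\ref{super-bounded}: on $[\epsilon^\gamma,1]$ one has $w_\epsilon\le1$, while on $(0,\epsilon^\gamma)$ the Taylor expansions \eqref{Taylor1}--\eqref{Taylor2} apply since $r^\alpha|\log w_\epsilon|=o(1)$ there; estimating $|h|$ by Lemma~\ref{r-typeLemma} (cf. \eqref{radial-lemma}) and invoking the weighted-moment estimates of Lemma~\ref{lemma-Isharpiii} and Lemma~\ref{lemma-Isharpiii-near1} bounds this term by $\big(O(\epsilon^\alpha|\log\epsilon|)+o(1)\big)\|h\|_{X_1}$. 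Hence $\|I'(w_\epsilon)\|_{X_1^*}=o(1)$, and $(w_{\epsilon_n})$ is a $(PS)_{c^*}$ sequence.

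Finally, $(w_\epsilon)$ is bounded in $X_1$ (its norm tends to $\mathcal{C}(\omega_{N,k}S^{\frac{N}{2k}})^{1/(k+1)}$ by Lemma~\ref{lemmaJDE}(a)) and $w_\epsilon(r)\to0$ for each fixed $r\in(0,1]$, so $w_\epsilon\rightharpoonup0$ weakly in $X_1$ as $\epsilon\searrow0$. If a subsequence of $(w_{\epsilon_n})$ converged strongly in $X_1$ its limit would be $0$, whence $c^*=\lim_n I(w_{\epsilon_n})=I(0)=0$, a contradiction; so $(w_{\epsilon_n})$ has no convergent subsequence and $c^*$ is a noncompactness level for $I$. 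I expect the main obstacle to be the uniform-in-$h$ control of the supercritical remainder $\int_0^1 r^{N-1}(w_\epsilon^+)^{k^*-1}\big((w_\epsilon^+)^{r^\alpha}-1\big)h\,\ud r$: near the origin $w_\epsilon$ blows up, and only the combination $r^\alpha|\log w_\epsilon|=o(1)$ together with the cut at $\epsilon^\gamma$ and the weighted-moment bookkeeping already developed for Lemma~\ref{lemma-sharper} renders this term negligible in the dual norm.
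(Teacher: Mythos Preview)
Your proposal is correct, but it proves strictly more than the paper does and therefore takes a different route. The paper's argument shows only that $I(w_\epsilon)\to c^*$ and that $(w_\epsilon)$ is a normalised concentrating sequence at the origin (in particular $w_\epsilon\rightharpoonup 0$ with $\int_\delta^1 r^{N-k}|w_\epsilon'|^{k+1}\,\ud r\to 0$ for every $\delta>0$), hence has no strongly convergent subsequence; it never checks that $I'(w_\epsilon)\to 0$ in $X_1^*$. In other words, the paper uses a weak reading of ``noncompactness level'' (existence of a non-precompact sequence at that level), which suffices because in the subsequent steps the Palais--Smale analysis is carried out for the limiting critical functional $I_0$, not for $I$.

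Your approach instead interprets the statement in the standard PS sense and supplies the additional dual-norm estimate. The decomposition of $\langle I'(w_\epsilon),h\rangle$ into a ``critical'' part and a supercritical remainder is sound: with the normalisation $\mathcal{C}=\tau^{-1/(k^*-k-1)}$ one indeed has that $\mathcal{C}v^{*}_{\epsilon}$ solves $-(r^{N-k}|v'|^{k-1}v')'=\tau r^{N-1}v^{k^*-1}$, so after integrating by parts on $(0,1)$ (the boundary terms vanish because $h(1)=0$ and $r^{N-k}|(v^*_\epsilon)'|^{k}|h|\to 0$ at $0$) the critical part reduces to integrals over $(r_0,1)$ where $w_\epsilon$, $w_\epsilon'$, $v^*_\epsilon$, $(v^*_\epsilon)'$ are all $O(\epsilon^{(N-2k)/(k(k+1))})$; H\"older then gives the $O(\epsilon^{(N-2k)/(k+1)})\|h\|_{X_1}$ bound. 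For the supercritical remainder a clean way to make your sketch rigorous is a single H\"older step with exponents $k^*/(k^*-1)$ and $k^*$, reducing matters to $\int_0^1 r^{N-1}(w_\epsilon)^{k^*}\big|(w_\epsilon)^{r^\alpha}-1\big|^{k^*/(k^*-1)}\ud r\to 0$; this follows from $|(w_\epsilon)^{r^\alpha}-1|=o(1)$ uniformly on $(0,\epsilon^\gamma)$ (via \eqref{Taylor1}--\eqref{Taylor2}) together with $\int_{\epsilon^\gamma}^1 r^{N-1}(w_\epsilon)^{k^*}\ud r=O(\epsilon^{N(1-\gamma)/k})$ from \eqref{exp-Lq}. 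What you gain is the stronger conclusion that $c^*$ is a genuine PS noncompactness level for $I$; what the paper gains is brevity, since its weaker statement is all that is needed for the mountain-pass scheme that follows.
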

\begin{proof}
Using Lemma~\ref{lemmaJDE} and the estimate \eqref{lemma-estimate2-sharp}, we can write
\begin{equation}\nonumber
\begin{aligned}
I(w_{\epsilon})
&=\left(\frac{1}{k+1}-\frac{\tau\mathcal{C}^{k^*-k-1}}{k^*}\right)\mathcal{C}^{k+1}S^{\frac{N}{2k}}+O(\epsilon^{\frac{N-2k}{k}})_{\epsilon\searrow 0}+O(\epsilon^{\alpha}\log\epsilon)_{\epsilon\searrow 0}\\
&\rightarrow \left(\frac{1}{k+1}-\frac{1}{k^*}\right)\left(\frac{\mathrm{C}^{k}_{N}}{N}\right)^{\frac{k+1}{k^*-k-1}}S^{\frac{N}{2k}}
\end{aligned}
\end{equation}
where we have used our choice in \eqref{B-valor}. Further, for any $\delta>0$ we can write
\begin{equation}\nonumber
\begin{aligned}
\int_{\delta}^{\infty}r^{N-k}|(v^{*}_{\epsilon})^{\prime}|^{k+1}\ud r=\left(\frac{\hat{c}(N-2k)}{k}\right)^{k+1}\int_{\delta/\epsilon}^{\infty}\frac{s^{1-\frac{N}{k}}}{(1+s^{-2})^{\frac{N}{2k}(k+1)}}=O(\epsilon^{\frac{N-2k}{k}})_{\epsilon\searrow 0}.
\end{aligned}
\end{equation}
For $\delta<r_0$, we have $\eta
\equiv1$ in $(0,\delta)$. Hence, from \eqref{SSpu}
\begin{equation}\nonumber
\begin{aligned}
\int_{0}^{\delta}r^{N-k}|w^{\prime}_{\epsilon}|^{k+1}\ud r&=\mathcal{C}^{k+1}\int_{0}^{\delta}r^{N-k}|(v^{*}_{\epsilon})^{\prime}|^{k+1}\ud r\\
&= \mathcal{C}^{k+1}\left(S^{\frac{N}{2k}}+O(\epsilon^{\frac{N-2k}{k}})_{\epsilon\searrow 0}\right)\\
&\rightarrow \left(\frac{\mathrm{C}^{k}_{N}}{N}\right)^{\frac{k+1}{k^*-k-1}}S^{\frac{N}{2k}},
\end{aligned}
\end{equation}
as $\epsilon\rightarrow 0$.  This together with Lemma~\ref{lemmaJDE} yields
\begin{equation}\nonumber
\begin{aligned}
\lim_{\epsilon\rightarrow0}\int_{\delta}^{1}r^{N-k}|w^{\prime}_{\epsilon}|^{k+1}\ud r=0,
\end{aligned}
\end{equation}
for any $0<\delta<1$. In addition, 
\begin{equation}\nonumber
\begin{aligned}
\int_{0}^{1}r^{N-1}|w_{\epsilon}|^{k+1}\ud r &\le \mathcal{C}^{k+1}\int_{0}^{1}r^{N-1}|v^{*}_{\epsilon}|^{k+1}\ud r\\
& =(\mathcal{C}\hat{c})^{k+1}\epsilon^{-N+2k}\int_{0}^{1}r^{N-1}\left(1+\left(\frac{r}{\epsilon}\right)^2\right)^{-\frac{N-2k}{2k}(k+1)}\ud r\\
& =(\mathcal{C}\hat{c})^{k+1}\epsilon^{2k}\int_{0}^{1/\epsilon}\frac{s^{N-1}}{\left(1+s^2\right)^{\frac{N-2k}{2k}(k+1)}}\ud s\rightarrow 0,\\
\end{aligned}
\end{equation}
as $\epsilon\rightarrow 0$. From the compact embedding \eqref{Ebeddings}, up to a subsequence,  we conclude that  $w_{\epsilon}\rightharpoonup 0$ weakly in $X_1$ and thus $(w_{\epsilon})$ is concentrating at the origin and
does not contain a strongly convergent subsequence.
\end{proof}
\noindent According with \cite{JDE2019}, the functional $I$ belongs to $C^{1}(X_1,\mathbb{R})$ and satisfies the  mountain-pass structure.
This allow us to define the mountain-pass level 
\begin{equation}\label{c-minimax-level}
0<c_{MP}:=\inf_{\gamma\in\Gamma}\max_{u\in\gamma}I(u),
\end{equation}
where
$$
\Gamma=\left\{\gamma\in C([0,T],X_{1})\;:\; \gamma(0)=0,\; \gamma(T)=Tw_{\epsilon} \right\}
$$
with $T>0$ large enough so that $I(Tw_{\epsilon})\le 0$. 
\begin{lemma}\label{mpl-sharp} Suppose $0<\alpha<(N-2k)/k$. Then, the mountain-pass level $c_{MP}$ satisfies
\begin{equation}\nonumber
0<c_{MP}<\left(\frac{1}{k+1}-\frac{1}{k^*}\right)\left(\frac{\mathrm{C}^{k}_{N}}{N}\right)^{\frac{k+1}{k^*-k-1}}S^{\frac{N}{2k}}.
\end{equation}
\end{lemma}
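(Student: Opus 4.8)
The plan is to produce a single path in $\Gamma$ along which the maximum of $I$ stays strictly below the threshold $\left(\frac{1}{k+1}-\frac{1}{k^*}\right)\left(\frac{\mathrm{C}^{k}_{N}}{N}\right)^{\frac{k+1}{k^*-k-1}}S^{\frac{N}{2k}}$. The natural candidate is the ray $t\mapsto tw_{\epsilon}$, $t\in[0,T]$, with $w_{\epsilon}$ as in \eqref{we} and $\mathcal{C}$ fixed by \eqref{B-valor}. So the first step is to reduce the estimate of $c_{MP}$ to controlling $\max_{t\ge 0}I(tw_{\epsilon})$: since this ray is admissible (after reparametrising so that $I(Tw_\epsilon)\le 0$), one has $c_{MP}\le\max_{t\ge0}I(tw_{\epsilon})$ for every small $\epsilon>0$, and it suffices to show that for some $\epsilon>0$ this quantity is strictly less than the threshold. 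The lower bound $c_{MP}>0$ is already part of \eqref{c-minimax-level} (the mountain-pass geometry is quoted from \cite{JDE2019}), so only the upper bound needs work.

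Next I would analyse the function $g_{\epsilon}(t)=I(tw_{\epsilon})$. Writing $a_\epsilon=\int_0^1 r^{N-k}|w_\epsilon'|^{k+1}\ud r$ and using \eqref{lemma-estimate} and \eqref{lemma-estimate2-sharp}, we have
\begin{equation}\nonumber
g_{\epsilon}(t)=\frac{t^{k+1}}{k+1}a_\epsilon-\tau\int_{0}^{1}\frac{r^{N-1}}{k^*+r^{\alpha}}t^{k^{*}+r^{\alpha}}|w_{\epsilon}|^{k^{*}+r^{\alpha}}\ud r.
\end{equation}
The key point is that $t^{k^*+r^\alpha}\ge t^{k^*}$ for $t\ge 1$ and all $r\in(0,1)$, so the subtracted term dominates the pure power $t^{k^*}\tau\int_0^1\frac{r^{N-1}}{k^*+r^\alpha}|w_\epsilon|^{k^*+r^\alpha}\ud r$. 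Hence $g_\epsilon(t)$ is bounded above by the corresponding function for the pure critical problem plus an error, and its maximum over $t\ge0$ is attained at some bounded $t_\epsilon$, which one estimates via Lemma~\ref{lemmaJDE}(a) and \eqref{lemma-estimate2-sharp}. Carrying out the elementary maximisation of $t\mapsto\frac{t^{k+1}}{k+1}a_\epsilon-\frac{\tau t^{k^*}}{k^*}b_\epsilon$ (with $b_\epsilon=\int_0^1 r^{N-1}|w_\epsilon|^{k^*+r^\alpha}\ud r$) gives a value of the shape $\left(\frac{1}{k+1}-\frac{1}{k^*}\right)\bigl(a_\epsilon^{k^*}/(\tau^{k+1}b_\epsilon^{k+1})\bigr)^{1/(k^*-k-1)}$. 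Plugging in $a_\epsilon=\mathcal{C}^{k+1}S^{N/2k}+O(\epsilon^{(N-2k)/k})$ from Lemma~\ref{lemmaJDE}(a) and $b_\epsilon=\mathcal{C}^{k^*}S^{N/2k}+\mathcal{C}_1|\log\epsilon|\epsilon^{\alpha}+o(\epsilon^{\alpha}|\log\epsilon|)$ from \eqref{lemma-estimate} (valid since $\alpha<(N-2k)/k<N/k$), together with $\tau^{k+1}\mathcal{C}^{(k+1)(k^*-k-1)}=1$ by \eqref{B-valor}, the leading terms assemble exactly into the threshold value, while the correction is governed by the ratio of the negative $\epsilon^{\alpha}|\log\epsilon|$ contribution from $b_\epsilon$ against the $O(\epsilon^{(N-2k)/k})$ contribution from $a_\epsilon$.

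The main obstacle is therefore the sign of the first-order correction: one must verify that the gain $-\mathcal{C}_1|\log\epsilon|\epsilon^{\alpha}$ coming from the supercritical term genuinely lowers $\max_{t}g_\epsilon(t)$ below the threshold, i.e. that it is not swamped by the $O(\epsilon^{(N-2k)/k})$ remainder in $a_\epsilon$. This is precisely where the hypothesis $0<\alpha<(N-2k)/k$ enters: it guarantees $\epsilon^{\alpha}|\log\epsilon|/\epsilon^{(N-2k)/k}\to+\infty$ as $\epsilon\searrow0$, so for $\epsilon$ small enough the negative term wins and
\begin{equation}\nonumber
\max_{t\ge0}I(tw_{\epsilon})=\left(\frac{1}{k+1}-\frac{1}{k^*}\right)\left(\frac{\mathrm{C}^{k}_{N}}{N}\right)^{\frac{k+1}{k^*-k-1}}S^{\frac{N}{2k}}-c\,\epsilon^{\alpha}|\log\epsilon|+o(\epsilon^{\alpha}|\log\epsilon|)
\end{equation}
for some $c>0$, which is strictly below the threshold. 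Combining this with $c_{MP}\le\max_{t\ge0}I(tw_\epsilon)$ and the already-known strict positivity $c_{MP}>0$ completes the proof. A minor technical point to handle carefully is that the error term in the maximisation must be propagated uniformly through the (smooth, concave-then-decreasing) shape of $g_\epsilon$, but this is routine once the expansions of $a_\epsilon$ and $b_\epsilon$ are in hand.
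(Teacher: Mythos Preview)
Your reduction to estimating $\max_{t\ge 0}I(tw_\epsilon)$ is fine, and the ingredients you invoke (Lemma~\ref{lemmaJDE}(a), \eqref{lemma-estimate}, \eqref{lemma-estimate2-sharp}) are the right ones. The gap is in the comparison step. The inequality $t^{k^*+r^\alpha}\ge t^{k^*}$ holds only for $t\ge 1$; for $t<1$ it is reversed, so on $[0,1]$ one has $g_\epsilon(t)\ge h_\epsilon(t)$ rather than $\le$. Now the actual maximiser $t_\epsilon$ of $g_\epsilon$ lies \emph{below} $1$ for small $\epsilon$: the critical-point equation reads
\[
a_\epsilon=\tau\int_0^1 t_\epsilon^{\,k^*-k-1+r^\alpha}r^{N-1}|w_\epsilon|^{k^*+r^\alpha}\,\ud r,
\]
the right-hand side is strictly increasing in $t_\epsilon$, and at $t_\epsilon=1$ it equals $\tau b_\epsilon=\mathcal{C}^{k+1}S^{N/2k}+\tau\mathcal{C}_1\epsilon^\alpha|\log\epsilon|+o(\epsilon^\alpha|\log\epsilon|)$, which \emph{exceeds} $a_\epsilon=\mathcal{C}^{k+1}S^{N/2k}+O(\epsilon^{(N-2k)/k})$ precisely because $\alpha<(N-2k)/k$. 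Hence $t_\epsilon<1$, your upper bound by $h_\epsilon$ does not cover the point where $g_\epsilon$ is largest, and in fact $g_\epsilon(t^*)\ge h_\epsilon(t^*)=\max h_\epsilon$, so the comparison goes the wrong way.

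The paper handles this by working with $t_\epsilon$ directly rather than through a one-sided comparison. It first shows $t_\epsilon\to 1$, then uses the auxiliary function $f(q,t)=(t^q-1)/(t-1)$ (bounded below by a positive constant on $[k^*-k-1,k^*-k]\times[1/2,3/2]$) together with the critical-point equation to extract the quantitative rate $t_\epsilon=1+R_\epsilon$ with $R_\epsilon=O(\epsilon^\alpha|\log\epsilon|)$. With this in hand one Taylor-expands $(1+R_\epsilon)^{k+1}$ and $(1+R_\epsilon)^{k^*+r^\alpha}$ and substitutes into $I(t_\epsilon w_\epsilon)$, obtaining the threshold minus $\frac{\tau\mathcal{C}_1}{k^*}\epsilon^\alpha|\log\epsilon|$ plus errors of order $O(\epsilon^{(N-2k)/k})+O(R_\epsilon^2)+o(\epsilon^\alpha|\log\epsilon|)$. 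Your final heuristic conclusion is correct, but reaching it requires this quantitative control of $t_\epsilon$; the elementary maximisation of the pure-power surrogate $h_\epsilon$ is not enough.
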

\begin{proof}
For each $\epsilon>0$, there exists $t_{\epsilon}>0$ such that
\begin{equation}\nonumber
c_{MP}\le \max_{t\in [0,T]}I(tw_{\epsilon})=I(t_{\epsilon}w_{\epsilon}).
\end{equation}
Next, we will analyze the behavior of $(t_{\epsilon})_{\epsilon>0}$. Since $\frac{d}{dt}I(tw_{\epsilon})|_{t=t_{\epsilon}}=0$, we obtain
\begin{equation}\label{singular-grad}
\begin{aligned}
\int_{0}^{1}r^{N-k}|w^{\prime}_{\epsilon}|^{k+1}\ud r=\tau t^{k^*-k-1}_{\epsilon}\int_{0}^{1}t^{r^{\alpha}}_{\epsilon}r^{N-1}|w_{\epsilon}|^{k^*+r^{\alpha}}\ud r.
\end{aligned}
\end{equation}
Using Lemma~\ref{lemmaJDE}-$(a)$ together with Lemma~\ref{lemma-sharper}, we obtain
\begin{equation}\label{old-est}
\int_{0}^{1}r^{N-k}|w^{\prime}_{\epsilon}|^{k+1}\ud r=\mathcal{C}^{k+1}S^{\frac{N}{2k}}+O(\epsilon^{\frac{N-2k}{k}})_{\epsilon\searrow0}
\end{equation}
and (cf.\eqref{lemma-estimate})
\begin{equation}\label{sharp-est}
\int_{0}^{1}r^{N-1}|w_{\epsilon}|^{k^*+r^{\alpha}}\ud r = \mathcal{C}^{k^*}S^{\frac{N}{2k}}+ \mathcal{C}_{1}|\log\epsilon|\epsilon^{\alpha}+o\left(\epsilon^{\alpha}|\log\epsilon|\right)_{\epsilon\searrow0}.
\end{equation}
We claim that
\begin{equation}\label{te-limite}
t_{\epsilon}\rightarrow 1,\;\;\mbox{as}\;\; \epsilon\rightarrow 0.
\end{equation}
Indeed, if $
\limsup_{\epsilon\rightarrow0}t_{\epsilon}>1
$
it is possible to choose $\kappa>1$ and a subsequence $(t_{\epsilon_{i}})$ such that $\epsilon_{i}\rightarrow 0$ as $i\rightarrow\infty$ and $t_{\epsilon_{i}}>\kappa$ for all $i$. Using \eqref{B-valor}, \eqref{singular-grad}, \eqref{old-est} and \eqref{sharp-est}, we deduce
\begin{equation}\nonumber
\begin{aligned}
S^{\frac{N}{2k}}+O(\epsilon^{\frac{N-2k}{k}}_{i})_{\epsilon_i\searrow0}&\ge\tau\left(\kappa\mathcal{C}\right)^{k^*-k-1}\left(S^{\frac{N}{2k}}+O\left(\epsilon^{\alpha}_{i}|\log\epsilon_{i}|\right)_{\epsilon_{i}\searrow0}\right)\\
&=\kappa^{k^*-k-1}\left(S^{\frac{N}{2k}}+O\left(\epsilon^{\alpha}_{i}|\log\epsilon_{i}|\right)_{\epsilon_i\searrow0}\right) ,\;\;\forall\, i.
\end{aligned}
\end{equation}
Letting $i\rightarrow\infty$, we obtain $\kappa\le 1$
which is a contradiction. Hence, 
$$
\limsup_{\epsilon\rightarrow0}t_{\epsilon}\le 1.
$$
Analogously, if 
$
\liminf_{\epsilon\rightarrow0}t_{\epsilon}<1
$
there are $\kappa<1$ and a subsequence $(t_{\epsilon_{i}})$ such that $\epsilon_{i}\rightarrow 0$ as $i\rightarrow\infty$ and $t_{\epsilon_{i}}<\kappa$ for all $i$. It also follows from \eqref{B-valor}, \eqref{singular-grad}, \eqref{old-est} and \eqref{sharp-est} that
\begin{equation}\nonumber
S^{\frac{N}{2k}}+O(\epsilon^{\frac{N-2k}{k}}_{i})_{\epsilon_i\searrow0}\le\kappa^{k^*-k-1}\left(S^{\frac{N}{2k}}+O\left(\epsilon^{\alpha}_{i}|\log\epsilon_{i}|\right)_{\epsilon_i\searrow0}\right),\;\;\forall\, i
\end{equation}
which is also a contradiction and the proof of \eqref{te-limite} is completed.

It is clear that  $k^{*}-k-1\le k^{*}-k-1+r^{\alpha}\le k^{*}- k$ and $1/2<t_{\epsilon}<3/2$, for any $r\in [0,1]$ and $\epsilon>0$ small enough. Inspired by \cite{Ngu}, we consider the following auxiliary function defined on $[k^*-k-1,k^*-k]\times[1/2,3/2]$
\begin{equation}
f(q,t)=\left\{\begin{aligned}
&\frac{t^q-1}{t-1}\;\;&\mbox{if}&\;\; t\not =1,\\
& q\;\;&\mbox{if}&\;\;t=1.
\end{aligned}\right.
\end{equation}
The function $f$ is continuous and $f>0$ on $[k^*-k-1,k^*-k]\times[1/2,3/2]$. Hence, 
$$
C_0=\inf\left\{f(q,t)\,:\, (q,t)\in [k^*-k-1,k^*-k]\times[1/2,3/2] \right\}>0.
$$
From  from \eqref{B-valor}, \eqref{singular-grad}, \eqref{old-est} and \eqref{sharp-est}, we can write
\begin{equation}
\begin{aligned}
O(\epsilon^{\alpha}\log\epsilon)_{\epsilon\searrow0}+O\left(\epsilon^{\frac{N-2k}{k}}\right)_{\epsilon\searrow0}&
=\left|\int_{0}^{1}\left(t^{k^*-k-1+r^{\alpha}}_{\epsilon}-1\right)r^{N-1}|w_{\epsilon}|^{k^*+r^{\alpha}}\ud r\right|\\
&=|t_{\epsilon}-1|\int_{0}^{1}f(k^*-k-1+r^{\alpha},t_{\epsilon})r^{N-1}|w_{\epsilon}|^{k^*+r^{\alpha}}\ud r\\
&\ge C_{0}|t_{\epsilon}-1|\left( \mathcal{C}^{k^*}S^{\frac{N}{2k}}+ \mathcal{C}_{1}|\log\epsilon|\epsilon^{\alpha}+o\left(\epsilon^{\alpha}|\log\epsilon|\right)_{\epsilon\searrow 0}\right).
\end{aligned}
\end{equation}
Hence, since we are supposing $\alpha<(N-2k)/k$, we get $t_{\epsilon}=1+R_{\epsilon}$, with $R_{\epsilon}=O(\epsilon^{\alpha}\log\epsilon)_{\epsilon\searrow 0}$. In particular,  using Taylor's expansion
\begin{equation}\label{1Rconst}
\begin{aligned}
(1+R_{\epsilon})^{k+1}&= 1+(k+1)R_{\epsilon}+O(R^{2}_{\epsilon})_{\epsilon\searrow 0}
\end{aligned}
\end{equation}
and
\begin{equation}\nonumber
\begin{aligned}
(1+R_{\epsilon})^{k^*+r^{\alpha}}
&= 1+(k^*+r^{\alpha})R_{\epsilon}+O(R^{2}_{\epsilon})_{\epsilon\searrow 0}.
\end{aligned}
\end{equation}
Therefore, this last estimate together \eqref{sharp-est} ensures 
\begin{equation}\label{peteleco1}
\begin{aligned}
\int_{0}^{1}\frac{(1+R_{\epsilon})^{k^*+r^{\alpha}}-1}{k^*+r^{\alpha}}r^{N-1}|w_{\epsilon}|^{k^*+r^{\alpha}}\ud r& =R_{\epsilon}\int_{0}^{1}r^{N-1}|w_{\epsilon}|^{k^*+r^{\alpha}}\ud r\\
&=R_{\epsilon}\mathcal{C}^{k^*}S^{\frac{N}{2k}}+O(R^{2}_{\epsilon})_{\epsilon\searrow 0}.
\end{aligned}
\end{equation}
Combining \eqref{old-est}, \eqref{1Rconst} and \eqref{peteleco1} with \eqref{lemma-estimate2-sharp}, we get
\begin{equation}
\begin{aligned}
\tau^{-1}c_{MP}&\le \tau^{-1}I(t_{\epsilon}w_{\epsilon})=\frac{t^{k+1}_{\epsilon}}{k+1}\left(\frac{1}{\tau}\int_{0}^{1}r^{N-k}|w_{\epsilon}^{\prime}|^{k+1}\ud r\right)-\int_{0}^{1}\frac{r^{N-1}}{k^*+r^{\alpha}}|t_{\epsilon}w_{\epsilon}|^{k^{*}+r^{\alpha}}\ud r\\
&=\frac{(1+R_{\epsilon})^{k+1}}{k+1}\left(\mathcal{C}^{k^*}S^{\frac{N}{2k}}+O\left(\epsilon^{\frac{N-2k}{k}}\right)_{\epsilon\searrow 0}\right)- 
\int_{0}^{1}\frac{(1+R_{\epsilon})^{k^*+r^{\alpha}}-1}{k^*+r^{\alpha}}r^{N-1}|w_{\epsilon}|^{k^*+r^{\alpha}}\ud r\\
& \quad-\int_{0}^{1}\frac{r^{N-1}}{k^*+r^{\alpha}}|w_{\epsilon}|^{k^{*}+r^{\alpha}}\ud r\\
&=\frac{(1+R_{\epsilon})^{k+1}}{k+1}\left(\mathcal{C}^{k^*}S^{\frac{N}{2k}}+O\left(\epsilon^{\frac{N-2k}{k}}\right)_{\epsilon\searrow 0}\right)-\left(R_{\epsilon}\mathcal{C}^{k^*}S^{\frac{N}{2k}}+O(R^{2}_{\epsilon})_{\epsilon\searrow 0}\right)\\
&\quad\quad - \frac{\mathcal{C}^{k^*}S^{\frac{N}{2k}}}{k^*}-\frac{\mathcal{C}_{1}}{k^*}|\log\epsilon|\epsilon^{\alpha}+o\left(\epsilon^{\alpha}|\log\epsilon|\right)_{\epsilon\searrow0}\\
&= \left(\frac{1}{k+1}-\frac{1}{k^*}\right)\mathcal{C}^{k^*}S^{\frac{N}{2k}}-\frac{\mathcal{C}_1}{k^*}\epsilon^{\alpha}|\log\epsilon| + O\left(\epsilon^{\frac{N-2k}{k}}\right)_{\epsilon\searrow 0}+O(R^{2}_{\epsilon})_{\epsilon\searrow 0}+o(\epsilon^{\alpha}\log \epsilon)_{\epsilon\searrow 0}.
\end{aligned}
\end{equation}
Hence, putting \eqref{B-valor} 
\begin{equation}
\begin{aligned}
c_{MP}&\le \left(\frac{1}{k+1}-\frac{1}{k^*}\right)\left(\frac{\mathrm{C}^{k}_{N}}{N}\right)^{\frac{k+1}{k^*-k-1}}S^{\frac{N}{2k}}\\
&\quad +\epsilon^{\alpha}|\log\epsilon|\left[-\frac{\mathcal{C}_1}{k^*}+O\left(\frac{\epsilon^{\frac{N-2k}{k}}}{\epsilon^{\alpha}|\log\epsilon|}\right)_{\epsilon\searrow 0}+O\left(\frac{R^{2}_{\epsilon}}{\epsilon^{\alpha}|\log\epsilon|}\right)_{\epsilon\searrow 0}+o(1)_{\epsilon\searrow 0}\right],
\end{aligned}
\end{equation}
which completes the proof since $0<\alpha<(N-2k)/k$.
\end{proof}
Taking into account Lemma~\ref{mpl-sharp} and since $I$ has the mountain-pass structure, we can apply \cite[Theorem 2.2]{MR0709644} to get a Palais-Smale sequence $(v_n)\subset X_1$ for the functional $I$ at level $c_{MP}$. That is,
\begin{equation}\label{PS-seq1}
I(v_n)\rightarrow c_{MP}<\left(\frac{1}{k+1}-\frac{1}{k^*}\right)\left(\frac{\mathrm{C}^{k}_{N}}{N}\right)^{\frac{k+1}{k^*-k-1}}S^{\frac{N}{2k}},\;\; \mbox{as}\;\;n\rightarrow \infty
\end{equation}
and
\begin{equation}\label{PS-seq2}
I^{\prime}(v_n)\varphi=\int_{0}^{1}r^{N-k}|v^{\prime}_n|^{k-1}v^{\prime}_{n}\varphi^{\prime}\ud r -\tau \int_{0}^{1}r^{N-1}(v^{+}_n)^{k^*+r^{\alpha}-1}\varphi\ud r\rightarrow 0, \;\; \mbox{as}\;\;n\rightarrow \infty
\end{equation}
for any $\varphi\in X_1$.
\begin{lemma}
 The equation \eqref{ourpradial} possesses a weak solution $v\in X_1$.
\end{lemma}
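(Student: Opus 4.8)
The plan is to show that the Palais--Smale sequence $(v_n)\subset X_1$ at level $c_{MP}$ supplied by \eqref{PS-seq1}--\eqref{PS-seq2} admits a subsequence converging to a nontrivial weak solution of \eqref{ourpradial}. First I would verify that $(v_n)$ is bounded in $X_1$: testing \eqref{PS-seq2} with $\varphi=v_n$, combining with \eqref{PS-seq1}, and using that $1/k^{*}\ge 1/(k^{*}+r^{\alpha})$ on $[0,1]$, one gets
\[
\Big(\frac{1}{k+1}-\frac{1}{k^{*}}\Big)\int_{0}^{1}r^{N-k}|v_n'|^{k+1}\,\ud r\ \le\ I(v_n)-\frac{1}{k^{*}}I'(v_n)v_n\ =\ c_{MP}+o(1)+o(1)\|v_n\|_{X_1},
\]
which bounds $\|v_n\|_{X_1}$ since $k^{*}>k+1$. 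By the compact embedding \eqref{Ebeddings} we may then pass to a subsequence with $v_n\rightharpoonup v$ in $X_1$, $v_n\to v$ in $L^{q}_{N-1}$ for every $q<k^{*}$, and $v_n(r)\to v(r)$ a.e.\ in $(0,1)$.

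\emph{$v$ is a nonnegative weak solution.} Proceeding exactly as in the proof of Step~3 of Proposition~\ref{prop1}, I would fix $r_0\in(0,1)$, test \eqref{PS-seq2} with $\varphi=\eta(v_n-v)$ for the cut-off $\eta$ in \eqref{eta-function}, use the compact embedding \eqref{emb-rabo} on $[r_0,1]$ together with the pointwise control \eqref{Xradial-lemma} to make the nonlinear contribution negligible, and then invoke the elementary inequality $2^{2-p}|b-a|^{p}\le(|b|^{p-2}b-|a|^{p-2}a)(b-a)$ with $p=k+1$ to conclude $v_n'\to v'$ strongly in $L^{k+1}_{N-k}([r_0,1])$; sending $r_0\downarrow 0$ along a diagonal subsequence gives $v_n'(r)\to v'(r)$ a.e.\ in $(0,1)$. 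Since $(|v_n'|^{k-1}v_n')$ is bounded in the reflexive space $L^{(k+1)/k}_{N-k}$, this forces $|v_n'|^{k-1}v_n'\rightharpoonup|v'|^{k-1}v'$ there, so the gradient term in \eqref{PS-seq2} passes to the limit against every $\varphi\in X_1$; for the nonlinear term one uses that on each $[r_0,1]$ the bound \eqref{Xradial-lemma} makes $(v_n^{+})^{k^{*}+r^{\alpha}-1}$ uniformly bounded --- and uniformly integrable near $r=0$ via Lemma~\ref{super-bounded}, in the spirit of \eqref{Regularity-2} --- whence $(v_n^{+})^{k^{*}+r^{\alpha}-1}\to(v^{+})^{k^{*}+r^{\alpha}-1}$ in the relevant weighted space. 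Thus $v$ satisfies
\[
\int_{0}^{1}r^{N-k}|v'|^{k-1}v'\varphi'\,\ud r=\tau\int_{0}^{1}r^{N-1}(v^{+})^{k^{*}+r^{\alpha}-1}\varphi\,\ud r,\qquad\forall\,\varphi\in X_1,
\]
and choosing $\varphi=v^{-}$ together with $v(1)=0$ yields $v\ge 0$, i.e.\ $v$ is a nonnegative weak solution of \eqref{ourpradial}.

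\emph{Nontriviality via the energy level.} It remains to rule out $v\equiv 0$. Assuming $v\equiv 0$, the pointwise bound \eqref{Xradial-lemma} and dominated convergence give $\int_{\delta}^{1}r^{N-1}(v_n^{+})^{k^{*}+r^{\alpha}}\ud r\to 0$ for each $\delta>0$, so all the relevant mass concentrates at the origin, and --- absorbing the variable part of the exponent exactly as in \eqref{r-contratating-0} --- one gets $\int_{0}^{1}r^{N-1}(v_n^{+})^{k^{*}+r^{\alpha}}\ud r=\int_{0}^{1}r^{N-1}(v_n^{+})^{k^{*}}\ud r+o(1)$. Feeding this into $I(v_n)\to c_{MP}$ and $I'(v_n)v_n\to 0$, and writing $\ell:=\lim_n\int_{0}^{1}r^{N-k}|v_n'|^{k+1}\ud r$ (along a further subsequence), gives $\tau\int_{0}^{1}r^{N-1}(v_n^{+})^{k^{*}}\ud r\to\ell$ and $c_{MP}\ge\bigl(\tfrac{1}{k+1}-\tfrac{1}{k^{*}}\bigr)\ell$. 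If $\ell=0$ this contradicts $c_{MP}>0$; if $\ell>0$, the Sobolev inequality with best constant $S$ (cf.\ \eqref{SSp}), applied to $v_n^{+}$, gives $\ell\ge S(\ell/\tau)^{(k+1)/k^{*}}$, hence, using $k^{*}/(k^{*}-k-1)=N/(2k)$ and $\tau=N/\mathrm{C}^{k}_{N}$,
\[
c_{MP}\ \ge\ \Big(\frac{1}{k+1}-\frac{1}{k^{*}}\Big)\Big(\frac{\mathrm{C}^{k}_{N}}{N}\Big)^{\frac{k+1}{k^{*}-k-1}}S^{\frac{N}{2k}},
\]
contradicting Lemma~\ref{mpl-sharp}. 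Therefore $v\not\equiv 0$, and $v$ is the desired weak solution of \eqref{ourpradial}.

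\emph{Main obstacle.} The crux is the loss of compactness at the critical exponent $k^{*}$. It is handled locally, away from the origin, by the monotonicity argument borrowed from Step~3 (this is what identifies $v$ as a genuine weak solution despite the supercritical nonlinearity for $r>0$), while the nontriviality of $v$ cannot be obtained by compactness alone and must be extracted from the strict gap $c_{MP}<\bigl(\tfrac{1}{k+1}-\tfrac{1}{k^{*}}\bigr)(\mathrm{C}^{k}_{N}/N)^{\frac{k+1}{k^{*}-k-1}}S^{\frac{N}{2k}}$ of Lemma~\ref{mpl-sharp}; throughout, the variable exponent $k^{*}+r^{\alpha}$ has to be controlled exactly as in Step~2 and in the regularity estimate of Section~\ref{sec4}.
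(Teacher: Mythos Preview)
Your proof is correct and follows essentially the same route as the paper: boundedness of the Palais--Smale sequence, weak/local-strong convergence yielding a weak solution, and nontriviality by showing that $v\equiv 0$ would force concentration at the origin and hence an energy at least the threshold, contradicting Lemma~\ref{mpl-sharp}. The only cosmetic difference is in the last step: the paper recasts $(v_n)$ as a Palais--Smale sequence for the pure-$k^{*}$ functional $I_0$ (via the same absorption estimates you use) and then invokes the known Palais--Smale compactness for $I_0$ below the critical level, whereas you unfold that compactness argument directly through the Sobolev best constant $S$ --- the two are the same computation at different levels of packaging.
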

\begin{proof}
Denote
$$
F(r,s)=\frac{s^{k^{*}+r^{\alpha}}}{k^*+r^{\alpha}}, \;\; s\ge 0\;\;\mbox{and}\;\; r\in [0,1].
$$
It is easy to see that $F$ satisfies the well-known Ambrosetti-Rabinowitz condition: there exists $\xi\in (k+1,k^*)$ such that
\begin{equation}\label{AR}
\xi F(r,s)\le sf(r,s),\;\; \forall\; s\ge 0\;\;\mbox{and}\;\; r\in [0,1]
\end{equation} 
where $f(r,s)=\frac{\partial F}{\partial s}(r,s)$. From \eqref{AR} we have
\begin{equation}\nonumber
\begin{aligned}
|\xi I(v_n)-I^{\prime}(v_n)v_n|
&\ge \left(\frac{\xi}{k+1}-1\right)\int_{0}^{1}r^{N-k}|v^{\prime}_n|^{k+1}\ud r.
\end{aligned}
\end{equation}
Hence, it follows from \eqref{PS-seq1} and \eqref{PS-seq2} that
\begin{equation}\nonumber
\begin{aligned}
C_1+\epsilon_{n}\|v_n\|_{X_1}
&\ge C_2\|v_n\|^{k+1}_{X_1},
\end{aligned}
\end{equation}
where $C_1, C_{2}>0$ and $\epsilon_{n}\rightarrow 0$ as $n\rightarrow\infty$. It follows that 
$(v_n)$ is bounded sequence in $X_1$.  In addition, from \eqref{Ebeddings}, up to a subsequence, we can assume that there exists $v\in X_1$ such that
 \begin{equation}\label{weak-3}
 v_n\rightharpoonup v\;\;\mbox{in}\;\; X_1,\;\;  v_n\rightarrow v\;\;\mbox{in}\;\; L^{q}_{N-1},\;\forall\, q\in [1,k^*)\;\;\mbox{and}\;\; v_n(r)\rightarrow v(r)\;\;\mbox{a.e in}\;\;(0,1).
 \end{equation}
It is standard to show (see for instance  \cite{CCM02017,Djairo-Miy}) that $v\in X_1$ solves the weak equation 
\begin{equation}\label{weak-equation}
\int_{0}^{1}r^{N-k}|v^{\prime}|^{k-1}v^{\prime}\varphi^{\prime}\ud r=\tau \int_{0}^{1}r^{N-1} (v^+)^{k^{*}+r^{\alpha}-1}\varphi\,\ud r, \;\;\mbox{for all}\;\;\varphi\in X_1.
\end{equation}
Using the test function \eqref{keyfunction}, we can write (cf.\eqref{integral-I})
\begin{equation}\label{integral equation- II}
-(r^{N-k}|v^{\prime}|^{k-1}v^{\prime})=\tau\int_{0}^{r}s^{N-1} (v^+)^{k^{*}+s^{\alpha}-1}\,\ud s, \quad\mbox{a.e
on}\quad (0,1).
\end{equation}
Hence,  $v$ is a non-increasing function with $v(1)=0$. Then, either $v>0$ or $v\equiv 0$ in $(0,1)$. It remains to prove that $v\not\equiv 0$. By contradiction, suppose $v\equiv 0$. Thus, taking into account \eqref{weak-3} and \eqref{emb-rabo}, we can apply the same argument in the proof of Step~3, Section~\ref{sec3} (cf.\eqref{concentrate-completed}) to show that
\begin{equation}\label{cI-cpt}
\int_{r_0}^{1}r^{N-k}|v^{\prime}_n|^{k+1}\ud r\rightarrow 0, \;\;\forall r_0>0.
\end{equation}
Define
\begin{equation}
I_0(w)=\frac{1}{k+1}\int_{0}^{1}r^{N-k}|w^{\prime}|^{k+1}\ud r-\frac{\tau}{k^*}\int_{0}^{1}r^{N-1}(w^{+})^{k^{*}}\ud r:\;\;\; X_1\rightarrow\mathbb{R}.
\end{equation}
We claim that
\begin{equation}\label{II0}
I(v_n)=I_0(v_n)+o(1).
\end{equation}
First, for any $r_0\in (0,1)$, we write
\begin{equation}\label{I-I0}
\begin{aligned}
|I(v_n)-I_0(v_n)|&=\left|\frac{\tau}{k^*}\int_{0}^{1}r^{N-1}(v^{+}_{n})^{k^{*}}\ud r-\tau\int_{0}^{1}\frac{r^{N-1}}{k^*+r^{\alpha}}(v^{+}_{n})^{k^{*}+r^{\alpha}}\ud r\right|\\
& \le C_1\left|\int_{0}^{r_0}r^{N-1}(v^{+}_{n})^{k^{*}}((v^{+}_{n})^{r^{\alpha}}-1)\ud r\right|+C_2\left|\int_{r_0}^{1}r^{N-1}(v^{+}_{n})^{k^{*}}((v^{+}_{n})^{r^{\alpha}}-1)\ud r\right|.
\end{aligned}
\end{equation}
Now, from \eqref{cI-cpt}, arguing as in \eqref{r-concentrating-1}, we can see that
\begin{equation}\nonumber
\begin{aligned}
&\int_{r_0}^{1}r^{N-1}(v^{+}_n)^{k^*+r^{\alpha}}\ud r=o(1)\\
&\int_{r_0}^{1}r^{N-1}(v^{+}_n)^{k^*}\ud r=o(1).
\end{aligned}
\end{equation}
It  follows that
\begin{equation}\label{I-I0partI}
\int_{r_0}^{1}r^{N-1}(v^{+}_{n})^{k^{*}}((v^{+}_{n})^{r^{\alpha}}-1)\ud r=o(1).
\end{equation}
Further, as in \eqref{r-contratating-0}, we can write 
\begin{equation}\nonumber
\begin{aligned}
&\int_{\left\{v_n\ge1\right\}\cap(0,r_0)}r^{N-1}(v^{+}_n)^{k^*}((v^{+}_n)^{r^{\alpha}}-1)\ud r=o(1)
\end{aligned}
\end{equation}
for all $r_0>0$ small enough. Consequently
\begin{equation}\label{I-I0partII}
\begin{aligned}
\left|\int_{0}^{r_0}r^{N-1}(v^{+}_{n})^{k^{*}}((v^{+}_{n})^{r^{\alpha}}-1)\ud r\right|& \le \int_{\left\{v_n<1\right\}\cap(0,r_0)}r^{N-1}(v^{+}_n)^{k^*}(1-(v^{+}_n)^{r^{\alpha}})\ud r+o(1)\\
&\le\int_{\left\{v_n<1\right\}\cap(0,r_0)}r^{N-1}\ud r+o(1)\\
& \le \frac{r^{N}_0}{N}+o(1)
\end{aligned}
\end{equation}
for $r_0>0$ small enough. Hence, \eqref{II0} follows from \eqref{I-I0}, \eqref{I-I0partI} and \eqref{I-I0partII}. Similarly, one has
\begin{equation}\label{II0}
I^{\prime}(v_n)\varphi=I^{\prime}_0(v_n)\varphi+o(1), \;\; \varphi\in X_1.
\end{equation}
Therefore, $(v_n)$ is a Palais-Smale sequence to the functional $I_0$. But, from the same arguments in \cite{CCM02017}, we can show that $I_0$ satisfies Palais-Smale condition for any
$$
0<c<\left(\frac{1}{k+1}-\frac{1}{k^*}\right)\left(\frac{\mathrm{C}^{k}_{N}}{N}\right)^{\frac{k+1}{k^*-k-1}}S^{\frac{N}{2k}}.
$$
Hence, we can assume that $v_n\rightarrow 0$ strongly in $X_1$ and thus $I(v_n)\rightarrow 0$ which contradicts $I(v_n)\rightarrow c_{MP}>0$.
\end{proof}
\begin{lemma} Let $v\in X_1$ be a weak solution for equation \eqref{ourpradial}. Then $v\in C^{2}[0,1]$ and $u(x)=-v(|x|),$ $x\in B$ is a radially symmetric $k$-admissible function which solves the equation \eqref{ourp}. 
\end{lemma}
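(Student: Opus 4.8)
The plan is to reproduce, with the obvious modifications, the regularity and admissibility analysis already carried out in Section~\ref{sec4} for the maximizer, since the weak equation \eqref{weak-equation} satisfied here differs from \eqref{weaksolution} only in the constant $\tau$ in place of $\lambda$ and in the absence of the factor $(k^{*}+r^{\alpha})$ — neither of which affects any step. First I would show that $v$ is bounded, i.e. $\sup_{r\in(0,1]}v(r)<\infty$, by the De Giorgi--Nash--Moser iteration of Lemma~\ref{Lemma-regular}: test \eqref{weak-equation} with $h(r)=\int_{1}^{1+v^{+}}|H'(t)|^{k+1}\ud t$ for the usual truncations $H(t)=t^{\sigma}-1$ made linear past $L$, use Lemma~\ref{super-bounded} to gain the extra integrability $\int_{0}^{1}r^{N-1}|v|^{k^{*}+\frac{p}{p-k-1}r^{\alpha}}\ud r<\infty$ for some $p\in(k+1,k^{*})$ with $(k^{*}-k-2)p<k^{*}(p-k-1)$, and iterate with the embedding \eqref{Ebeddings}. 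Since $v>0$ in $(0,1)$ (as noted right after \eqref{integral equation- II}), one has $v^{+}=v$ and the bookkeeping simplifies accordingly.

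Next, from the pointwise identity \eqref{integral equation- II} I would write $-v'(r)=[I(r)]^{1/k}$ with $I(r)=\tau r^{-(N-k)}\int_{0}^{r}s^{N-1}v^{k^{*}+s^{\alpha}-1}\ud s$, which already yields $v\in C^{2}(0,1]$. The boundedness of $v$ forces $\lim_{r\to0^{+}}I(r)=0$, hence $v'(0)=0$ and $v\in C^{1}[0,1]$. To upgrade this I would compute, exactly as in \eqref{chiI}--\eqref{bi-grad},
$$
\frac{I'(r)}{I(r)}=-\frac{N-k}{r}+\frac{r^{N-1}v^{k^{*}+r^{\alpha}-1}}{\int_{0}^{r}s^{N-1}v^{k^{*}+s^{\alpha}-1}\ud s},\qquad -v''(r)=-\frac{v'(r)}{k}\,\frac{I'(r)}{I(r)},
$$
and then combine $\displaystyle\lim_{r\to0^{+}}\bigl(-v'(r)/r\bigr)=\bigl(\tfrac{\tau}{N}v(0)^{k^{*}-1}\bigr)^{1/k}>0$ with the rewriting $\frac{r^{N-1}v^{k^{*}+r^{\alpha}-1}v'}{\int_{0}^{r}s^{N-1}v^{k^{*}+s^{\alpha}-1}\ud s}=-\tau\bigl(-r/v'\bigr)^{k-1}v^{k^{*}+r^{\alpha}-1}$, exactly as in \eqref{frac-frac}--\eqref{frac-cabu}, to conclude that $\lim_{r\to0^{+}}v''(r)$ exists, whence $v\in C^{2}[0,1]$. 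I expect this to be the main obstacle: each of the two terms making up $-v''$ is individually singular at the origin, and the content of the argument is that their sum is not.

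Finally, to get $u\in\Phi^{k}_{0,\mathrm{rad}}(B)$ I would verify $F_{j}[u]\ge0$ in $B$ for $1\le j\le k$, which via the radial $k$-Hessian formula (cf. \eqref{Kourpradial}) reduces to $\bigl(r^{N-j}(-v')^{j}\bigr)'\ge0$ on $(0,1]$. Using $-v'=[I]^{1/k}$ and \eqref{chiI} one has
$$
\bigl(r^{N-j}(-v')^{j}\bigr)'=r^{N-j}[I(r)]^{\frac{j}{k}}\left(\frac{N-k}{r}\Bigl[\tfrac{N-j}{N-k}-\tfrac{j}{k}\Bigr]+\frac{j}{k}\,\frac{r^{N-1}v^{k^{*}+r^{\alpha}-1}}{\int_{0}^{r}s^{N-1}v^{k^{*}+s^{\alpha}-1}\ud s}\right)\ge0,
$$
since $\tfrac{N-j}{N-k}\ge\tfrac{j}{k}$ for $1\le j\le k$ and the last term is nonnegative. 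Together with $F_{k}[u]=(-u)^{k^{*}+|x|^{\alpha}-1}$, $u<0$ in $B$ (from $v>0$ in $(0,1)$) and $u=0$ on $\partial B$ (from $v(1)=0$), this shows $u(x)=-v(|x|)$ is the desired radially symmetric $k$-admissible solution of \eqref{ourp}, completing the proof of Theorem~\ref{thm3}.
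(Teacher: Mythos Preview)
Your proposal is correct and follows essentially the same approach as the paper: boundedness via the De Giorgi--Nash--Moser iteration of Lemma~\ref{Lemma-regular}, then $C^{2}$-regularity from the integral representation \eqref{integral equation- II} together with the identities \eqref{chiI}--\eqref{frac-cabu}, and finally $k$-admissibility from the computation leading to \eqref{v-kadmi}. The paper's own proof is in fact more terse, simply invoking these earlier arguments by reference rather than spelling them out again.
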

\begin{proof}
Since $v$ satisfies \eqref{weak-equation}, we can proceed analogously to the prove of Lemma~\ref{Lemma-regular} to get $v$ bounded in $(0,1]$. From \eqref{integral equation- II}, we can write
\begin{equation}\label{e-integral}
(-v^{\prime})^{k}=\frac{\tau}{r^{N-k}}\int_{0}^{r} s^ {N-1}(v^{+})^{k^{*}+s^{\alpha}-1}\ud s.
\end{equation}
Thus, $v\in C^{2}(0,1)$. In addition, since $v$ is bounded in $(0,1]$,  \eqref{e-integral} yields
$$
\lim_{r\rightarrow 0^{+}}v^{\prime}(r)=0.
$$
Hence $v\in C^{1}[0,1]$. Analogously to \eqref{bi-grad} we can write
\begin{equation} \label{Bbi-grad}
\begin{aligned}
v^{\prime\prime}(r) &=\frac{v^{\prime}(r)}{k}\left[-\frac{N-k}{r}+\frac{r^{N-1}|v|^{k^*+r^{\alpha}-1}}{\int_{0}^{r}s^{N-1}|v|^{k^*+s^{\alpha}-1}\ud s}\right],\;\; \forall\; r\in [0,1).
\end{aligned}
\end{equation} 
It follows that $\lim_{r\rightarrow 0^{+}}v^{\prime\prime}(r)$ exists and finally that $v\in C^{2}[0,1]$. Now,  analysis similar to that in the proof of \eqref{v-kadmi} shows $F_{j}(u)\ge 0$ in $B$,  $\forall\; 1\le j\le k$, which ensures that $u$ is a $k$-admissible function. Finally, from \eqref{e-integral}, the function $w=-v$ belongs to $C^{2}[0,1]$ and solves the equation \eqref{Kourpradial}, and consequently $u(x)=w(|x|),$ $x\in B$ solves \eqref{ourp}.
\end{proof}

\end{document}